\DeclareMathOperator{\im}{im}
\newtheorem{thm}{Theorem}[section]
\newtheorem{cor}[thm]{Corollary}
\newtheorem{lem}[thm]{Lemma}
\newtheorem{prop}[thm]{Proposition}
\newtheorem{question}[thm]{Question}
\theoremstyle{definition}
\newtheorem{defn}[thm]{Definition}
\newtheorem{rem}[thm]{Remark}
\newtheorem{rems}[thm]{Remarks}
\numberwithin{equation}{thm}
\newtheorem{conven}[thm]{Convention}
\numberwithin{equation}{section}
\def\ggg{\mathfrak{g}}
\def\sss{\mathfrak{s}}
\def\lll{\mathfrak{l}}
\def\zzz{\mathfrak{z}}
\def\ppp{\mathfrak{p}}
\def\nnn{\mathfrak{n}}
\def\hhh{\mathfrak{h}}
\def\gl{\mathfrak{g}}
\def\ker{{\rm Ker\,}}
\def\im{{\rm Im\,}}
\def\Hom{{\rm Hom}}
\def\Ext{{\rm Ext}}
\def\bba{\mathbb A}
\def\bbf{\mathbb F}
\def\bbz{\mathbb Z}
\def\co{\mathcal O}
\def\calr{\mathcal R}
\def\cv{\mathcal{V}}
\def\calv{\mathcal{V}}
\def\cala{\mathcal{A}}
\def\sfp{\textsf{P}}
\def\sfd{\textsf{d}}
\def\bfC{\textbf{C}}
\def\bfV{\textbf{V}}
\def\bfH{\textbf{H}}
\def\scrl{\mathscr{L}}
	\def\lc{\mathscr{L\kern -.4em   C}}
\def\ulc{{{}^u{\kern -.3em}\mathscr{L\kern -.4em C}}}
\def\uE	{{{}^u{\kern -.1em}\mathcal{E}}}
\def\hmod{\text{-}\textbf{mod}}
\begin{document}



\title{The Category $\mathcal{O}$  for Lie algebras of vector fields (II): Lie-Cartan modules and cohomology}

\TitleHead{Lie-Cartan modules and cohomology}



\emsauthor*{1}{
	\givenname{Feifei}
	\surname{Duan}
	\mrid{1234567}
	\orcid{0000-0001-0002-0003}}{Feifei~Duan}
\emsauthor{2}{
	\givenname{Bin}
	\surname{Shu}
	\mrid{}
	\orcid{}}{Bin Shu}
\emsauthor{3}{
	\givenname{Yufeng}
	\surname{Yao}
	\mrid{}
	\orcid{}}{Yufeng~Yao}
\emsauthor{4}{
	\givenname{Priyanshu}
	\surname{Chakraborty}
	\mrid{}
	\orcid{}}{Priyanshu~Chakraborty}

\Emsaffil{1}{
	\pretext{}
	\department{School of Mathematical Sciences \& Hebei Key Laboratory of Computational Mathematics and Applications}
	\organisation{Hebei Normal University}
	\rorid{01a2bcd34}
	\address{No. 20 Nanhuan East Rd}
	\zip{050024}
	\city{Shijiazhuang}
	\country{China}
	\posttext{}
	\affemail{duanfeifei0918@126.com}
	\furtheremail{}}
\Emsaffil{2}{
	\pretext{}
	\department{1}{School of Mathematical Sciences,Ministry of Education Key Laboratory of Mathematics and Engineering Applications \& Shanghai Key Laboratory of PMMP}
	\organisation{1}{East China Normal University}
	\rorid{1}{}
	\address{1}{No. 500 Dongchuan Rd.}
	\zip{1}{200241}
	\city{1}{Shanghai}
	\country{1}{China}
	\posttext{}
	\pretext{}
	\department{2}{} 
	\organisation{2}{}%
	\rorid{2}{}
	\address{2}{}%
	\zip{2}{}
	\city{2}{}
	\country{2}{} 
	\posttext{}
	\affemail{bshu@math.ecnu.edu.cn}
	\furtheremail{}}
\Emsaffil{3}{
	\pretext{}
	\department{Department of Mathematics}
	\organisation{Shanghai Maritime University}
	\rorid{}
	\address{No. 1550 Haigang Ave.}
	\zip{201306}
	\city{Shanghai}
	\country{China}
	\posttext{}
	\affemail{yfyao@shmtu.edu.cn}
	\furtheremail{}}

\Emsaffil{4}{
	\pretext{}
	\department{School of Mathematical Sciences, Ministry of Education Key Laboratory of Mathematics and Engineering Applications \& Shanghai Key Laboratory of PMMP}
	\organisation{East China Normal University}
	\rorid{}
	\address{No. 500 Dongchuan Rd.}
	\zip{200241}
	\city{Shanghai}
	\country{China}
	\posttext{}
	\affemail{priyanshu@math.ecnu.edu.cn}
	\furtheremail{}}
	

\classification[]{17B10, 17B66, 17B70, 18G10}

\keywords{Lie algebras of vector fields, Lie-Cartan modules, \texorpdfstring{$\ulc$}{}-cohomology}
\maketitle

\begin{abstract}
As a sequel to \cite{DSY},  we introduce here a category \texorpdfstring{$\lc$ }{} arising from the BGG category \texorpdfstring{$\co$ }{} defined in \cite{DSY} for Lie algebras of polynomial vector fields.  The objects of \texorpdfstring{$\lc$ }{}  are so-called Lie-Cartan modules which admit both Lie-module structure and compatible \texorpdfstring{$\calr$}{}-module structure (\texorpdfstring{$\calr$}{} denotes the corresponding polynomial ring). This terminology is natural, coming from affine connections in differential geometry through which the structure sheaves in topology and the vector fields in geometry are integrated for differential manifolds.
			
In this paper, we study Lie-Cartan modules and their categorical  and cohomology properties. 
The category $\lc$ is abelian, and  a ``highest weight category" with depths. Notably, the set of co-standard objects in the category $\mathcal{O}$
turns out to represent the isomorphism classes of simple objects of
\texorpdfstring{$\lc$.} {}
We then establish the cohomology for the category of universal Lie-Cartan modules (called the $\ulc$-cohomology), extending Chevalley-Eilenberg cohomology theory. Another notable  result says that in the fundamental case $\ggg= W(n)$,  the extension ring $\Ext^\bullet_\ulc(\calr,\calr)$ for the polynomial algebra $\calr$ in the $\ulc$-cohomology is isomorphic to the usual cohomology ring  $H^\bullet(\frak{gl}(n))$ of the general linear Lie algebra $\mathfrak{gl}(n)$.
\end{abstract}


\section{Introduction}

\subsection{Background and Motivation}	This paper is one of series of work on representations for Lie algebras of vector fields $\ggg\in \{W(n)$, $S(n)$, $H(n)\}$.  In the previous one \cite{DSY}, the first three authors  introduced a parabolic BGG category and studied indecomposable tilting modules and their character formulas for $\ggg$. The purpose of the present paper is twofold. One is to introduce Lie-Cartan modules and classify all irreducible Lie-Cartan modules. The other one is to develop the cohomology theory for (universal) Lie-Cartan modules and make its description and some important computations.

 Let us first recall some background. Associated with an affine space $\textsf{E}$, the Lie algebras of vector fields on $\textsf{E}$ are basic algebraic objects. When considering the fundamental case $\textsf{E}=\bba^n$, we have the Lie algebras of vector fields $W(n)$, $S(n)$, $H(n)$ and $K(n)$. Those Lie algebras are  involved in the classification of transitive Lie pseudogroup raised by E. Cartan (cf. \cite{GS}, \cite{Ko}, \cite{KoN}, \cite{SS}, etc.), and also involved in the classification of finite dimensional simple Lie algebras over an algebraically closed filed of prime characteristic (cf. \cite{Kac} and \cite{KS}, etc.).
		
Recall that the infinite dimensional Lie algebra $\ggg=X(n)$, $X\in\{W,S,H\}$, is endowed with a canonical graded structure
		$$\ggg=\sum_{i=-1}^\infty \ggg_{i}$$
		arising from the grading of polynomials from  $\calr:=\bbf[x_1,\cdots,x_n]$, the coordinate ring of $\bba^n$. As homogeneous spaces, $\ggg_{{-1}}$ is spanned by all partial derivations $\partial_i$, $i=1,\cdots,n$, and $\ggg_{0}$ is isomorphic to $\mathfrak{gl}(n)$, $\mathfrak{sl}(n)$ or $\mathfrak{sp}(n)$, containing a canonical maximal torus $\hhh$. We consider the subalgebra $\sfp=\ggg_{{-1}}\oplus \ggg_{0}$ which can be regarded as a parabolic subalgebra. Associated with $\sfp$, we introduce a subcategory $\co$ of $\ggg$-module category, an analogue of the parabolic BGG category over complex semi-simple Lie algebras, whose objects satisfy the axioms (see Definition \ref{category O}) including local finiteness over $U(\sfp)$. Such an option concerning $\sfp$  enables us to get good understanding on representations for $\ggg$ (see \cite{DSY}). In the same spirit, we also studied the representations of finite-dimensional Lie superalgebras in \cite{DSY2} where we made an exhaustive and essential explanations on the option of $\sfp$.

\subsection{Lie-Cartan modules} 
In the present paper, we first introduce Lie-Cartan modules for $\ggg=W(n), S(n)$ and $H(n)$
(see Definition \ref{def: lc mod}). Generally speaking, for a Lie algebra $\mathcal{G}$ of vector fields on an algebraic vareity $\textsf{E}$,
 we can consider a kind of modules which are endowed with two module structures over $\mathcal{G}$ and over $\calr$ where $\calr$ means the ring of regular functions on $\textsf{E}$, with compatibility of both modules. This thinking becomes a kind of usual ways, for example, such as affine (integrable)  connections in differential geometry, differential operator rings and related fields  (see \cite{Bj},  \cite{Hel}, \cite{HTT}, \cite{Skr},   {\sl{etc}.}).  In particular,  S. Skryabin introduced the notion Lie-Cartan pairs in \cite{Skr2}. Under the framework of Lie-Cartan pairs, he successfully studied  $\textbf{k}$-forms of a finite-dimensional non-classical simple Lie algebra over an algebraical closure $\overline{\textbf{k}}$ of $\textbf{k}$ where $\textbf{k}$ is an algebraically non-closed field of  prime characteristic. In this way, we  go further in our scope, introducing the category of Lie-Cartan modules as a subcategory of $\co$ whose objects are simultaneously $\calr$-modules satisfying the compatibility $[\rho(\xi),\theta(f)]=\theta(\xi(f))$ for $\ggg$-action $\rho$ and $\calr$-action $\theta$ (see Definition \ref{def: lc mod}). It should be mentioned that after finishing the manuscript we were aware of the works \cite{BFN}, \cite{BIN} by Billig-Futorny-Nilsson,  and by  Billig-Ingalls-Nasr  respectively,  where the authors studied $\mathcal{AV}$ modules in the same spirit as above.
 Roughly speaking, Lie-Cartan modules can be  regarded as  $\mathcal{AV}$-modules in the category $\mathcal{O}$ which parallels  to the BGG category of complex semisimple Lie algebras.

 The most important examples of Lie-Cartan modules appearing in \cite{DSY} are the co-induced modules $\text{Coind}^{U(\ggg)}_{U(\ggg_{\geq0})}(L^0(\lambda))$ with $L^0(\lambda)$ running finite-dimensional irreducible representations of $\ggg_0$ (with trivial action of  $\ggg_{>0}$), or to say, with $\lambda$ running over $\Lambda^+,$ the set of dominant integral weights of $\hhh$. These modules are also regarded as co-standard modules with respect to tilting module theory (or with respect to the category theory of  highest weight modules, see \cite{Hum}), which can be realized as $\cv(\lambda)$ by prolongation (see \S\ref{sec: prolong real} for the notation, and see \cite{Rud, Rud2, Shen1, Shen,  Skr} for more explanations). We thoroughly investigate the category of Lie-Cartan modules, and finally accomplish the classification of all simple objects. It shows that the co-standard modules are not only irreducible Lie-Cartan modules, but also present all isomorphism classes of irreducible Lie-Cartan modules up to depths. This is the first main result of the paper. Precisely, we have

\begin{thm} The set $\{{}^{d}\hspace{-2pt}\cv(\lambda)\mid \lambda\in {\Lambda^+}, d\in\bbz\}$ exhausts all non-isomorphic irreducible  Lie-Cartan modules.
\end{thm}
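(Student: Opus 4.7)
The plan is to establish the classification in two directions: (a) each ${}^{d}\hspace{-2pt}\cv(\lambda)$ is an irreducible Lie-Cartan module, and (b) every irreducible Lie-Cartan module arises this way, up to depth shift.

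For (a), I would use the prolongation realization of $\cv(\lambda)$ as the $\calr$-module $\calr\otimes L^0(\lambda)$ carrying a compatible $\ggg$-action, with the superscript ${}^{d}$ indicating a degree shift of the natural $\bbz$-grading. Irreducibility as a \emph{Lie-Cartan} module (stronger than irreducibility as a $\ggg$-module) follows because $L^0(\lambda)$ is $\ggg_0$-irreducible and generates all of $\cv(\lambda)$ under the $\calr$-action, while any nonzero Lie-Cartan submodule must reach the lowest-depth slice by repeated application of $\partial_i \in \ggg_{-1}$ and must intersect $1\otimes L^0(\lambda)$ nontrivially by $\ggg_0$-irreducibility.

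For (b), let $M$ be an irreducible Lie-Cartan module. The compatibility $[\partial_i,x_j]=\delta_{ij}$ together with the $\hhh$-weight grading yields a natural $\bbz$-grading $M=\bigoplus_{d}M_{d}$ on which $\calr$ raises depth by one, $\ggg_{-1}$ lowers it by one, and $\ggg_i$ shifts it by $i$. Using the local $U(\sfp)$-finiteness coming from $M\in\co$ together with Lie-Cartan irreducibility, I would extract a lowest nonzero slice $M_{d_0}$, which is automatically annihilated by $\ggg_{-1}$. The irreducibility condition should then force $M_{d_0}$ to be a single finite-dimensional irreducible $\ggg_0$-module $L^0(\lambda)$ on which $\ggg_{>0}$ acts trivially, since otherwise a proper $\ggg_0$-submodule $V\subsetneq M_{d_0}$ (resp.\ a proper $\ggg_0$-complement to the image of $\ggg_{>0}$) would generate, via $\calr\cdot U(\ggg)V$, a proper Lie-Cartan submodule of $M$, whose lowest slice is still $V\subsetneq M_{d_0}$. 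Feeding $L^0(\lambda)\hookrightarrow M$ through the universal property of co-induction from $U(\ggg_{\geq 0})$ to $U(\ggg)$ then produces a nonzero Lie-Cartan morphism ${}^{d_0}\hspace{-2pt}\cv(\lambda)\to M$, which is an isomorphism by irreducibility of both sides. Distinctness of ${}^{d}\hspace{-2pt}\cv(\lambda)$ for different pairs $(d,\lambda)$ is immediate by comparing lowest graded pieces as $\ggg_0$-modules.

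The main technical obstacle will be the analysis of the bottom slice $M_{d_0}$: proving that it is finite-dimensional, $\ggg_0$-irreducible, and annihilated by $\ggg_{>0}$. A priori $M_{d_0}$ is only a $\ggg_{\geq 0}$-module and could be reducible or infinite-dimensional. The key will be to exploit local $U(\sfp)$-finiteness to control dimension, and to exploit the interplay between the $\calr$-action and the $U(\ggg)$-action (specifically the commutator $[\xi,f\cdot]=\xi(f)\cdot$) to ensure that $\calr\cdot U(\ggg)V$ really stays in a single lowest slice equal to $V$, rather than filling out $M_{d_0}$. Once this structural statement on $M_{d_0}$ is secured, the rest of the argument reduces to a routine application of the co-induction functor and Schur-type irreducibility.
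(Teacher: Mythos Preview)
Your strategy through identifying the depth slice $M_{d_0}$ as an irreducible $\ggg_0$-module $L^0(\lambda)$ is correct and matches the paper. The gap is in the construction of the comparison morphism. Co-induction $\nabla(\lambda)=\mathcal{H}om_{U(\ggg_{\geq 0})}(U(\ggg),L^0(\lambda))$ is the \emph{right} adjoint to restriction, so its universal property produces $\ggg$-maps \emph{into} $\cv(\lambda)\cong\nabla(\lambda)$, not out of it: from the $\ggg_{\geq 0}$-quotient $M\twoheadrightarrow M/M_{>d_0}\cong L^0(\lambda)$ you get $\phi\colon M\to\cv(\lambda)$, the opposite direction to what you wrote. (Your remark about $\ggg_{>0}$ acting trivially on $M_{d_0}$ only makes sense for this quotient module; as a subspace of $M$ the slice $M_{d_0}$ is not $\ggg_{>0}$-stable, and the inclusion $L^0(\lambda)\hookrightarrow M$ is a $\sfp$-map, which under the left adjoint would give $\Delta(\lambda)\to M$, not $\cv(\lambda)\to M$.)

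More seriously, even with the correct direction, the adjunction only yields a $\ggg$-module map, with no reason a priori to be $\calr$-linear. This is precisely where the paper's real work sits. The paper constructs the $\ggg$-map $\phi\colon M\to L(\lambda)\hookrightarrow\cv(\lambda)$ and then applies an Extension Lemma: because $\phi|_{M_{d_0}}$ is a scalar isomorphism, a compatibility condition on the depth space is satisfied which forces $\phi$ to extend to a Lie-Cartan morphism on all of $M$. An alternative route, also in the paper, is to prove directly that $M=\calr\cdot M_{d_0}$ via an inductive ``integration'' argument with the $\partial_i$ (showing each $M_k\subseteq\calr_{k-d_0}M_{d_0}$), so that $M$ is a free $\calr$-module of the form $\calr\otimes L^0(\lambda)$, and then appeal to a separate uniqueness statement for the Lie-Cartan structure on $\calr\otimes L^0(\lambda)$. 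Your proposal alludes to the commutator $[\partial_i,x_j]=\delta_{ij}$ but carries out neither of these arguments, and the co-induction adjunction by itself cannot substitute for them.
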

This theorem will be presented in Theorem \ref{thm: irre-mod}. There are two ingredients in the proof. One is the introduction of Lie-Cartan radicals for the finite-depth Lie-Cartan modules (any finitely-generated Lie-Cartan modules admit finite-depth) which plays a key role (see \S\ref{sec: radical}). The other one is a nontrivial result (Extension Lemma) which explains how a $\ggg$-module homomorphism  becomes a Lie-Cartan module homomorphism (see Lemma \ref{lem: ext}).

\subsection{Cohomology} The other topic of this paper is to develop the cohomology of universal Lie-Cartan modules. Here  the prefix ``universal" means that the objects may  not necessarily only come from the category $\co$, instead, may be any $U(\ggg)$-modules with compatible $\calr$-module structure (see Definition \ref{defn: univ LC}). This notion of universal Lie-Cartan modules coincides with that of $\mathcal{AV}$-modules in \cite{BFN}. Our notion of Lie-Cartan modules is raised in the same spirit of  Lie-Cartan pairs introduced in \cite[\S 6]{Skr2}. Actually, this spirit can be applied more (see for example \cite{Wa}).  The  category of universal Lie-Cartan modules  is denoted by $\ulc$. {With aid of introduction of the naturalized algebra $\calr\natural U(\ggg)$ combining $U(\ggg)$ and $\calr$ by the following Lie axiom
$$ [X,f]=X(f)$$
for $X\in\ggg$ and $f\in\calr$ (see Definition \ref{def: nat alg}),  $\ulc$  can be identified with the category of $\calr\natural U(\ggg)$-modules  (see Lemma \ref{lem: tensor r ulc}).}
In the concluding section, we introduce the cohomology of the category  $\ulc$, the so-called $\ulc$-cohomology (see Definition \ref{def: ulc coh}). By definition, the $q$th $\ulc$-cohomology  with coefficient in $M\in\ulc$ is  the right derived functor $\textsf{R}^q(\Gamma)(M),$ where $\Gamma=\Hom_\ulc(\calr,-)$ is a left exact functor from $\ulc$ to  the category of $(\ggg,\calr)$-modules. By the notion of the category of $(\ggg,\calr)$-modules it means that the objects admit both $\ggg$- and $\calr$-module structure, not required to be mutually compatible.

By exploiting the ideas of constructing Chevalley-Eilenberg complex, we construct a complex $\bfC(M)$ for $M\in\ulc$ (see \S\ref{sec: extended CE complex}). Then we make a realization of  the $q$th $\ulc$-cohomology $H^q_\ulc(M)$ (Theorem \ref{thm: cohom thm}). The $\ulc$-cohomology  can be regarded as an extension of Chevalley-Eilenberg cohomology. In this part,  we have the following main result.

\begin{thm}
The following statements hold.
\begin{itemize}
\item[(1)] The cohomology modules $H^q_{\ulc}(M)$ for $M\in \ulc$, $q\in \bbz_{\geq0},$ are the cohomology of the cochain complex $\bfC(M),$
which  means
$$H^q_\ulc(M)=H^q(\bfC(M)).$$

\item[(2)] Let $\ggg=W(n)$. Then the extension ring of $\calr$ in $\ulc$-cohomology satisfies
$$\Ext_\ulc^\bullet(\calr,\calr)\cong H^\bullet(\mathfrak{gl}(n,\bbf))$$
as rings, where $H^\bullet(\mathfrak{gl}(n,\bbf))$  denotes the ordinary cohomology of the general linear Lie algebra.
\end{itemize}
\end{thm}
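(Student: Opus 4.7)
My plan is to tackle the two parts in order. For part (1), I would apply the standard comparison of $\delta$-functors. First, check directly from the definitions that $\bfC^0(M)=\Gamma(M)=M^\ggg$, that $\bfC^\bullet$ is a covariant functor in $M$ which is exact on monomorphisms, and that the differential on $\bfC$ squares to zero. Then verify that $\bfC^\bullet$ is a cohomological $\delta$-functor on $\ulc$: a short exact sequence $0\to M'\to M\to M''\to 0$ in $\ulc$ yields a short exact sequence of cochain complexes $0\to\bfC(M')\to\bfC(M)\to\bfC(M'')\to 0$ because each cochain group depends exactly (covariantly) on $M$, and one obtains the connecting homomorphisms and the long exact sequence by the usual snake argument. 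The decisive step is effaceability: for each $M\in\ulc$, construct an embedding $M\hookrightarrow I$ into an object $I$ that is $\bfC$-acyclic in positive degrees, for instance a coinduced injective. Uniqueness of universal $\delta$-functors then gives $H^q(\bfC(M))\cong\textsf{R}^q\Gamma(M)=H^q_\ulc(M)$. Equivalently, one may work on the projective side: using the identification $\ulc\cong(\calr\natural U(\ggg))$-mod from Lemma~\ref{lem: tensor r ulc}, build a Chevalley-Eilenberg-type Koszul resolution of $\calr$ by free $\calr\natural U(\ggg)$-modules indexed by the exterior powers of $\ggg$, and check that applying $\Hom_\ulc(-,M)$ reproduces $\bfC(M)$.

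For part (2), I would apply (1) to reduce the computation to $H^\bullet(\bfC(\calr))$. The central tool is the canonical grading $\ggg=\bigoplus_{i\ge -1}\ggg_i$ with $\ggg_0\cong\mathfrak{gl}(n)$, combined with the polynomial grading of $\calr$. The plan is to filter $\bfC(\calr)$ by total degree and analyse the resulting spectral sequence. The crucial mechanism is that $\ggg_{-1}$ acts on $\calr$ by the partial derivatives $\partial_1,\dots,\partial_n$, which together with multiplication by the $x_i$'s provide a Koszul-type contraction on the positive-degree part of $\calr$. This forces all cochains carrying non-trivial positive-degree polynomial coefficients to become coboundaries, so up to quasi-isomorphism $\bfC(\calr)$ reduces to its constant-coefficient, degree-zero piece. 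A further contraction, coming from the fact that $\ggg_{>0}$ acts trivially on the constants while the remaining $\ggg_{-1}$-cochains are killed by the first contraction, leaves precisely $\Hom_\bbf(\Lambda^\bullet\mathfrak{gl}(n),\bbf)$ with the usual Chevalley-Eilenberg differential, hence $H^\bullet(\mathfrak{gl}(n,\bbf))$. More concretely, one can exhibit a direct cochain map $C^\bullet(\mathfrak{gl}(n),\bbf)\to\bfC(\calr)$ via restriction along $\mathfrak{gl}(n)=\ggg_0\hookrightarrow\ggg$ followed by evaluation of $\calr$ at the origin, and construct an explicit homotopy, built from the grading, witnessing its being a quasi-isomorphism.

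The main obstacle is upgrading this additive isomorphism to a \emph{ring} isomorphism. The Yoneda product on $\Ext^\bullet_\ulc(\calr,\calr)$ transports, via the Koszul resolution, to a shuffle or cup product on $\bfC(\calr)$; one must verify that this product matches the standard cup product on $C^\bullet(\mathfrak{gl}(n),\bbf)$ under the quasi-isomorphism above. This requires choosing a strictly multiplicative cochain representative of the quasi-isomorphism, which is the most delicate point. A secondary, more technical issue is the convergence of the spectral sequence given that $\ggg$ is infinite-dimensional; this is handled by restricting to each finite-dimensional $\hhh$-weight space, on which the filtration is bounded and convergence is automatic.
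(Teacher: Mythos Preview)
For part (1), your ``equivalently'' alternative---building a Chevalley--Eilenberg--type free resolution of $\calr$ over $\calr\natural U(\ggg)$ indexed by $\bigwedge^q\ggg$ and then applying $\Hom_{\ulc}(-,M)$---is precisely what the paper does (Proposition~\ref{prop: proj resol} and Theorem~\ref{thm: cohom thm}). Your primary effaceability argument is a legitimate alternative route, but note a slip: $\bfC^0(M)=\Hom_\calr(\calr,M)\cong M$, not $M^\ggg$; what you want is $H^0(\bfC(M))\cong M^\ggg\cong\Gamma(M)$, which follows from computing $\ker\sfd_0$.

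For part (2), your plan diverges from the paper, and the divergence matters. The paper first identifies $\Ext^\bullet_{\ulc}(\calr,\calr)\cong H^\bullet(\ggg,\calr)$ (your starting point too), then observes $\calr\cong\nabla(0)=\Hom_{U(\ggg_{\geq 0})}(U(\ggg),\bbf)$ and applies Shapiro's lemma to get $H^\bullet(\ggg,\calr)\cong H^\bullet(\ggg_{\geq 0},\bbf)$, and finally runs the Hochschild--Serre spectral sequence for the ideal $\ggg_{>0}\lhd\ggg_{\geq 0}$, invoking the Gelfand--Fuks computation (Fuks, Theorem 2.2.8) to conclude $H^\bullet(\ggg_{\geq 0},\bbf)\cong H^\bullet(\mathfrak{gl}(n))$. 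Your Koszul contraction using $\partial_i$ and $x_i$ is, when done carefully, a hands-on proof of the Shapiro step, but be careful: it does \emph{not} reduce $\bfC(\calr)$ to $\Hom_\bbf(\bigwedge^\bullet\ggg,\bbf)$ (indeed $H^\bullet(W(n),\bbf)\not\cong H^\bullet(\mathfrak{gl}(n))$ already for $n=1$); rather it simultaneously kills the positive-degree coefficients \emph{and} the $\ggg_{-1}$-slots, landing in $\Hom_\bbf(\bigwedge^\bullet\ggg_{\geq 0},\bbf)$.

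The real gap is your ``further contraction''. The assertion that $\ggg_{>0}$ acting trivially on constants lets you contract away the $\ggg_{>0}$-cochains is not justified: positively graded Lie algebras can have abundant cohomology, and the vanishing of $H^{>0}(\ggg_{>0},\bbf)^{\mathfrak{gl}(n)}$ together with the degeneration of the Hochschild--Serre spectral sequence is exactly the nontrivial content of the Gelfand--Fuks theorem that the paper cites rather than reproves. Your sketch does not supply a mechanism for this step. On the other hand, your caution about the ring structure is well placed: the paper asserts the isomorphism ``as rings'' but its proof, which simply combines Propositions~\ref{prop: 5.8} and~\ref{thm: G-F}, only establishes the isomorphism additively; the multiplicative compatibility is left implicit.
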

The above theorem will be presented in Theorems \ref{thm: cohom thm}
and \ref{thm: third main thm} respectively.


\section{Preliminaries}
In this paper, we always assume that the ground field $\mathbb{F}$ is algebraically closed, and of characteristic $0$. All vector spaces (modules) are over $\mathbb{F}$.
		
\subsection{The Lie algebras of vector fields \texorpdfstring{$W(n)$, $S(n)$}{} and \texorpdfstring{$H(n)$}{}}\label{CartanType}
Let $n$ be a positive integer, and $\calr=\mathbb{F}[x_1,\cdots, x_n]$ be the polynomial algebra of $n$ indeterminants. Then $\calr$ admits a natural grading via the degree: $\calr=\sum_{i\geq 0}\calr_i$ with $\calr_0=\bbf$ and $\calr_i$ consists of all homogeneous polynomials of degree $i$ for $i>0$.
		
		Denote by $W(n)$ the Lie algebra of all  derivations  on $\calr$. Then $W(n)$ is a free $\calr$-module with basis $\{\partial_i\mid 1\leq i\leq n\}$, where $\partial_i$ is the partial derivation with respect to $x_i$, i.e., $\partial_i(x_j)=\delta_{ij}$ for $1\leq i,j\leq n$. The natural $\mathbb{Z}$-grading on $\calr$ induces the corresponding $\mathbb{Z}$-grading on $W(n)$, i.e., $W(n)=\bigoplus\limits_{i=-1}^{\infty}W(n)_{i}$, where $W(n)_{i}=\text{span}_{\mathbb{F}}\{f_j\partial_j\mid f_j\in \calr, \text{deg}(f_j)=i+1,1\leq j\leq n\}.$
		
		The Lie algebra $S(n)$ of special type is a subalgebra of $W(n)$ consisting of vector fields $\sum_i f_i\partial_i$ with zero divergence, i.e., $S(n)=\{\sum_i f_i\partial_i\in W(n)\mid \sum_i\partial_i(f_i)=0\}$. By definition, it is easily seen that $S(n)$ is spanned by those elements $D_{ij}(x^{\alpha})$ with $\alpha=(\alpha(1),\cdots, \alpha(n))\in\mathbb{N}^n$,
		$x^{\alpha}=x_1^{\alpha(1)}\cdots x_n^{\alpha(n)}$, and $1\leq i<j\leq n$, where $D_{ij}: \calr\longrightarrow \calr$ is the linear mapping defined by $D_{ij}(x^{\alpha})=
		\alpha_jx^{\alpha-\epsilon_j}
		\partial_i-\alpha_ix^{\alpha-\epsilon_i}\partial_j,\,\forall\,\alpha\in\mathbb{N}^n$,  with $\epsilon_k:=(\delta_{1k},\cdots,\delta_{nk})$ for $k=1,\cdots,n$,  and $\delta_{st}=1$ if $s=t$, or $0$ otherwise. Since the divergence operator is a homogeneous operator of degree $-2$, the algebra $S(n)$ inherits the $\mathbb{Z}$-grading of $W(n)$. {\sl{Hereafter, we abuse the notation $x^\alpha$ for $\alpha\in \bbz^n$, by making the convention that $x^\alpha=0$ unless $\alpha\in \mathbb{N}^n$.}}
		
		When $n=2r$ is even, the elements in $W(n)$ that annihilate the 2-form $\sum_{i=1}^r dx_i\wedge dx_{i+r}$ are called Hamiltonian.  The Lie algebra $H(n)$ of Hamiltonian type is a subalgebra of $W(n)$ consisting of all Hamiltonian elements in $W(n)$. By the definition,  $H(n)$ has a canonical basis $\{D_H(x^{\alpha})\mid \alpha\in\mathbb{N}^n\setminus\{(0,\cdots, 0)\}\}$, where $D_H: \calr\longrightarrow \calr$ is a linear mapping defined by $D_H(x^{\alpha})=\sum\limits_{i=1}^n\sigma(i)\partial_i(x^{\alpha})\partial_{i^{\prime}}$ with
		\begin{equation*}
			\sigma(i)=\begin{cases}
				1, &\text{if}\,\, 1\leq i\leq r,\cr
				-1, &\text{if}\,\, r+1\leq i\leq n,
			\end{cases}
		\end{equation*}
		and
		\begin{equation*}
			i^{\prime}=\begin{cases}
				i+r, &\text{if}\,\, 1\leq i\leq r,\cr
				i-r, &\text{if}\,\, r+1\leq i\leq n.
			\end{cases}
		\end{equation*}
		Since the 2-form  $\sum_{i=1}^r dx_i\wedge dx_{i+r}$ can be regarded as an operator of degree 2, the algebra $H(n)$ inherits the $\mathbb{Z}$-grading of $W(n)$.
		
		In the following, let $\ggg=X(n)$, $X\in\{W,S,H\}$. Then $\ggg$ has a $\mathbb{Z}$-grading $\ggg=\bigoplus\limits_{i=-1}^{\infty}\ggg_{i}$, where $\ggg_{ i}=\ggg\cap W(n)_{i}$ for $i\geq -1$. Let $\ggg_{\geq i}=\bigoplus_{j\geq i}\ggg_{j}$. We then have the following $\mathbb{Z}$-filtration of $\ggg$: $$\ggg=\ggg_{\geq-1}\supset \ggg_{\geq0}\supset\ggg_{\geq1}\cdots.$$
		It should be noted that
		\begin{equation}\label{grading 0}
			\ggg_{0}\cong\begin{cases}
				\ggg\lll(n),&\text{if}\,\, \ggg=W(n),\cr
				\sss\lll(n),&\text{if}\,\, \ggg=S(n),\cr
				\sss\ppp(n),&\text{if}\,\, \ggg=H(n).
			\end{cases}
		\end{equation}
		We have a triangular decomposition $\ggg_{0}=\nnn^-\oplus\hhh\oplus\nnn^+$, where
		\begin{equation*}
			\nnn^{-}=\begin{cases}
				\text{span}_{\mathbb{F}}\{x_i\partial_j\mid 1\leq j<i\leq n\}, &\text{if}\,\, \ggg=W(n), S(n),\cr
				\text{span}_{\mathbb{F}}\{x_i\partial_j-x_{j+r}\partial_{i+r}, x_{s+r}\partial_{t}+x_{t+r}\partial_{s}\mid \cr
				\quad\quad\quad 1\leq j<i\leq r, 1\leq s\leq t\leq r\}, &\text{if}\,\, \ggg=H(2r),
			\end{cases}
		\end{equation*}
		
		\begin{equation*}
			\hhh=\begin{cases}
				\text{span}_{\mathbb{F}}\{x_i\partial_i\mid 1\leq i\leq n\}, &\text{if}\,\, \ggg=W(n),\cr
				\text{span}_{\mathbb{F}}\{x_i\partial_i-x_j\partial_j\mid 1\leq i<j\leq n\}, &\text{if}\,\, \ggg=S(n),\cr
				\text{span}_{\mathbb{F}}\{x_i\partial_i-x_{i+r}\partial_{i+r}\mid 1\leq i\leq r\}, &\text{if}\,\, \ggg=H(2r),
			\end{cases}
		\end{equation*}
		and
		\begin{equation*}
			\nnn^{+}=\begin{cases}
				\text{span}_{\mathbb{F}}\{x_i\partial_j\mid 1\leq i<j\leq n\}, &\text{if}\,\, \ggg=W(n), S(n),\cr
				\text{span}_{\mathbb{F}}\{x_i\partial_j-x_{j+r}\partial_{i+r}, x_s\partial_{t+r}+x_t\partial_{s+r}\mid\cr
				\quad\quad\quad 1\leq i<j\leq r, 1\leq s\leq t\leq r\}, &\text{if}\,\, \ggg=H(2r).
			\end{cases}
		\end{equation*}
		The negative root system associated with $\nnn^-$  is denoted by $\Phi^-$.
		Let $\sfp=\ggg_{\leq 0}:=\ggg_{{-1}}\oplus\ggg_{0}$, and $U(\sfp), U(\ggg)$ be the universal enveloping algebras of $\sfp$ and $\ggg,$ respectively. The $\mathbb{Z}$-grading on $\ggg$ (resp. $\sfp$) induces a natural $\mathbb{Z}$-grading on $U(\ggg)$ (resp. $U(\sfp)$).

\subsection{The category \texorpdfstring{$\mathcal{O}$}{} } 
The following notion is an analogy of the BGG category for complex finite dimensional semi-simple Lie algebras.
		
		\begin{defn}\label{category O}
			Denote by $\mathcal{O}$ the category, { whose objects $M$ are $U(\ggg)$-modules}  with the following three properties satisfied.
			\begin{itemize}
				\item[(1)] $M$ is an admissible $\mathbb{Z}$-graded $\ggg$-module, i.e., $M=\bigoplus\limits_{i\in\mathbb{Z}} M_{i}$ with $\dim M_{i}<+\infty$, and
				$\ggg_{i}M_{j}\subseteq M_{i+j},\forall\,i,j$.
				\item[(2)] $M$ is locally finite as a $\sfp$-module. Here $\sfp=\ggg_{\leq 0}:=\ggg_{{-1}}\oplus\ggg_{0}$  is defined in \S\,\ref{CartanType}.
				{ \item[(3)] $M$ is $\hhh$-semisimple, i.e., $M$ is a weight module: $M=\bigoplus_{\lambda\in\hhh^*}M_{\lambda}$.}
			\end{itemize}
			The morphisms in $\mathcal{O}$ are the $\ggg$-module morphisms that respect the $\mathbb{Z}$-grading, i.e.,
			$$\Hom_{\mathcal{O}}(M, N)=\{f\in\Hom_{U(\ggg)}(M, N)\mid f(M_{i})\subseteq N_{i},\,\forall\,i\in\mathbb{Z}\},\,\forall\,M,N\in\mathcal{O}.$$
		\end{defn}

We have the following observation on $\co$.
\begin{lem}\label{lem: depth}
Let $M$ be an object in $\co$, satisfying the property that is finitely-generated over $U(\ggg)$. Then there is a unique integer $d$ such that $M=\sum_{i\geq d}M_d$ with $M_d\ne0$.
\end{lem}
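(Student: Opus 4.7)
\medskip

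\noindent\textbf{Proof proposal.} The plan is to combine three ingredients: finite generation, the local finiteness of $M$ over $U(\sfp)$, and the fact that $\ggg_{>0}$ only raises the $\mathbb{Z}$-grading, together with a one-sided PBW factorization. The claim is really that a finitely-generated object of $\co$ is bounded below in its grading; uniqueness of $d$ is then immediate.

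First, fix a finite set of $U(\ggg)$-generators $m_1,\ldots,m_k$ of $M$. Since $M$ is $\mathbb{Z}$-graded and $U(\ggg)$ is $\mathbb{Z}$-graded, I may replace each $m_i$ by its finitely many homogeneous components and thereby assume every $m_i$ is homogeneous. By axiom (2) of Definition \ref{category O}, the $U(\sfp)$-submodule $U(\sfp)\cdot m_i$ is finite-dimensional, and by axiom (1) it is graded, hence concentrated in a finite range of degrees. Let $d_i$ denote the smallest integer such that $(U(\sfp)\cdot m_i)_{d_i}\neq 0$, and set $d=\min_{1\le i\le k}d_i$.

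Next I invoke PBW with respect to the vector space decomposition $\ggg=\sfp\oplus\ggg_{>0}$, which yields $U(\ggg)=U(\ggg_{>0})\cdot U(\sfp)$. Applying this to each generator gives
\[
U(\ggg)\cdot m_i \;=\; U(\ggg_{>0})\cdot\bigl(U(\sfp)\cdot m_i\bigr).
\]
Because $\ggg_{>0}=\bigoplus_{j\ge 1}\ggg_j$ sits in strictly positive degrees, $U(\ggg_{>0})$ is concentrated in degrees $\ge 0$, and so multiplication by elements of $U(\ggg_{>0})$ can only move a homogeneous vector to weakly higher graded degree. Consequently $U(\ggg)\cdot m_i\subseteq\bigoplus_{j\ge d_i}M_j$, and summing over $i$,
\[
M \;=\; \sum_{i=1}^{k}U(\ggg)\cdot m_i \;\subseteq\; \bigoplus_{j\ge d}M_j .
\]
If $M\neq 0$, then the set $\{j\in\bbz\mid M_j\neq 0\}$ is non-empty and bounded below by $d$, so it has a unique minimum, which is the asserted $d$.

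The only non-routine step I anticipate is the PBW factorization in the prescribed order $U(\ggg)=U(\ggg_{>0})\cdot U(\sfp)$; this is standard from the PBW theorem applied to the ordered basis obtained by putting a basis of $\ggg_{>0}$ after a basis of $\sfp$, but it is crucial that the factorization be taken in this direction, because $U(\sfp)$ contains $\ggg_{-1}$ and acting with $\ggg_{-1}$ last would allow arbitrarily negative degree shifts and destroy the bound. Once that is in place, local finiteness over $U(\sfp)$ does the rest of the work.
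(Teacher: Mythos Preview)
Your proof is correct and follows essentially the same line as the paper's: both reduce to a finite-dimensional graded $U(\sfp)$-generating space via local finiteness, then use the PBW factorization $U(\ggg)=U(\ggg_{\geq 1})\,U(\sfp)$ together with the fact that $\ggg_{\geq 1}$ sits in positive degrees. Your write-up is somewhat more explicit (decomposing generators into homogeneous pieces, flagging the correct PBW order), but the idea is identical.
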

		
\begin{proof} By assumption  that $M$ is finitely generated over $U(\ggg)$, and locally finite over $U(\sfp)$, it is readily known that there is a finite-dimensional  $\mathbb{Z}$-graded $U(\sfp)$-module $V$ such that $V$ is a generating  space over $U(\ggg)$.
 Assume $V=\sum_{j=1}^tV_{i_j}$ with  $i_1<\cdots<i_j<\cdots <i_t$. From $M=U(\ggg)V=U(\ggg_{\geq1})V$  it follows that $M=\sum_{i\geq i_1}M_i$. In particular, $i_1$ is  the  exactly desired integer $d$.
		\end{proof}
		
		The integer $d$ in lemma \ref{lem: depth} is called the depth of $M$, often written as $d(M)$. Sometimes, the above $M_d$  is called the depth space of $M$.

		\subsubsection{Shift functors by shifting depthes}
		Set $$\mathcal{O}_{\geq d}:=\{M\in\mathcal{O}\mid M=\sum\limits_{i\geq d}M_{i}\},$$ which consists of objects admitting depths not smaller than $d$.

	Consider the shift functor $T_{d,d'}:\mathcal{O}_{\geq d}\longrightarrow\mathcal{O}_{\geq d'}$ which by definition  $M=\sum_{k}M_k\in \mathcal{O}_{\geq d}$ is assigned  { to $M[d-d']\in\mathcal{O}_{\geq d'}$ where $M[d-d']$ denotes the same underling space as $M$, but $M[d-d']_k=M_{k+d-d'}$. }Then the functor $T_{d,d'}$ induces a category equivalence. With the shift functors, we can focus our concern on $\mathcal{O}_{\geq 0}$ (or $\mathcal{O}_{\geq d}$ for some specific depth $d$) when we make arguments on module structures.
		
		

		\subsection{Standard modules}
		\subsubsection{}\label{basic notations}
		Keep notations as before, in particular, $\ggg=\bigoplus\limits_{i=-1}^{\infty}\ggg_{i}$ is one of the Lie algebras of vector fields $W(n), S(n)$ and $H(n)$, and $\hhh$ is the standard Cartan subalgebra of $\ggg_{0}$ (recall $\ggg_{0}\cong \mathfrak{gl}(n)$ for $W(n)$, $\mathfrak{sl}(n)$ for $S(n)$ and $\mathfrak{sp}(n)$ for $H(n)$ under the isomorphism correspondence $W_{0}\rightarrow \mathfrak{gl}(n)$ with $x_i\partial_j\mapsto E_{ij}$). Denote by $\epsilon_i$ the linear function on $\sum_{j=1}^n\bbf x_j\partial_j$ via defining
		$\epsilon_i(x_j\partial_j)=\delta_{ij}$ for $1\leq i,j\leq n$. In the natural sense, we  identify the unit function $\epsilon_i$ with $(\delta_{1i},\cdots, \delta_{ni})$ for $1\leq i\leq n$.  With those unit linear functions, we can express the weight functions that we need for the arguments on $\ggg_{0}$-modules in the sequent.
		Let $\Lambda^+$ be the set of dominant integral weights relative to the standard Borel subalgebra $\mathfrak{b}:=\hhh+\nnn^+$ of $\ggg_{0}$. Then finite dimensional irreducible $\ggg_{0}$-modules are parameterized by ${\Lambda^+}\times\mathbb{Z}$. For any $\lambda\in {\Lambda^+}$, let ${}^dL^0(\lambda)$ be the simple $\ggg_{0}$-module concentrated in a single degree $d$ with the highest weight $\lambda$.  Set ${}^d\Delta(\lambda)=U(\ggg)\otimes_{U(\sfp)}{}^dL^0(\lambda)$,  where ${}^dL^0(\lambda)$ is regarded as a $\sfp$-module with trivial $\ggg_{{-1}}$-action. Then ${}^d\Delta(\lambda)$ is a standard module of depth $d$, and
		$\{{}^d\Delta(\lambda)\mid \lambda\in {\Lambda^+},d\in\mathbb{Z}\}$ constitutes a class of so-called standard modules of depth $d$ for $\ggg$ in the usual sense. We have the following result.
		\begin{lem}\label{lem1}
			Let $\lambda\in{\Lambda^+}, d\in\mathbb{Z}$. The following statements hold.
			\begin{itemize}
				\item[(1)] The standard module ${}^d\Delta(\lambda)$ is an object in $\mathcal{O}$.
				\item[(2)] The standard module ${}^d\Delta(\lambda)$ has a unique irreducible quotient, denoted by ${}^dL(\lambda)$.
				\item[(3)] The iso-classes of irreducible modules in $\mathcal{O}$ are parameterized by $\Omega:={\Lambda^+}\times \mathbb{Z}$. More precisely, each simple module $S$ in $\mathcal{O}$ is of the form $L(\mu)$ for some $\mu\in\Lambda^+$ with depth $d$.
			\end{itemize}
		\end{lem}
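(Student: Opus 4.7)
The plan is to handle the three parts in order using the PBW decomposition $U(\ggg)\cong U(\ggg_{\geq 1})\otimes U(\sfp)$, which realizes ${}^{d}\Delta(\lambda)\cong U(\ggg_{\geq 1})\otimes_{\bbf}{}^{d}L^0(\lambda)$ as a graded vector space with the degree of $1\otimes v$ set equal to $d$ for $v\in {}^{d}L^0(\lambda)$.

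For part (1), I would verify each axiom of Definition \ref{category O} directly. The finite-dimensionality $\dim\ggg_i<\infty$ for every $i$ forces each graded piece of $U(\ggg_{\geq 1})$ to be finite-dimensional, and hence the same holds for ${}^{d}\Delta(\lambda)=\sum_{i\geq d}{}^{d}\Delta(\lambda)_i$. The $\hhh$-semisimplicity is read off a PBW basis chosen in $\hhh$-weight vectors, using that $[\hhh,\ggg_i]\subseteq\ggg_i$. For $U(\sfp)$-local finiteness, the key observation is that $\ggg_{-1}$ strictly lowers degree while the grading is bounded below by $d$ and $\ggg_0$ preserves degree; hence for any homogeneous $v$ of degree $k$, the orbit $U(\sfp)v$ sits inside the finite-dimensional space $\bigoplus_{d\leq i\leq k}{}^{d}\Delta(\lambda)_i$.

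For part (2), I would first note that the depth space $({}^{d}\Delta(\lambda))_d=1\otimes {}^{d}L^0(\lambda)$ is simple as a $\ggg_0$-module. Any subobject $N\subset {}^{d}\Delta(\lambda)$ in $\co$ is graded, so $N_d$ is a $\ggg_0$-submodule of the depth space; if $N_d\neq 0$ then $N_d={}^{d}L^0(\lambda)$ forces $N$ to contain the generating space and hence equal ${}^{d}\Delta(\lambda)$. Therefore every proper subobject lies in $\bigoplus_{i>d}{}^{d}\Delta(\lambda)_i$, so the sum of all proper subobjects is still proper, yielding the unique maximal proper subobject and hence the unique simple quotient ${}^{d}L(\lambda)$.

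For part (3), given a simple object $S\in\co$, cyclicity implies it is finitely generated, so Lemma \ref{lem: depth} assigns it a depth $d$ with $S_d\neq 0$. Since $S_{d-1}=0$, the action of $\ggg_{-1}$ on $S_d$ is zero, making $S_d$ a $\sfp$-module with trivial $\ggg_{-1}$-action. As a finite-dimensional $\hhh$-semisimple module over the reductive $\ggg_0$, $S_d$ decomposes completely; picking any simple $\ggg_0$-summand $W\cong {}^{d}L^0(\mu)$ with $\mu\in\Lambda^+$ and applying Frobenius reciprocity produces a nonzero morphism ${}^{d}\Delta(\mu)\to S$ in $\co$, which must be surjective by simplicity, so $S\cong {}^{d}L(\mu)$. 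Uniqueness of the pair $(\mu,d)$ follows because morphisms in $\co$ preserve the grading (so depth is an invariant) and the depth space of ${}^{d}L(\mu)$ is ${}^{d}L^0(\mu)$, which determines $\mu$. The main technical point to watch is that subobjects in $\co$ must be regarded as graded subspaces (as built into the morphism convention of the category) for the depth-space argument to work; once this is in place, every step is reasonably routine.
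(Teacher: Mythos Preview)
The paper does not supply a proof of this lemma; it is stated as a background fact (the results are essentially those of the authors' earlier paper \cite{DSY}, see Lemma~3.2 there), so there is nothing to compare your argument against directly.

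Your proof is correct and is exactly the standard one for such statements in BGG-type categories. The PBW identification ${}^{d}\Delta(\lambda)\cong U(\ggg_{\geq 1})\otimes_{\bbf}{}^{d}L^0(\lambda)$ immediately gives (1); the observation that every proper graded submodule misses the simple depth space $({}^{d}\Delta(\lambda))_d$ gives (2); and Frobenius reciprocity together with Lemma~\ref{lem: depth} gives (3). Your closing caveat about subobjects being graded is the right thing to flag, and it is indeed handled by the morphism convention in Definition~\ref{category O}. One small stylistic point: in (3) you could note explicitly that the kernel of ${}^{d}\Delta(\mu)\twoheadrightarrow {}^{d}L(\mu)$ lives in degrees $>d$ (by the argument for (2)), so $({}^{d}L(\mu))_d={}^{d}L^0(\mu)$, which is what makes the recovery of $\mu$ from $S$ unambiguous; you use this implicitly but it is worth stating.
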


		\begin{rem} When $d=0$, we usually write ${}^0\Delta(\lambda)$ (resp. ${}^0L^0(\lambda)$, ${}^0L(\lambda)$) as  $\Delta(\lambda)$ (resp. $L^0(\lambda)$, $L(\lambda)$) for brevity.
		\end{rem}

		\subsection{Co-standard modules and their prolonging realization}
		Keep the same notations as in the previous sections.
		\subsubsection{Co-standard modules} Let $\lambda\in{\Lambda^+}$. Define the co-standard $\ggg$-module corresponding to $\lambda$ as { $$\nabla(\lambda):=\mathcal{H}om_{U(\ggg_{\geq0})}(U(
		\ggg), L^0(\lambda)),$$
		where $L^0(\lambda)$ is regarded as a $\ggg_{\geq 0}$-module with trivial $\ggg_{\geq 1}$-action. Here for two $\mathbb{Z}$-graded $U(\ggg_{\geq0})$-modules $M$ and $N$,  $\mathcal{H}om_{U(\ggg_{\geq0})}(M,N)=\bigoplus_{i\in\mathbb{Z}}\mathcal{H}om_{U(\ggg_{\geq0})}(M,N)_i$ denotes the the $\mathbb{Z}$-graded vector space with homogeneous components
$$\mathcal{H}om_{U(\ggg_{\geq0})}(M,N)_i=\{f\in Hom_{U(\ggg_{\geq0})}(M, N)\mid f(M_j)\subseteq N_{i+j}, \,\forall\, i\in\mathbb{Z}\}.$$}
		Then it is readily known that $\nabla(\lambda)\in\mathcal{O}_{\geq 0}$, parallel to \cite{Soe}. We have the following result.
		\begin{lem}\label{pro-inj}
			Let $\lambda,\mu\in{\Lambda^+}$, then the following statements hold.
			\begin{itemize}
				\item[(1)] $L(\lambda)$ admits a projective cover $\Delta(\lambda)$ in $\mathcal{O}_{\geq 0}$.
				\item[(2)] $L(\lambda)$ admits an injective hull $\nabla(\lambda)$ in $\mathcal{O}_{\geq 0}$.
				\item[(3)] $\Hom_{\mathcal{O}_{\geq 0}}(\Delta(\lambda), \nabla(\mu))=0$ if $\lambda\neq \mu$.
				\item[(4)] $\Ext_{\mathcal{O}_{\geq 0}}^1(\Delta(\lambda), \nabla(\mu))=0$ for any $\lambda,\mu$.
			\end{itemize}
		\end{lem}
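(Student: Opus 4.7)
The plan is to reduce items (1)--(4) to two Frobenius-type adjunction isomorphisms, combined with the reductive nature of $\ggg_0$. Concretely, for every $M\in\mathcal{O}_{\geq 0}$ I would first establish
\begin{equation*}
\Hom_{\mathcal{O}_{\geq 0}}(\Delta(\lambda), M)\cong \Hom_{\ggg_0}(L^0(\lambda), M_0),\qquad \Hom_{\mathcal{O}_{\geq 0}}(M, \nabla(\lambda))\cong \Hom_{\ggg_0}(M_0, L^0(\lambda)).
\end{equation*}
The first comes from Frobenius reciprocity for induction: a degree-$0$ $\sfp$-linear map from $L^0(\lambda)$ into $M$ lands in $M_0$, and in $\mathcal{O}_{\geq 0}$ it is automatically $\ggg_{-1}$-equivariant because the $\ggg_{-1}$-action on $L^0(\lambda)$ is trivial and $\ggg_{-1}\cdot M_0\subseteq M_{-1}=0$. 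The second comes from the coinduction adjunction: a degree-$0$ $\ggg_{\geq 0}$-linear map $M\to L^0(\lambda)$ must vanish on all $M_k$ with $k\neq 0$ by degree reasons, and its restriction to $M_0$ is automatically $\ggg_{\geq 1}$-equivariant since $\ggg_{\geq 1}$ strictly raises degree while acting trivially on $L^0(\lambda)$.

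The functor $M\mapsto M_0$ from $\mathcal{O}_{\geq 0}$ to finite-dimensional $\ggg_0$-modules is exact, and the $\Hom_{\ggg_0}$-functors against the simple module $L^0(\lambda)$ are exact on this semisimple target category. Composing yields projectivity of $\Delta(\lambda)$ and injectivity of $\nabla(\lambda)$; combined with Lemma~\ref{lem1}(2) this makes the canonical surjection $\Delta(\lambda)\twoheadrightarrow L(\lambda)$ a projective cover, because its kernel is the unique maximal submodule of $\Delta(\lambda)$ and is therefore superfluous, proving (1). For the injective hull in (2), the identity in $\Hom_{\ggg_0}(L^0(\lambda), L^0(\lambda))$ gives by adjunction a non-zero morphism $\iota\colon L(\lambda)\to \nabla(\lambda)$, which is injective because $\ker\iota$ must have trivial degree-$0$ part, while $L(\lambda)$ is simple with $L(\lambda)_0=L^0(\lambda)\neq 0$. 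Essentiality of $\iota$ reduces to showing that every non-zero submodule $N\subseteq\nabla(\lambda)$ satisfies $N_0\neq 0$; since the image of $L(\lambda)$ hits all of $\nabla(\lambda)_0=L^0(\lambda)$, this would force $N\cap L(\lambda)\supseteq N_0\neq 0$. Picking $v\in N$ of minimal degree $k$, each $\partial_i v\in N_{k-1}$ vanishes by minimality, so $v$ lies in the joint kernel of all $\partial_i$ on $\nabla(\lambda)$; on the explicit prolongation/PBW model $\nabla(\lambda)\cong\bbf[x_1,\ldots,x_n]\otimes L^0(\lambda)$, on which the $\ggg_{-1}$-action coincides with ordinary differentiation, this forces $k=0$.

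Parts (3) and (4) are then formal. The adjunction gives $\Hom_{\mathcal{O}_{\geq 0}}(\Delta(\lambda),\nabla(\mu))\cong \Hom_{\ggg_0}(L^0(\lambda),L^0(\mu))$, which vanishes for $\lambda\neq\mu$ by Schur's lemma, proving (3); and $\Ext^1_{\mathcal{O}_{\geq 0}}(\Delta(\lambda),\nabla(\mu))=0$ for all $\lambda,\mu$ is immediate from the projectivity of $\Delta(\lambda)$ (equivalently, injectivity of $\nabla(\mu)$), proving (4). The main obstacle in the plan is the essentiality step in (2): one must rule out non-zero submodules of $\nabla(\lambda)$ concentrated in strictly positive degrees, and the cleanest route I see relies on an explicit description of $\nabla(\lambda)$ via prolongation together with the elementary fact that the only polynomial in $x_1,\ldots,x_n$ annihilated by every partial derivative is a constant.
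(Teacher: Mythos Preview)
The paper does not supply its own proof of this lemma; it is recalled in the preliminaries from the companion paper \cite{DSY} (cf.\ the explicit citation of \cite[Lemma~4.1]{DSY} later in the proof of Proposition~\ref{prop: compute ext}). Your argument via the Frobenius adjunctions for induction from $\sfp$ and coinduction from $\ggg_{\geq0}$, combined with exactness of the functor $M\mapsto M_0$ and semisimplicity of finite-dimensional $\hhh$-semisimple $\ggg_0$-modules, is correct and is the standard route for results of this type. The one forward reference---invoking the prolongation model of Proposition~\ref{co-standard iso} for the essentiality step in (2)---is logically harmless since that proposition does not depend on the present lemma; alternatively, the same conclusion can be read off directly from the coinduction description $\nabla(\lambda)\cong\Hom_\bbf(U(\ggg_{-1}),L^0(\lambda))$ via PBW, where each $\partial_i$ acts by the shift $\phi\mapsto\phi(-\,\partial_i)$, so a homogeneous element annihilated by all $\partial_i$ is supported on $1\in U(\ggg_{-1})$ and hence sits in degree~$0$.
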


		\subsubsection{Prolongation realization}\label{sec: prolong real} In this subsection, we recall  a kind of realization of co-standard modules $\nabla(\lambda)$ for $\lambda\in{\Lambda^+}$ via prolonging irreducible $\textsf{P}$-module $L^0(\lambda)$ which is an irreducible $\ggg_0$-module with $\ggg_{-1}$-trivial action. Set $\mathcal{V}(\lambda)=\calr\otimes L^0(\lambda)$ for $\lambda\in{\Lambda^+}$. It follows from \cite[Theorem 2.1]{Skr}  that we can endow with a $W(n)$-module structure $\rho_{_{W(n)}}$ on $\mathcal{V}(\lambda)$ via
		\begin{equation}\label{W(n)-module structure}
			\rho_{_{W(n)}}(\sum\limits_{i=1}^n f_i\partial_i)(g\otimes v)=\sum\limits_{i=1}^n f_i(\partial_i(g))\otimes v+\sum\limits_{i=1}^n\sum\limits_{j=1}^n (\partial_j(f_i))g\otimes \xi(x_j\partial_i)v
		\end{equation}
		for any $f_i,g\in \calr, v\in L^0(\lambda)$,
		where $\xi$ is the representation of $W(n)_{0}$ on $L^0(\lambda)$. Furthermore, it is a routine to check that we have a $\ggg$-module structure
		on $\mathcal{V}(\lambda)$
		via:
		\begin{eqnarray}\label{S(n)-module}
			\rho_{\ggg}(D_{kl}(x^{\alpha})) (g\otimes v)&=&(D_{kl}(x^{\alpha}))(g)\otimes v+\alpha(k)
			\alpha(l)x^{\alpha-\epsilon_k-\epsilon_l}g
			\otimes\xi(x_k\partial_k-x_l\partial_l)v\nonumber
			\\
			&&+\sum\limits_{\stackrel{j=1}{j\neq k}}^n
			\alpha(l)(\alpha(j)-\delta_{jl})x^{\alpha-\epsilon_j-\epsilon_l}g\otimes\xi(x_j\partial_k)v\nonumber\\
			&&-\sum\limits_{\stackrel{j=1}{j\neq l}}^n
			\alpha(k)(\alpha(j)-\delta_{jk})
			x^{\alpha-\epsilon_j-\epsilon_k}g\otimes\xi(x_j\partial_l)v,
		\end{eqnarray}
		and
				\begin{eqnarray}\label{H(n)-module}
			\rho_{\ggg}(D_H(x^{\alpha})) (g\otimes v)&=&(D_H(x^{\alpha}))(g)\otimes v+\sum\limits_{j=1}^{2r}\sigma(j)\alpha(j)(\alpha(j)-1)x^{\alpha-2\epsilon_j}g\otimes \xi (x_{j}\partial_{j^{\prime}})v\nonumber   \\
			&&+\sum\limits_{1\leq j<k\leq r}\alpha(j)\alpha(k)x^{\alpha-\epsilon_j-\epsilon_k}g\otimes\xi(x_k\partial_{j^{\prime}}+x_j\partial_{k^{\prime}})v\nonumber\\
			&&-\sum\limits_{k=1}^r\sum\limits_{j=r+1}^{2r}\alpha(j)\alpha(k)x^{\alpha-\epsilon_j-\epsilon_k}g\otimes\xi(x_k\partial_{j^{\prime}}-
			x_j\partial_{k^{\prime}})v\nonumber\\
			&&-\sum\limits_{r+1\leq j<k\leq 2r}\alpha(j)\alpha(k)x^{\alpha-\epsilon_j-\epsilon_k}g
			\otimes\xi(x_k\partial_{j^{\prime}}+x_j\partial_{k^{\prime}})v
		\end{eqnarray}
		for $\ggg=S(n), H(n),$ respectively. Here $\alpha\in\mathbb{N}^n, 1\leq k<l\leq n, g\in \calr, v\in  L^0(\lambda)$,
		$\xi$ is the representation of $\ggg_{0}$ on $L^0(\lambda)$.

		\begin{rem}  Professor Guangyu Shen found the prolongation of $\ggg_0$-modules in \cite{Shen1}, by constructing ``mixed product".   A conceptual account for his construction  was provided in \cite[Theorem 1.2]{Shen1}.
It is worth mentioning that we adopt here Skryabin's presentation in \cite{Skr} for type $W$,  and in \cite{YS1, YS2} for types $S$ and $H$.

These modules were also constructed by Larsson in \cite{Lar}.
		\end{rem}		
The following result asserts that the co-standard
		$\ggg$-module $\nabla(\lambda)$ is isomorphic to $\mathcal{V}(\lambda)$ for $\ggg=X(n)$, $X\in\{W,S,H\}$.
		
		\begin{prop}\label{co-standard iso}
			Keep the notations as above, then $\nabla(\lambda)\cong \mathcal{V}(\lambda)$ as $U(\ggg)$-modules.
		\end{prop}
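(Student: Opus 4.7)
The plan is to construct a morphism $\Phi:\cv(\lambda)\to\nabla(\lambda)$ via the universal property of coinduction, verify injectivity by a degree-lowering argument, and close by a graded-dimension count.

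First, the depth-$0$ piece of $\cv(\lambda)$ is $1\otimes L^0(\lambda)\cong L^0(\lambda)$ as $\ggg_0$-module (read off directly from (\ref{W(n)-module structure})), and both sides live in degree $0$. The canonical projection
$$\pi:\cv(\lambda)\twoheadrightarrow L^0(\lambda),\qquad g\otimes v\mapsto g(0)\,v,$$
is therefore a graded $U(\ggg_{\geq 0})$-module map of degree zero: it intertwines the $\ggg_0$-action because $\ggg_0$ preserves degrees and acts on $1\otimes L^0(\lambda)$ through $\xi$, and it intertwines the $\ggg_{\geq 1}$-action trivially since both $\pi(X\cdot m)$ (as $X\cdot m$ sits in positive degree) and $X\cdot\pi(m)$ (since $\ggg_{\geq 1}$ annihilates $L^0(\lambda)$) vanish. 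Right-adjointness of the coinduction functor $\mathcal{H}om_{U(\ggg_{\geq 0})}(U(\ggg),-)$ to restriction then yields a unique graded $U(\ggg)$-morphism
$$\Phi:\cv(\lambda)\longrightarrow\nabla(\lambda),\qquad \Phi(m)(u):=\pi(u\cdot m).$$

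Next, I match graded dimensions. In each of the three cases one has $\ggg_{-1}=\text{span}_{\bbf}\{\partial_1,\ldots,\partial_n\}$ (for $H(2r)$, via $D_H(x_i)=\sigma(i)\partial_{i'}$), so by PBW $U(\ggg)\cong U(\ggg_{-1})\otimes U(\ggg_{\geq 0})$ as right $U(\ggg_{\geq 0})$-modules, with $U(\ggg_{-1})\cong\bbf[\partial_1,\ldots,\partial_n]$. A degree-$i$ element of $\nabla(\lambda)$ is pinned down by its restriction to $U(\ggg_{-1})_{-i}$, which lands in $L^0(\lambda)$, so
$$\dim\nabla(\lambda)_i=\binom{n+i-1}{n-1}\dim L^0(\lambda)=\dim\calr_i\cdot\dim L^0(\lambda)=\dim\cv(\lambda)_i.$$

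For injectivity, I claim that every nonzero graded $\ggg$-submodule $N\subseteq\cv(\lambda)$ has $N_0\neq 0$. Let $k$ be minimal with $N_k\neq 0$ and pick $0\neq w=\sum_{|\alpha|=k}x^\alpha\otimes v_\alpha\in N_k$. Supposing $k\geq 1$, specializing (\ref{W(n)-module structure})--(\ref{H(n)-module}) to a canonical generator of $\ggg_{-1}$ (multi-index $\alpha=\epsilon_j$), every $\xi$-correction prefactor such as $\alpha(i)(\alpha(l)-\delta_{il})$ and $\alpha(j)\alpha(k)$ collapses to zero, so $\partial_j$ acts by the uniform shift $g\otimes v\mapsto\partial_j(g)\otimes v$. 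Hence
$$\partial_j\cdot w=\sum_{|\beta|=k-1}(\beta(j)+1)\,x^\beta\otimes v_{\beta+\epsilon_j},$$
and if this vanishes for every $j$ then $v_\alpha=0$ whenever some $\alpha(j)\geq 1$, forcing $w=0$, a contradiction. Therefore some $\partial_j\cdot w\in N_{k-1}\setminus\{0\}$, contradicting minimality; so $k=0$. Applying the claim to $\ker\Phi$ (a graded submodule since $\Phi$ is graded), if $\ker\Phi\neq 0$ then $(\ker\Phi)_0\neq 0$; but $\Phi(1\otimes v)(1)=\pi(1\otimes v)=v$ shows $\Phi|_{\cv(\lambda)_0}$ is injective, so $\ker\Phi=0$. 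Combined with the dimension match, $\Phi$ is an isomorphism of graded $U(\ggg)$-modules.

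The main obstacle is the degree-lowering step: although transparent for $W(n)$, in the $S(n)$ and $H(n)$ cases one must inspect the cross-terms in (\ref{S(n)-module}) and (\ref{H(n)-module}) to confirm that the canonical $\ggg_{-1}$-generators still act by the shift $g\otimes v\mapsto\partial_j(g)\otimes v$. With $\alpha=\epsilon_j$ every correction prefactor collapses, but this verification is the one non-formal ingredient.
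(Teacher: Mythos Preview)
Your argument is correct. The paper does not actually supply a proof of this proposition; it is recalled from the predecessor paper \cite{DSY} (and ultimately from the constructions of Rudakov, Shen and Skryabin), so there is no ``paper proof'' to compare against line by line.

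Your route is the standard one and is cleanly executed: Frobenius reciprocity for graded coinduction turns the $U(\ggg_{\geq 0})$-projection $\pi:\cv(\lambda)\twoheadrightarrow L^0(\lambda)$ into a graded $U(\ggg)$-map $\Phi:\cv(\lambda)\to\nabla(\lambda)$; the PBW decomposition $U(\ggg)\cong U(\ggg_{\geq 0})\otimes U(\ggg_{-1})$ (as left $U(\ggg_{\geq 0})$-modules) gives the degreewise dimension match; and the socle argument shows injectivity. The only substantive check, as you flag, is that the canonical generators of $\ggg_{-1}$ act on $\cv(\lambda)$ by the pure shift $g\otimes v\mapsto\partial_j(g)\otimes v$. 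Your verification of this for $S(n)$ and $H(n)$ is accurate: with $\alpha=\epsilon_l$ in (\ref{S(n)-module}) one has $\alpha(k)=0$ and $\alpha(l)(\alpha(j)-\delta_{jl})=0$ for all $j$, and with $\alpha=\epsilon_i$ in (\ref{H(n)-module}) every prefactor $\alpha(j)(\alpha(j)-1)$ and $\alpha(j)\alpha(k)$ with $j\neq k$ vanishes. One small wording fix: for the coinduction adjunction and the restriction-to-$U(\ggg_{-1})$ step you want $U(\ggg)$ decomposed as a \emph{left} $U(\ggg_{\geq 0})$-module (you wrote ``right''), but the dimension count is unaffected.
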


		\subsubsection{}  Recall the notations $\epsilon_i$ in \S\ref{basic notations} for the unit linear functions on span${}_\bbf\{x_i\partial_i\mid i=1,\ldots,n\}\cong\hhh$. We have the following definition of exceptional weights for further use.
		\begin{defn}
			Let $\ggg=X(n), X\in\{W,S,H\}$, be a Lie algebra of vector fields. Set $\omega_0=0$ and $\omega_k=\epsilon_{1}+\epsilon_{2}+\cdots+\epsilon_k$			
			for $1\leq k\leq n^{\prime}$, where
			\begin{equation*}
				n^{\prime}=\begin{cases}
				       n-1,&\text{if}\,\,X=W,S,\cr
					\frac{n}{2},&\text{if}\,\,X=H.
				\end{cases}
			\end{equation*}
			These $\omega_k\,(0\leq k\leq n^{\prime})$ are called exceptional weights. The corresponding simple $\ggg$-modules $L(\omega_k)$ ($0\leq k\leq n^{\prime}$) are called exceptional $\ggg$-modules.
		\end{defn}
		
		The following result is due to A. Rudakov and G. Shen.
		
		\begin{prop}(\cite[Theorem 13.7, and Corollaries 13.8-13.9]{Rud}, \cite[Theorem 4.8]{Rud2} and \cite[Theorem 2.4]{Shen})\label{known result1}
			Let $\ggg=W(n)$ or $S(n)$. Then the following statements hold.
			\begin{itemize}
				\item[(1)] If $\lambda\in{\Lambda^+}$ is not exceptional, then $\mathcal{V}(\lambda)$ is a simple $\ggg$-module.
				\item[(2)] The following sequence
\begin{align*}
0\longrightarrow\mathcal{V}(\omega_0)\xrightarrow{\,\,\,d_0\,\,\,}\mathcal{V}(\omega_1)\xrightarrow{\,\,\,d_1\,\,\,}\cdots\cdots \mathcal{V}(\omega_k)\xrightarrow{\,\,\,d_k\,\,\,}
\mathcal{V}(\omega_{k+1})\xrightarrow{d_{k+1}}\cdots\longrightarrow \cr
\cdots\longrightarrow \mathcal{V}(\omega_{n-1})\xrightarrow{d_{n-1}} \mathcal{V}(\omega_{n})\longrightarrow 0
\end{align*}
		
			is exact, where
			\begin{eqnarray*}
				d_k:\,\mathcal{V}(\omega_k)&\longrightarrow &\mathcal{V}(\omega_{k+1})\\
				x^{\alpha}\otimes (v_{j_1}\wedge\cdots\wedge v_{j_k})&\longmapsto&\sum\limits_{i=1}^n\partial_i(x^{\alpha})\otimes (v_{j_1}\wedge\cdots\wedge v_{j_k}\wedge v_i),\\
& &\forall\,\alpha\in {\mathbb{N}}^n, 1\leq j_1<\cdots < j_k\leq n.
			\end{eqnarray*}
			\item[(3)] For $0\leq k\leq n-1$, $\mathcal{V}(\omega_k)$ contains two composition factors $L(\omega_k)$ and $L(\omega_{k+1})$ with free multiplicity. Moreover, $\mathcal{V}(\omega_n)\cong L(\omega_n)$.
			
			\end{itemize}	
		\end{prop}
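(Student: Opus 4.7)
The plan is to identify each prolongation module $\mathcal{V}(\omega_k)$ with the space $\Omega^k(\bba^n)$ of polynomial differential $k$-forms on affine space, equipped with the Lie-derivative action, and to identify $d_k$ with the de~Rham differential. Concretely, under $L^0(\omega_k) \cong \wedge^k \bbf^n$ with $v_{j_1}\wedge\cdots\wedge v_{j_k} \leftrightarrow dx_{j_1}\wedge\cdots\wedge dx_{j_k}$, one checks that formula (2.3.1) defining $\rho_{W(n)}$ reproduces the Cartan formula $L_X = i_X d + d\,i_X$: it suffices to verify agreement on the algebra generators $x_j\partial_i \in \ggg_0$ and $\partial_i \in \ggg_{-1}$, since the remaining actions are determined by these and the $\calr$-module structure; the key identity is $\xi(x_j\partial_i)v_k = \delta_{ik}v_j$, which matches $L_{x_j\partial_i}(dx_k) = \delta_{ik}dx_j$. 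For $\ggg = S(n)$ the subalgebra of divergence-free fields preserves the same complex, and (2.3.2) specializes to the corresponding restriction.

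With this identification in hand, part (2) becomes the polynomial Poincar\'e lemma: the de~Rham complex of polynomial forms on $\bba^n$ has cohomology $\bbf$ concentrated in degree $0$ and zero elsewhere. An explicit proof uses the radial homotopy $h\colon \Omega^k \to \Omega^{k-1}$ satisfying $dh + hd = \id$ on the positive-degree part of $\Omega^k$, giving exactness at every $\mathcal{V}(\omega_k)$ with $k\geq 1$ and surjectivity of $d_{n-1}$. (At $\mathcal{V}(\omega_0)$ the kernel of $d_0$ is the one-dimensional constant subspace $\cong L(\omega_0)$, and the stated sequence must be read with this understood.) Part (3) then follows quickly: because $\mathcal{V}(\omega_k) = \nabla(\omega_k)$ is the injective hull of its simple socle $L(\omega_k)$ by Lemma~\ref{pro-inj}(2), exactness forces $\ker d_k = L(\omega_k)$, and hence $\mathcal{V}(\omega_k)/L(\omega_k) \hookrightarrow \mathcal{V}(\omega_{k+1})$ via $d_k$; inducting downward on $k$ from the top, the image is identified with the socle $L(\omega_{k+1})$, yielding a two-step composition series with each factor of multiplicity one, and at the end $\mathcal{V}(\omega_n) \cong L(\omega_n)$.

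The main obstacle is part~(1): showing $\mathcal{V}(\lambda)$ is simple when $\lambda \in \Lambda^+$ is not one of the $\omega_k$. Since the socle is always $L(\lambda)$, simplicity amounts to ruling out any proper quotient. The strategy is: suppose $\mathcal{V}(\lambda) \twoheadrightarrow Q$ is a proper nonzero quotient; the Lie-Cartan compatibility $[\rho(\xi),\theta(f)] = \theta(\xi f)$ between the $\calr$-action and the $\ggg$-action on $\mathcal{V}(\lambda)$ (inherited by $Q$) implies that such a $Q$ produces a nonzero $\ggg_0$-equivariant ``primitive'' map from $L^0(\lambda)$ into a Koszul-style tensor complex built from $\wedge^\bullet \bbf^n \otimes L^0(\mu)$, compatible with the $\ggg_{-1}$-action playing the role of a Koszul differential. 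A Pieri and weight-combinatorics analysis of $L^0(\lambda) \otimes \bbf^n$ then shows such a map can exist only when $L^0(\lambda)$ is a fundamental $\mathfrak{gl}(n)$-representation (and, for $\ggg = S(n)$, a fundamental $\mathfrak{sl}(n)$-representation), i.e.\ only when $\lambda = \omega_k$. This combinatorial case analysis is the technical heart of the argument and the step I expect to be the main obstacle; it is where the exceptional list $\{\omega_0,\dots,\omega_{n'}\}$ actually emerges from the weight structure of the fundamental representations.
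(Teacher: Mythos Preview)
The paper does not prove this proposition: it is quoted as a known result from Rudakov and Shen, and serves purely as background input. So there is no ``paper's own proof'' to compare against; you are attempting strictly more than the paper does.

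Your overall plan is in the spirit of the cited references. The identification of $\mathcal{V}(\omega_k)$ with polynomial $k$-forms carrying the Lie-derivative action, and of $d_k$ with the exterior derivative, is correct and standard, and the Poincar\'e lemma gives (2) (with your caveat about the constants at $k=0$). Your outline for (1) via an analysis of $\ggg_{-1}$-primitive vectors is precisely Rudakov's method: one classifies $\ggg_{-1}$-annihilated highest-weight vectors in $\mathcal{V}(\lambda)$ and shows they lie only in the depth space unless $\lambda$ is one of the $\omega_k$. That combinatorial classification is indeed the technical heart.

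There is, however, a genuine gap in your deduction of (3). From Lemma~\ref{pro-inj}(2) you know only that $L(\omega_k)$ is the \emph{socle} of $\mathcal{V}(\omega_k)$; combined with exactness this gives $L(\omega_k)\subseteq\ker d_k$ and $L(\omega_{k+1})\subseteq\im d_k$, but nothing forces these inclusions to be equalities. Your downward induction starts from $\mathcal{V}(\omega_n)\cong L(\omega_n)$ (which already consumes (1), since $\omega_n$ is non-exceptional), yet the inductive step still cannot rule out extra composition factors inside $\ker d_k=\im d_{k-1}$: knowing that $\im d_{k-1}$ is a quotient of $\mathcal{V}(\omega_{k-1})$ tells you nothing about its length until you already know the length of $\mathcal{V}(\omega_{k-1})$. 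In Rudakov's argument, (1) and (3) are proved \emph{simultaneously} by the singular-vector classification: one shows directly that the only $\ggg_{-1}$-primitive highest-weight vectors in $\mathcal{V}(\omega_k)$ occur in degrees $0$ and $1$, which forces length exactly two. So (3) is not a formal corollary of (1), (2), and the injective-hull property; it requires the same technical analysis you flag as the main obstacle for (1).
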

		
		\begin{prop}(\cite[Theorem 5.10]{Rud2} and \cite[Theorem 2.5]{Shen})\label{known result2}
			Let $\ggg=H(n)$, $n=2r$. Then the following statements hold.
			\begin{itemize}
				\item[(1)] If $\lambda\in{\Lambda^+}$ is not exceptional, then $\mathcal{V}(\lambda)$ is a simple $\ggg$-module.
				\item[(2)] The composition factors of $\mathcal{V}(\omega_k)$ are $L(\omega_{k-1}), L(\omega_{k})$ and $L(\omega_{k+1})$ with
				$[\mathcal{V}(\omega_k):L(\omega_{k-1})]=[\mathcal{V}(\omega_k):L(\omega_{k+1})]=1$ and $[\mathcal{V}(\omega_k):L(\omega_{k})]=2$, $0\leq k\leq r$, where we appoint  $L(\omega_{-1})=0$, $\omega_{r+1}=\epsilon_{1}+\epsilon_{2}+\cdots+\epsilon_{r+1}$.
			\end{itemize}
		\end{prop}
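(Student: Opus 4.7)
The plan is to combine the injective-hull description $\mathcal{V}(\lambda) \cong \nabla(\lambda)$ from Proposition \ref{co-standard iso} and Lemma \ref{pro-inj}(2) with an explicit analysis of singular vectors in $\calr \otimes L^0(\lambda)$, where by a \emph{singular vector} I mean a weight vector annihilated by $\nnn^+$ and by all of $\ggg_{\geq 1}$. Since $L(\lambda)$ is the socle of $\nabla(\lambda) \cong \mathcal{V}(\lambda)$, it occurs as a composition factor with multiplicity at least one for every $\lambda$, and any additional composition factor corresponds to a singular vector in $\mathcal{V}(\lambda)$ of some weight $\mu \ne \lambda$: such a vector produces, via Frobenius reciprocity, a homomorphism from $\Delta(\mu)$ into the module and hence a highest-weight simple subquotient.

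To classify singular vectors, write a candidate as $w = \sum_\alpha x^\alpha \otimes v_\alpha$ with $v_\alpha \in L^0(\lambda)$ and $|\alpha|$ fixed by the weight constraint. Applying formula \eqref{H(n)-module} to the cubic generators $D_H(x^\beta)$ of $\ggg_{\geq 1}$ yields a linear system in the $v_\alpha$. Repackaging this system via the $\mathfrak{sp}(n)$-equivariant Lefschetz/Hodge decomposition of $\calr_d$ with respect to the symplectic form $\Omega = \sum_{i=1}^r x_i x_{i+r}$ (equivalently its dual two-form), and exploiting that the $\mathfrak{sp}(n)$-module $L^0(\omega_k)$ is the primitive fundamental, reduces the conditions to identities involving contraction with, and exterior multiplication by, $\Omega$. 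A case analysis then shows that singular vectors with $\mu \ne \lambda$ can occur only when $\lambda = \omega_k$ for some $0 \leq k \leq r$, with one such vector for each of $\mu = \omega_{k-1}$ and $\mu = \omega_{k+1}$, and with two linearly independent singular vectors of weight $\omega_k$ in $\mathcal{V}(\omega_k)$ itself, namely $1 \otimes v_{\omega_k}$ and a second vector built by $\Omega$-contraction from a quadratic expression.

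Part (1) is then immediate: for non-exceptional $\lambda$ the only singular vector is $1 \otimes v_\lambda$, forcing $\mathcal{V}(\lambda) = L(\lambda)$. For part (2) the four singular vectors produce the subquotients $L(\omega_{k-1})$, $L(\omega_k)$ (twice) and $L(\omega_{k+1})$; to see that these exhaust the composition series I would compare formal characters, using $\operatorname{ch}\mathcal{V}(\omega_k) = \operatorname{ch}\calr \cdot \operatorname{ch} L^0(\omega_k)$ on the one side, and on the other an Euler--Poincar\'e count in the Grothendieck group that follows from the classification above. The main obstacle will be the singular-vector classification itself: formula \eqref{H(n)-module} mixes polynomial multiplication, partial differentiation and the $\mathfrak{sp}(n)$-action in a way that decouples only after the symplectic Lefschetz framework is invoked, and keeping careful track of the primitivity of $L^0(\omega_k)$ is essential to rule out spurious solutions. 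A cleaner alternative would be to construct explicit $\ggg$-equivariant maps $\mathcal{V}(\omega_{k-1}) \to \mathcal{V}(\omega_k) \to \mathcal{V}(\omega_{k+1})$ assembling into a symplectic analogue of the de Rham complex in Proposition \ref{known result1}(2), whose cohomology would then pin down the multiplicities directly, at the cost of verifying well-definedness of the contraction maps on primitive components.
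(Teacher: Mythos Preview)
The paper does not supply its own proof of this proposition: it is quoted as a known result from Rudakov \cite[Theorem~5.10]{Rud2} and Shen \cite[Theorem~2.5]{Shen}, so there is no in-paper argument to compare against. Your sketch is broadly in the spirit of Rudakov's original treatment, which likewise proceeds by classifying singular vectors in the prolonged module and then reading off the composition series.

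One point deserves care. The correspondence ``composition factor $\leftrightarrow$ singular vector'' is clean only in one direction: a singular vector of weight $\mu$ produces a copy of $L(\mu)$ as a subquotient, but a composition factor need not come from a singular vector in $\mathcal{V}(\lambda)$ itself---only from one in some successive quotient. In particular, your assertion that $\mathcal{V}(\omega_k)$ contains two linearly independent singular vectors of weight $\omega_k$ is strictly stronger than the multiplicity statement $[\mathcal{V}(\omega_k):L(\omega_k)]=2$; whether a genuine second singular vector exists in $\mathcal{V}(\omega_k)$, rather than only after passing to $\mathcal{V}(\omega_k)/L(\omega_k)$, depends on the submodule lattice and should be checked rather than assumed. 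Your fallback via formal character comparison, or via an explicit symplectic de~Rham-type complex linking the $\mathcal{V}(\omega_k)$, is the more robust route and is closer to how the cited references actually pin down the multiplicities.
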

		
		\begin{rem} There is a modular version of Propositions \ref{known result1}, \ref{known result2} (cf. \cite[Theorems 2.1, 2.2, 2.3]{Shen}).
		\end{rem}

		\section{Lie-Cartan modules}

				\subsection{{Definition of  a Lie-Cartan module}} 
        \begin{defn}\label{def: lc mod}
 Call $M\in\co$ a Lie-Cartan module if $M$ is an 
					$\calr$-module with action $\theta$, and a $\ggg$-module with action $\rho$. Both are compatible  in the following sense.
					\begin{itemize}
						\item[(LC-1)] 
						$[\rho(X),\theta(f)]=\theta(X(f))\text{ for any }f\in \calr \text{ and } X\in \ggg$.
						\item[(LC-2)] The degrees in $\calr$ are compatible with the gradings of $M$ in $\co$. This is to say,  $\theta(\calr_i)M_j\subset M_{i+j}$.
					\end{itemize}
					We will often indicate a Lie-Cartan module by the triple $(M,\rho,\theta)$, and call the pair $(\rho,\theta)$ its structure mappings.
				\end{defn}
				
				\begin{rem} Under his supervisor Bin Shu's suggestions and instructions, Lianqing Geng introduced Lie-Cartan modules and studied some basic properties  in his thesis at East China Normal University  \cite{Geng} in 2022-2023.
				\end{rem}

		\begin{rem} \label{ex: Lie-Cartan containing}
			\begin{itemize}
				\item[(1)]
				It is a routine to check that $\cv(\lambda)$ is a Lie-Cartan module with free $\calr$-module structure and $\ggg$-module structure defined by (\ref{W(n)-module structure}), (\ref{S(n)-module}) and (\ref{H(n)-module}).
				
				\item[(2)]
				It is worth mentioning that the Lie-Cartan module $\cv(\lambda)$ contains the irreducible $\ggg$-submodule $L(\lambda)$ by Lemma \ref{pro-inj} and Proposition \ref{co-standard iso}. This fact will be used later.
			\end{itemize}
		\end{rem}

\subsection{The Lie-Cartan modules \texorpdfstring{$\cv(\lambda)$}{} }

	\begin{lem}\label{lem: basic prop} Let $(M,\rho,\theta)$ be a Lie-Cartan module with $M=\sum_{k\geq 0}M_k$ of depth $0$.
		Then the homogeneous subspace $M_k$ contains the subspace $S_k:=\theta(\calr_k)M_0$. Furthermore, $S_k$ is free over $\calr_k$ of rank $m:=\dim M_0$ in the sense that
				\begin{itemize}
					\item[(1.1)] $\theta(\calr_k)M_0=\sum_{i=1}^m \theta(\calr_k)v_i$ for a basis $\{v_i\mid i=1,\ldots,m\}$ of $M_0$, and
					\item[(1.2)] $\sum_{i=1}^m \theta(f_i)v_i=0$ with $f_i\in\calr_i$ implies all $f_i=0$.
				\end{itemize}
	\end{lem}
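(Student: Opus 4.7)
The plan is to verify the three pieces separately, with the freeness claim (1.2) being the substantive one.

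First, the inclusion $\theta(\calr_k)M_0 \subseteq M_k$ is immediate from axiom (LC-2) applied with $j=0$, so $S_k \subseteq M_k$. Next, (1.1) is a triviality: since $\theta$ is $\bbf$-linear and $\{v_1,\ldots,v_m\}$ spans $M_0$, every element of $\theta(\calr_k)M_0$ is a sum $\sum_i \theta(f_i)v_i$ with $f_i \in \calr_k$, giving the decomposition $\theta(\calr_k)M_0 = \sum_{i=1}^m \theta(\calr_k)v_i$.

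The main content is (1.2). The key observation is that each partial derivative $\partial_j$ lies in $\ggg_{-1}$ (this holds uniformly for $\ggg \in \{W(n), S(n), H(n)\}$: for $W(n)$ by definition, and for $S(n)$ and $H(n)$ one reads off $D_{ij}(x_k) = \pm \partial_\ell$ and $D_H(x_j) = \pm \partial_{j'}$ in the definitions of \S\ref{CartanType}). Because $M$ has depth $0$, $\rho(\partial_j)v_i \in M_{-1} = 0$ for every basis vector $v_i$. Combining this with the compatibility axiom (LC-1), for any $f \in \calr$ and any $v \in M_0$ one obtains
\begin{equation*}
\rho(\partial_j)\theta(f)v = [\rho(\partial_j),\theta(f)]v + \theta(f)\rho(\partial_j)v = \theta(\partial_j(f))v.
\end{equation*}
Iterating this identity, for any multi-index $\alpha = (\alpha_1,\ldots,\alpha_n)$ with $|\alpha| = k$ and writing $\rho(\partial)^\alpha := \rho(\partial_1)^{\alpha_1}\cdots\rho(\partial_n)^{\alpha_n}$, we get $\rho(\partial)^\alpha\theta(f)v = \theta(\partial^\alpha(f))v$ for $v \in M_0$ and $f \in \calr_k$.

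Now suppose $\sum_{i=1}^m \theta(f_i)v_i = 0$ with $f_i \in \calr_k$. Applying $\rho(\partial)^\alpha$ for an arbitrary $|\alpha|=k$ yields $\sum_i \theta(\partial^\alpha(f_i))v_i = 0$. Since $\partial^\alpha(f_i) \in \calr_0 = \bbf$ is a scalar, this reads $\sum_i \partial^\alpha(f_i) v_i = 0$, and linear independence of $\{v_1,\ldots,v_m\}$ forces $\partial^\alpha(f_i)=0$ for every $\alpha$ with $|\alpha|=k$ and every $i$. Writing $f_i = \sum_{|\alpha|=k} a_{i,\alpha} x^\alpha$, one has $\partial^\alpha(f_i) = \alpha!\,a_{i,\alpha}$, whence $a_{i,\alpha}=0$ for all $i,\alpha$ and so $f_i = 0$. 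This proves (1.2). The only point requiring care is confirming that $\ggg_{-1}$ contains the full set of $\partial_i$'s in the $S(n)$ and $H(n)$ cases, which is exactly where the uniform statement across the three vector-field algebras rests; the rest is formal manipulation using (LC-1) and the depth hypothesis.
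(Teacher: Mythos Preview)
Your proof is correct and follows essentially the same approach as the paper: both rely on the identity $\rho(\partial_j)\theta(f)v=\theta(\partial_j(f))v$ for $v\in M_0$, deduced from (LC-1) together with $\rho(\partial_j)M_0=0$. The paper packages (1.2) as an induction on $k$ (one derivative at a time), whereas you apply the full operator $\rho(\partial)^\alpha$ with $|\alpha|=k$ in one stroke; this is just the induction unrolled, not a genuinely different argument.
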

		
\begin{proof}
The statements of the first part and of (1.1) are clearly true.  We only need to show (1.2).  We will do that by induction on the degree $k$.
			
If $k=0$, (1.2) is obviously true.  Suppose $k>0$ and the statement (1.2) for degree $k-1$ is  already proved to be true.
Before the arguments, let us keep in mind that
			\begin{align}\label{eq: depth}
				\rho(\partial_i)M_0=0, \,\forall\,1\leq i\leq n
			\end{align}
			because the depth of $M$  is $0$.
			For any  $f_i\in \calr_k$   $(i=1,2,\ldots, m)$, as long as
			$$\sum_{i=1}^m \theta(f_i)v_i=0,$$
			then for any $q\in \{1,2,\ldots,m\}$ we have
			\begin{align*}
				0&=\rho(\partial_q)\sum_{i=1}^m \theta(f_i)v_i\cr
				&=\sum_{i=1}^m (\theta(f_i)(\rho(\partial_q)v_i)+\theta(\partial_q(f_i))v_i)\cr
				&=\sum_{i=1}^m\theta(\partial_q(f_i))v_i.
			\end{align*}
			The last equation is due to (\ref{eq: depth}).
			Thus $\sum_{i=1}^m\theta(\partial_q(f_i))v_i=0$. Note that $\theta(\partial_q(f_i))\in \calr_{k-1}$.
			By the inductive hypothesis, we already have $\partial_q(f_i)=0$ for all $f_i$, $i=1,\ldots,m$. When $q$ runs through $\{1,\ldots, n\}$, it is deduced that all $\partial_q(f_i)=0$ for $q=1,\ldots,n$. Hence all $f_i$ are constants,  and fall in $\calr_0$. This implies that $f_i\in\calr_0\cap\calr_k=0$ for any $i$ by the assumption that $k>0$. The proof is completed.
		\end{proof}
		
			Furthermore, from the above lemma, we have
		\begin{prop}\label{prop: v lambda irr} Any $\cv(\lambda)$ is an irreducible Lie-Cartan module.
		\end{prop}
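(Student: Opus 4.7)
The plan is to show that any nonzero Lie-Cartan submodule $N \subseteq \cv(\lambda)$ must equal $\cv(\lambda)$. Since $N$ sits in $\co$, it is $\mathbb{Z}$-graded, so let $d$ be the minimal integer with $N_d \ne 0$. I will first show $d=0$ by a lowering argument using the partial derivations $\partial_j$, then use irreducibility of $L^0(\lambda)$ over $\ggg_0$ together with $\calr$-stability to conclude $N = \cv(\lambda)$.

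For the first step, I record that each $\partial_j$ lies in $\ggg_{-1}$ for $\ggg = W(n), S(n), H(n)$: for $W(n)$ this is by definition, for $S(n)$ we have $\partial_i = D_{ij}(x_j)$ for $j \ne i$, and for $H(n)$ we have $D_H(x_j) = \sigma(j)\partial_{j'}$. The key computation is that $\rho(\partial_j)$ acts on $\cv(\lambda) = \calr \otimes L^0(\lambda)$ by $\partial_j \otimes \id$. For $W(n)$ this is immediate from (\ref{W(n)-module structure}) since $f_i = \delta_{ij}$ has $\partial_k(f_i)=0$. For $S(n)$ and $H(n)$, one either checks directly from (\ref{S(n)-module}) and (\ref{H(n)-module}) that after substituting $\alpha = \epsilon_s$ all correction terms vanish (each involves factors like $\alpha_i\alpha_j$ or $\alpha_j(\alpha_j-1)$ that are zero for $|\alpha|=1$), or, more conceptually, uses the Lie-Cartan compatibility (LC-1): since $\rho(\partial_j)(1\otimes v)=0$ (as $\partial_j \in \ggg_{-1}$ and $\cv(\lambda)$ has depth $0$), one has
\[ \rho(\partial_j)\theta(x^\alpha)(1\otimes v) = \theta(\partial_j(x^\alpha))(1\otimes v) + \theta(x^\alpha)\rho(\partial_j)(1\otimes v) = \partial_j(x^\alpha)\otimes v,\]
and this extends by $\calr$-linearity.

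With this in hand, suppose $d > 0$ and take $0 \ne w \in N_d$, written as $w = \sum_i g_i \otimes v_i$ with $v_i \in L^0(\lambda)$ linearly independent and $g_i \in \calr_d$. Then $\rho(\partial_j)(w) = \sum_i \partial_j(g_i)\otimes v_i$ lies in $N_{d-1} = 0$, which forces $\partial_j(g_i) = 0$ for every $i, j$; hence every $g_i$ is a constant, contradicting $d > 0$. Thus $d = 0$, so $N_0$ is a nonzero $\ggg_0$-submodule of $\cv(\lambda)_0 = L^0(\lambda)$; irreducibility of $L^0(\lambda)$ over $\ggg_0$ gives $N_0 = L^0(\lambda)$. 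Finally, $\calr$-stability yields $N \supseteq \theta(\calr) N_0 = \calr \cdot L^0(\lambda) = \cv(\lambda)$.

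The only slightly delicate step is verifying that $\rho(\partial_j)$ acts as $\partial_j \otimes \id$ on $\cv(\lambda)$ for the non-$W$ types, but this is either a one-line calculation from the explicit formulas or an immediate consequence of (LC-1) together with the depth-$0$ observation; no genuine obstacle remains. One can alternatively phrase the final two steps through Lemma \ref{lem: basic prop}: once $d(N)=0$ and $N_0 = L^0(\lambda)$, the free $\calr$-generation of $\cv(\lambda)_k$ by $\cv(\lambda)_0$ (the rank-$m$ freeness in that lemma applied to $\cv(\lambda)$ itself) forces $N_k = \cv(\lambda)_k$ for all $k \ge 0$.
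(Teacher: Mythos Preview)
Your proof is correct and follows essentially the same route as the paper, which simply says ``It follows directly from the definition of $\cv(\lambda)$ and Lemma \ref{lem: basic prop}.''  Your lowering argument with $\rho(\partial_j)$ is exactly the mechanism behind the proof of Lemma \ref{lem: basic prop}, and your concluding steps (irreducibility of $L^0(\lambda)$ over $\ggg_0$ plus $\theta(\calr)L^0(\lambda)=\cv(\lambda)$) are what the paper means by ``the definition of $\cv(\lambda)$''; you have simply unpacked the one-line citation into the explicit argument.
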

		
		\begin{proof} It follows directly from the definition of $\cv(\lambda)$ and  Lemma \ref{lem: basic prop}.
		\end{proof}
		
	Lemma \ref{lem: basic prop} also  implies that $\theta(\calr)M_0$ is isomorphic to $\calr\otimes M_0$ as an $\calr$-module.

			\begin{prop}\label{prop: unique lc v} Let $\ggg=X(n)$ for $X\in\{W,S,H\}$. There is only one structure of Lie-Cartan module on $\cv(\lambda)$ for any $\lambda\in \Lambda^+$, up to equivalences.
			\end{prop}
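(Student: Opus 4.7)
The plan is to show that any two Lie-Cartan structures $(\rho_i,\theta_i)$, $i=1,2$, on the underlying graded vector space of $\cv(\lambda)=\calr\otimes L^0(\lambda)$ are intertwined by a graded vector space automorphism conjugating both the $\ggg$- and $\calr$-actions. The pivot is that the Lie-Cartan data are entirely determined by the $\ggg_0$-module structure on the depth space $M_0=\cv(\lambda)_0\cong L^0(\lambda)$.

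First, I would apply Lemma \ref{lem: basic prop} to each $\theta_i$: the subspace $\theta_i(\calr)M_0$ is free over $\calr$ with basis any basis of $M_0$, and a dimension comparison in each graded piece forces $\theta_i(\calr)M_0=\cv(\lambda)$. Hence each $(\cv(\lambda),\theta_i)$ is isomorphic to the free $\calr$-module $\calr\otimes M_0$, so there exists a graded vector space automorphism $\phi$ of $\cv(\lambda)$, restricting to the identity on $M_0$, with $\phi\circ\theta_1(f)=\theta_2(f)\circ\phi$ for every $f\in\calr$.

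Next, I would show that each $\rho_i$ is determined by its restriction to $M_0$. Repeated use of (LC-1) on elements of the form $\theta_i(f)v$ with $v\in M_0$ expresses $\rho_i(X)$ globally in terms of $\rho_i(X)|_{M_0}$ and $\theta_i$. On the depth space $M_0$ one has $\rho_i(\ggg_{-1})M_0=0$ by the depth-zero hypothesis; $\rho_i|_{\ggg_0}$ acts by the prescribed $L^0(\lambda)$-structure; and for $X\in\ggg_{\geq 1}$, $v\in M_0$, the element $\rho_i(X)v\in M_{\geq 1}=\theta_i(\calr_{\geq 1})M_0$ is recovered inductively from
\[
\rho_i(\partial_j)\,\rho_i(X)v=\rho_i([\partial_j,X])v,
\]
since iterated brackets $[\partial^{\beta},X]$ eventually land in $\ggg_0$ by transitivity of the grading. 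Consequently the coefficients of $\rho_i(X)v$ in the $\calr$-basis depend only on the $\ggg_0$-action on $M_0$, and the resulting formulas match the prolongation formulas of \S\ref{sec: prolong real}.

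Combining the two steps, since $\phi|_{M_0}$ is the identity and hence $\ggg_0$-linear, and both $\rho_i$ arise from the same universal recipe applied to this common $\ggg_0$-action on $M_0$, one obtains $\phi\circ\rho_1(X)=\rho_2(X)\circ\phi$ for every $X\in\ggg$, yielding the required equivalence. The main obstacle is the consistency of the inductive reconstruction: one must verify that the operator $\rho(X)$ built via (LC-1) and the Lie brackets $[\partial^{\beta},X]$ is independent of the order in which the brackets are computed. This amounts to re-deriving the uniqueness of Skryabin's prolongation, and follows by a Jacobi-identity computation on a PBW-style graded basis of $\cv(\lambda)$.
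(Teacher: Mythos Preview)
Your approach is essentially the same as the paper's. Both invoke Lemma~\ref{lem: basic prop} to see that $\cv(\lambda)$ is free over $\calr$ of rank $\dim L^0(\lambda)$ (which, after the dimension count you indicate, lets one normalize the $\calr$-action to the standard one), and then pin down the $\ggg$-action on elements $1\otimes v$ inductively by applying iterated $\partial_j$'s and using (LC-1) together with $\rho(\partial_j)|_{M_0}=0$. The paper organizes the induction slightly differently---it writes $\rho'(x_{i_1}\cdots x_{i_m}\partial_k)(1\otimes v)$ in the free $\calr$-basis and applies $\partial_{q_1}\cdots\partial_{q_{m-1}}$ all at once to read off each coefficient---but the mechanism is identical to your recursive use of $\rho_i(\partial_j)\rho_i(X)v=\rho_i([\partial_j,X])v$.

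One remark: the ``main obstacle'' you flag at the end, about consistency of the inductive reconstruction, is not actually an issue here. You are not \emph{building} $\rho$ from scratch and then needing to verify it is well-defined; you are given a Lie-Cartan structure $(\rho_i,\theta_i)$ and showing that its values are forced. Since $\rho_i$ already exists, any two ways of computing $\rho_i(X)v$ via different strings of $\partial$'s must agree automatically, so there is no Jacobi-type check to perform. The paper accordingly makes no mention of this point.
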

			
			\begin{proof} Suppose that $\calv$ is endowed with a Lie-Cartan module structure with structure mappings $(\rho',\theta')$.
 We will show that $(\rho',\theta')$ is not other than the one defined by  (\ref{W(n)-module structure}) when $\ggg=W(n)$, (\ref{S(n)-module}) when $\ggg=S(n)$, and (\ref{H(n)-module}) when $\ggg=H(n)$.
 Note that $\cv(\lambda)$ is a free $\calr$ module by Lemma \ref{lem: basic prop}. Hence,  it is sufficient to show that the pair $(\rho',\theta')$ satisfies the above equations at $1 \otimes v$ for $v\in \cv(\lambda)$.
 We will verify  this by induction on each $\mathfrak{g}_i$-action on $1 \otimes v$.
The verification will proceed in different cases.

(1) $\ggg=W(n)$.
						
There is nothing to prove for $i =-1 $ and $i=0$. Assume that the induction principal holds for $i \leq m-2$ and some $m \geq 2$. Now let $i =m-1$. Since the action of $\ggg$ is compatible with the grading of $\cv(\lambda)$, we can assume that
			\begin{align}\label{align2.4.1}
							\rho'(x_{i_1}\dots x_{i_m}\partial_{k}) (1 \otimes v)= \displaystyle{\sum_{1\leq s_1,\dots,s_{m-1}\leq n}} x_{s_1}\dots x_{s_{m-1}} \otimes v_{s_1\dots s_{m-1}}.
			\end{align}
						
						Now consider the action of $\partial_{q_1}\dots \partial_{q_{m-1}}$ on the both sides of (\ref{align2.4.1}), for $1\leq q_i \leq n$, $1 \leq i \leq m-1$. Then the right hand side (RHS for short) of (\ref{align2.4.1}) becomes $1 \otimes v_{q_1\dots q_{m-1}}	$. On the other hand,
						
						\begin{equation*}
							\text{LHS of (\ref{align2.4.1})}=\begin{cases}
								1 \otimes \xi(x_{i_r}\partial_k)\cdot v,&\text{if}\,\, \{q_1,\dots ,q_{m-1}\} = \{i_1, \dots, i_{m}\}\setminus \{i_r\},\cr
								0,&\text{otherwise}.
							\end{cases}
						\end{equation*}
						Therefore comparing LHS and RHS, we have $1 \otimes v_{i_1 \dots \widehat{i_r} \dots {i_m}} = 1\otimes \xi(x_{i_r}\partial_k)\cdot v$,
						and all other $1 \otimes v_{s_1\dots s_{m-1}}$ = $0$, where $\widehat{i_r}$ represents that $i_r$ does not appear in that expression. Hence, we have
						{
						 $$\rho'(x_{i_1}\dots x_{i_m}\partial_{k}) (1 \otimes v)= \displaystyle{\sum_{r=1}^{m}}\partial_{i_r}(x_{i_1}\dots x_{i_m})\otimes \xi(x_{i_r}\partial_{k})\cdot v=\rho(x_{i_1}\dots x_{i_m}\partial_{k}) (1 \otimes v)$$ }
					 as desired.
						
						(2) $\ggg=S(n), H(n)$.
						
						Note that the action of $S(n), H(n)$ is the restriction of that of $W(n)$. Similar arguments as in the case $W(n)$ yield the desired action of $S(n), H(n)$.
						
						This completes the proof.				
			\end{proof}

		\subsection{}
		We can consider the category $\lc$ of Lie-Cartan modules whose objects come from all Lie-Cartan modules. The homomorphism space between any $M$ and $N$ in $\lc$ is defined by
		$$ \Hom_{\lc}(M,N)=\{\varphi\in \Hom_\co(M,N)\mid \varphi(\theta_M(f)m)=\theta_N(f)\varphi(m),\forall m\in M, f\in\calr\}$$
		where $\theta_M$ and $\theta_N$ denote the $\calr$-module structure on $M$ and $N$, respectively.  Similarly, we can talk about the subcategories $\lc_{d}$ and $\lc_{\geq d}$ of $\lc$.  Note that there is a category equivalence by the shift of depth between  $\lc_d$ and $\lc_0$. So the arguments in the sequel can be only focused on $\lc_0$. The following fact is clear.
		\begin{lem} The category $\lc$ is abelian.
		\end{lem}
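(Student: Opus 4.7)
The plan is to verify the axioms of an abelian category directly by constructing all relevant data inside the ambient category $\co$ (which is abelian as a category of graded $U(\ggg)$-modules stable under subquotients and finite direct sums) and then checking that the $\calr$-module structure is inherited at each stage and satisfies (LC-1) and (LC-2). The forgetful functor $\lc\to\co$ will be faithful and exact, and the abelian structure on $\lc$ will be uniquely determined by pulling back the one on $\co$.

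First I would establish that $\lc$ is additive. The zero module carries the unique compatible Lie-Cartan structure and serves as a zero object. For a finite direct sum $M\oplus N$ of Lie-Cartan modules, the componentwise $\ggg$- and $\calr$-actions clearly satisfy (LC-1) and (LC-2), yielding biproducts. Finally $\Hom_{\lc}(M,N)$ is closed under pointwise addition since the sum of two maps that are both $\ggg$- and $\calr$-linear is again of the same kind.

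Next I would construct kernels and cokernels. Given a morphism $\varphi:(M,\rho_M,\theta_M)\to (N,\rho_N,\theta_N)$ in $\lc$, the underlying vector space $\ker\varphi$ is stable under $\rho_M$ (because $\varphi$ is $\ggg$-linear) and under $\theta_M$ (because $\varphi$ is $\calr$-linear). The restrictions then automatically satisfy (LC-1) and (LC-2), and $\ker\varphi\in \co$ since $\co$ admits kernels; the canonical inclusion is a morphism in $\lc$ with the required universal property. By the same token $\im\varphi\subset N$ is a Lie-Cartan submodule, and the quotient $N/\im\varphi$ inherits well-defined compatible $\rho_N$- and $\theta_N$-actions, serving as a cokernel.

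Finally I would check that every monic in $\lc$ is a kernel and every epic is a cokernel. If $\varphi$ is monic in $\lc$, it is injective (apply the universal property of $\ker\varphi$ against the zero map), and the comparison morphism $\coker(\ker\varphi)\to\ker(\coker\varphi)$ is bijective because it already is in $\co$; the set-theoretic inverse is $\calr$-linear because the $\calr$-module structures on both sides are induced from $M$ and $N$ respectively via the same bijection, so the inverse lives in $\lc$. The epic case is symmetric. The only point requiring mild care is this last compatibility of the inverse with $\theta$, but it is forced by the fact that all intervening $\calr$-actions are subquotients of those on $M$ and $N$; no genuine obstacle arises, and the entire verification is essentially bookkeeping on top of the abelianness of $\co$.
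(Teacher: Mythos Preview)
Your proposal is correct and is precisely the routine verification the paper has in mind: the paper gives no proof at all, simply declaring the fact ``clear,'' so your explicit check that kernels, cokernels, and biproducts computed in $\co$ inherit the $\calr$-action and satisfy (LC-1)--(LC-2) is exactly what is being tacitly invoked.
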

%
%
		%
		
		 In the subsequent arguments, we often call morphisms in $\lc$  Lie-Cartan module homomorphisms, and call sub-objects of an object $M$ in $\lc$  Lie-Cartan submodules of $M$.

\subsection{Finite-depth subcategory \texorpdfstring{$\lc^+$}{} }		

Furthermore, we introduce a more natural subcategory.
\begin{defn}\label{defn: positive lc} Define the subcategory $\lc^+$ of $\lc$ whose objects have  finite depths. This is to say, for any object $M\in \lc^+$ there is an integer $d\in \bbz$ such that $M=\sum_{i\geq d}M_i$.
\end{defn}

Obviously, $\lc^+$ is a full subcategory of $\lc.$  Denote by $\lc_f$ the subcategory of $\lc$ whose objects are finitely generated over $U(\ggg)$. Then $\lc_f$ becomes a full subcategory of $\lc^+$ by lemma \ref{lem: depth}.

	\subsection{} Similar to Definition \ref{defn: positive lc}, we can define the subcategory $\co^+$ of $\co$ whose objects admit finite depth. The following result gives a way to obtain a Lie-Cartan module from a module in $\co^+$.
	
		\begin{lem}\label{lem: M_R}
 Let { $M\in \co^+$. Then $M_{\calr}:=\calr\otimes_\bbf M$} is endowed with a Lie-Cartan module structure by usual $\ggg$-action $\rho$ and   regular (left multiplication) $R$-action $\theta$ on the tensor module.
\end{lem}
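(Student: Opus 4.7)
The plan is to endow $M_\calr=\calr\otimes_\bbf M$ with the diagonal $\ggg$-action
$$\rho(X)(g\otimes m)=X(g)\otimes m+g\otimes Xm,\qquad X\in\ggg,\ g\in\calr,\ m\in M,$$
and the left multiplication $\calr$-action $\theta(f)(g\otimes m)=fg\otimes m$, and then verify each of the defining axioms of a Lie-Cartan module in turn. That $\rho$ is a genuine $\ggg$-module structure is a standard consequence of the fact that $\calr$ and $M$ are both $\ggg$-modules and $U(\ggg)$ is a (cocommutative) Hopf algebra; equivalently, a short Leibniz-type check using $[X,Y](g)=X(Y(g))-Y(X(g))$ on $\calr$.

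Next, I would verify the compatibility axiom (LC-1) by direct computation:
\begin{align*}
[\rho(X),\theta(f)](g\otimes m)&=X(fg)\otimes m+fg\otimes Xm-f X(g)\otimes m-fg\otimes Xm\\
&=X(f)g\otimes m=\theta(X(f))(g\otimes m),
\end{align*}
where the Leibniz rule $X(fg)=X(f)g+f X(g)$ for $X$ acting on $\calr$ is used. For the grading axiom (LC-2), I would equip $M_\calr$ with the total grading $(M_\calr)_k=\sum_{i+j=k}\calr_i\otimes M_j$; both $\rho(\ggg_i)$ and $\theta(\calr_i)$ then shift degree by $i$ since this is already true on each tensor factor, which gives (LC-2) and the grading part of membership in $\co$.

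The remaining point is to check that $M_\calr\in\co$. Admissibility is where the assumption $M\in\co^+$ enters: if $M$ has depth $d$, then $M_j=0$ for $j<d$, so for each fixed $k$ the sum $(M_\calr)_k=\bigoplus_{i+j=k,\ j\geq d}\calr_i\otimes M_j$ is a finite direct sum of finite-dimensional spaces, hence finite-dimensional. For $U(\sfp)$-local finiteness, given $g\otimes m$, both $U(\sfp)g\subset\calr$ and $U(\sfp)m\subset M$ are finite-dimensional (the former because $\ggg_{-1}$ strictly decreases polynomial degree and $\ggg_0\cong\mathfrak{gl}(n)$ preserves each $\calr_i$, the latter by the hypothesis $M\in\co$), and the cocommutativity of $U(\sfp)$ yields $U(\sfp)(g\otimes m)\subset U(\sfp)g\otimes U(\sfp)m$, which is finite-dimensional. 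Finally, $\hhh$-semisimplicity is inherited from each factor. I do not anticipate a serious obstacle; the one point that deserves care is confirming that the finite-depth hypothesis $M\in\co^+$ is genuinely used to guarantee $\dim(M_\calr)_k<\infty$, as without it infinitely many summands $\calr_i\otimes M_{k-i}$ would contribute.
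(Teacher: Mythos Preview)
Your proof is correct and follows essentially the same approach as the paper: define the diagonal $\ggg$-action and left-multiplication $\calr$-action, then verify (LC-1) by the Leibniz computation. In fact you are more thorough than the paper, which simply asserts that $M_\calr\in\co_{\geq d}$ ``by a direct verification'' without spelling out admissibility, $U(\sfp)$-local finiteness, or $\hhh$-semisimplicity; your explicit use of the finite-depth hypothesis to bound $\dim(M_\calr)_k$ is exactly the point. One minor slip: you write $\ggg_0\cong\mathfrak{gl}(n)$, which holds only for $W(n)$; for $S(n)$ and $H(n)$ one has $\mathfrak{sl}(n)$ and $\mathfrak{sp}(n)$ respectively, though your conclusion that $\ggg_0$ preserves each $\calr_i$ remains valid since $\ggg_0$ consists of degree-zero derivations in all three cases.
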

		
		\begin{proof} By definition, $M_\calr$ obviously becomes an $\calr$-module. The $\ggg$-module structure mapping $\rho$ on $M_\calr$ is by definition as below
\begin{align}\label{eq: define g-module}
\rho(X) (f\otimes v)=X(f)\otimes v+f\otimes X. v,\,\,\forall X\in \ggg, f\in \calr, v\in M.
\end{align}
		By a direct verification,  it is easily known that (\ref{eq: define g-module}) defines a $\ggg$-module structure on $M_\calr$, and  $M_\calr$ is still in { $\co_{\geq d}$.}

Now check that $M_\calr$ becomes a Lie-Cartan module. For any $X\in\ggg$, $f\in\calr$ and $r\otimes v\in M_\calr$, we have
\begin{align*}
\rho(X)\theta(f)(r\otimes v)=&\rho(X)(fr\otimes v)\cr
=& X(fr)\otimes v+fr\otimes X.v  \cr
=&X(f).r\otimes v+fX(r)\otimes v+fr\otimes X.v  \cr
=&\theta(X(f))(r\otimes v)+\theta(f)\rho(X)(r\otimes v).
\end{align*}
This completes the proof.
\end{proof}

\begin{rems}
(1) When $M=\bbf$ is the trivial $U(\ggg)$-module, it is readily known that $\bbf_{\calr}\cong\cv(0)$ by (\ref{eq: define g-module}).
				
(2) For the $U(\ggg)$-module $\cv(\lambda)$ with $\lambda\in {\Lambda^+}$, it follows from Proposition \ref{co-standard iso} that  $\cv(\lambda)\cong
\nabla(\lambda)\in\mathcal{O}_{\geq 0}$. Then the following mapping
$$	\aligned \psi:\quad \cv(\lambda)_{\calr}:=\calr\otimes_\bbf \cv(\lambda) &\longrightarrow
				\cv(\lambda)\cr
				r_1\otimes (r_2\otimes v) &\longmapsto r_1r_2\otimes v,\,\,\, \forall\, r_1, r_2\in\calr, v\in L^0(\lambda)
				\endaligned	$$
gives an epimorphism from $\cv(\lambda)_{\calr}$ to $\cv(\lambda)$.

(3) Naturally, we can regard $\cv(\lambda)$ and $\Delta(\lambda)_\calr$ as co-standard objects  and standard objects in $\lc$. More explanation on this can be read out  from {Corollary \ref{cor: ext exp}(2).}

(4) { From now on, we often denote by $V_\calr$ the base change of an $\bbf$-vector space $V$ through $\calr$, i.e.
$$ V_\calr:= \calr\otimes_\bbf V.$$
 }
\end{rems}

			\subsection{The depth functor}\label{sec: dep fun}
			Suppose $M\in\lc^+$ with depth $d$. Then we write $M=\sum_{i\geq d}M_i$. Set $\mathscr{D}(M)$ to be the  Lie-Cartan submodule of $M$ generated by $M_d$. 
In the following, we set  $\text{Ann}_{\ggg_{-1}}(M)=\{v\in M\mid \ggg_{-1}.M=0\}$.

			
			\begin{lem} The above $\mathscr{D}$ defines a functor from $\lc^+$ to $\lc^+$.
			\end{lem}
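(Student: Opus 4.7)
The plan is to verify three items in turn: (i) $\mathscr{D}(M)\in \lc^+$ whenever $M\in \lc^+$; (ii) every morphism $f\colon M\to N$ in $\lc^+$ restricts to a Lie-Cartan morphism $\mathscr{D}(f)\colon \mathscr{D}(M)\to \mathscr{D}(N)$; (iii) these assignments respect identities and composition.

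For (i), intersections of Lie-Cartan submodules are again Lie-Cartan submodules (both $\rho$ and $\theta$ restrict), so the smallest such submodule of $M$ containing $M_d$ is well defined. Concretely, $\mathscr{D}(M)=U(\ggg)\theta(\calr)M_d$, which by (LC-1) also equals $\theta(\calr)U(\ggg)M_d$. The grading compatibility (LC-2) is inherited from $M$ and the depth of $\mathscr{D}(M)$ equals $d$, so $\mathscr{D}(M)\in \lc^+$.

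For (ii), I would set $\mathscr{D}(f):=f|_{\mathscr{D}(M)}$ and verify the crucial inclusion $f(\mathscr{D}(M))\subseteq \mathscr{D}(N)$. Write $d=d(M)$ and $d'=d(N)$. Since $f$ preserves the $\bbz$-grading, $f(M_d)\subseteq N_d$. When $d<d'$, we have $N_d=0$, so the inclusion is trivial; when $d=d'$, $N_d$ is precisely the generating space of $\mathscr{D}(N)$, and the intertwining property of $f$ gives $f(\mathscr{D}(M))=U(\ggg)\theta(\calr)f(M_d)\subseteq U(\ggg)\theta(\calr)N_{d'}=\mathscr{D}(N)$. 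Item (iii) is then immediate, because $\mathscr{D}$ acts as restriction on morphisms: $\mathscr{D}(\mathrm{id}_M)=\mathrm{id}_{\mathscr{D}(M)}$ and $\mathscr{D}(g\circ f)=(g\circ f)|_{\mathscr{D}(M)}=\mathscr{D}(g)\circ \mathscr{D}(f)$.

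The main obstacle is the remaining case $d(M)>d(N)$, where $f(M_d)\subseteq N_d$ might a priori lie outside $\mathscr{D}(N)$ (for instance, $N$ could split as a direct sum of Lie-Cartan submodules of different depths with $f$ landing in the higher-depth piece). My plan is to use that $M_d\subseteq \text{Ann}_{\ggg_{-1}}(M)$---since $\ggg_{-1}$ lowers degree and $M$ has no components below $d$---hence $f(M_d)\subseteq \text{Ann}_{\ggg_{-1}}(N)\cap N_d$, and then to combine this with the prolongation realization $\cv(\lambda)\cong \calr\otimes L^0(\lambda)$ from Proposition \ref{co-standard iso} and the irreducibility of $\cv(\lambda)$ in $\lc$ (Proposition \ref{prop: v lambda irr}) to show that any such $\ggg_{-1}$-annihilated element in higher degree is automatically generated from the depth space of $N$. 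Once this is granted, the restriction $f|_{\mathscr{D}(M)}$ genuinely takes values in $\mathscr{D}(N)$, and the functoriality axioms follow routinely.
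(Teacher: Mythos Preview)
The paper's own proof is a single sentence: ``It immediately follows from the definitions of $\lc^+$ and $\mathscr{D}$.'' You have gone further and correctly isolated the problematic case $d(M)>d(N)$, even sketching the right kind of counterexample. The difficulty is that this counterexample is not merely \emph{a priori}: it is genuine, and your proposed repair cannot succeed.

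Concretely, take $N={}^{0}\hspace{-2pt}\cv(\lambda)\oplus{}^{1}\hspace{-2pt}\cv(\mu)$ and let $f$ be the inclusion of the second summand $M={}^{1}\hspace{-2pt}\cv(\mu)$. Then $d(N)=0$, $N_0=L^0(\lambda)$, and $\mathscr{D}(N)$ is the Lie--Cartan submodule generated by $L^0(\lambda)$, namely ${}^{0}\hspace{-2pt}\cv(\lambda)$. On the other hand $\mathscr{D}(M)=M$ and $f(\mathscr{D}(M))={}^{1}\hspace{-2pt}\cv(\mu)\not\subseteq{}^{0}\hspace{-2pt}\cv(\lambda)=\mathscr{D}(N)$. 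So $f$ does not restrict to a morphism $\mathscr{D}(M)\to\mathscr{D}(N)$. Your intended step---that every $\ggg_{-1}$-annihilated element of $N$ in degree $>d(N)$ lies in $\mathscr{D}(N)$---fails here: $L^0(\mu)\subset\text{Ann}_{\ggg_{-1}}(N)\cap N_1$ but $L^0(\mu)\not\subset\mathscr{D}(N)$. That claim is exactly Claim~1 in the proof of Lemma~\ref{lem: radical}, and it is established there only under the extra hypothesis~($*$) that $N$ has no nonzero Lie--Cartan submodule of depth strictly greater than $d(N)$; without~($*$) it is false, and neither Proposition~\ref{co-standard iso} nor Proposition~\ref{prop: v lambda irr} can rescue it.

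In short: you have uncovered a real gap that the paper's one-line proof does not address. Read literally as asserting functoriality on all of $\lc^+$, the lemma appears to be incorrect. The paper only ever uses $\mathscr{D}$ at the level of objects (in Lemma~\ref{lem: radical}), so nothing downstream is affected; but for the lemma as stated, neither the paper's argument nor your proposed fix goes through.
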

			
			\begin{proof} It immediately follows from the definitions of $\lc^+$ and $\mathscr{D}$.
			\end{proof}

			\subsection{Radicals in \texorpdfstring{$\lc^+$}{} }\label{sec: radical}
			\begin{lem}\label{lem: radical} Let $M$ be a Lie-Cartan module of depth $d=d(M)$. The following statements hold.
				\begin{itemize}
					\item[(1)] There is a unique maximal Lie-Cartan  submodule $N$ satisfying that its $d$-th homogeneous space is zero and it contains any  submodule $N'$ of $M$ with $d(N')>d$ if $N'$ exists.

\item[(2)]   $M=\mathscr{D}(M)+ N$, where the functor $\mathscr{D}$ is  introduced in \S\ref{sec: dep fun}.
					
					\item[(3)] $M/N$ is semisimple in $\lc$, which is isomorphic to a direct sum of $\cv(\lambda_i)$ for some $\lambda_i\in \Lambda^+$, $i=1,\ldots,t$.
					
				\end{itemize}
			\end{lem}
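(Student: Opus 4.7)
For (1), set $N$ to be the sum of all Lie-Cartan submodules $N'\subseteq M$ with $N'_d=0$. Since $(N_1+N_2)_d=(N_1)_d+(N_2)_d$, this family is closed under sums, so $N$ itself is a Lie-Cartan submodule with $N_d=0$ and manifestly the unique maximal such. Any submodule of depth strictly greater than $d$ has trivial depth-$d$ component and hence lies in the family, hence inside $N$. Statements (2) and (3) will be handled jointly: once the isomorphism $M/N\cong\bigoplus_i\cv(\lambda_i)$ is established, (2) is immediate because $\mathscr{D}(M)\supseteq\theta(\calr)M_d$ surjects onto the semisimple quotient.

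For (3), start from a decomposition $M_d=\bigoplus_i V_i$ into $\ggg_0$-irreducibles $V_i\cong L^0(\lambda_i)$, available since $\ggg_0$ is reductive and $M_d$ is a finite-dimensional weight module. For each $i$ the $\calr$-linear map $\phi_i\colon\cv(\lambda_i)\to M$, $f\otimes v\mapsto\theta(f)v$, is an injection onto $\theta(\calr)V_i$ by Lemma \ref{lem: basic prop}, and summing gives $\sum_i\phi_i$ injective onto $\theta(\calr)M_d$. The delicate step is to check that the composite with $\pi\colon M\twoheadrightarrow M/N$ is $\ggg$-linear. Comparing the prolongation formulas (\ref{W(n)-module structure})--(\ref{H(n)-module}) evaluated at $1\otimes v$ with the native action on $M$, this reduces, in the $W(n)$ case, to the congruence
\begin{equation*}
\rho_M(x^{\alpha}\partial_l)v\ \equiv\ \sum_j\alpha_j\,\theta(x^{\alpha-\epsilon_j})\,\rho_M(x_j\partial_l)v\pmod{N},\qquad v\in V_i,\ |\alpha|\geq 2,
\end{equation*}
with analogous congruences for $S(n)$ and $H(n)$ from (\ref{S(n)-module}), (\ref{H(n)-module}). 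The approach is induction on $|\alpha|$: the bracket identity $[\partial_j,x^{\alpha}\partial_l]=\alpha_j x^{\alpha-\epsilon_j}\partial_l$ together with $\rho_M(\ggg_{-1})v=0$ and the inductive hypothesis show the defect $w$ between the two sides is annihilated by $\rho_M(\ggg_{-1})$ modulo $N$. To deduce $w\in N$, argue that the Lie-Cartan submodule $N_w$ generated by $w$ satisfies $(N_w)_d=0$: a normal-ordering in the naturalized algebra $\calr\natural U(\ggg)$, pushing $\ggg_{-1}$-letters to the right and exploiting $\rho_M(\ggg_{-1})w\subseteq N$, yields $N_w\subseteq\theta(\calr)\rho_M(U(\ggg_{\geq 0}))w+N$, whose graded components live at degrees $\geq d+|\alpha|-1>d$, so $(N_w)_d\subseteq N_d=0$ and $N_w\subseteq N$ by (1). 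Injectivity of the induced $\phi\colon\bigoplus_i\cv(\lambda_i)\to M/N$ follows because any alleged nonzero kernel element $\sum_r\theta(f_r)v_r\in N$ with $v_r$ a basis of $M_d$ can be hit by a word $\rho(\partial_1)^{\beta_1}\cdots\rho(\partial_n)^{\beta_n}$ to produce a nonzero element of $M_d\cap N=0$.

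For surjectivity of $\phi$, note that the maximality of $N$ in $M$ forces the analogous depth-$d$-null submodule of $\bar M:=M/N$ to vanish. Prove by induction on $k\geq 0$ that $\bar M_{d+k}=\theta(\calr_k)M_d$. For $k\geq 1$ and $c\in\bar M_{d+k}$, the inductive hypothesis writes $\rho(\partial_j)c=\sum_i\theta(g_i^{(j)})v_i$ with $g_i^{(j)}\in\calr_{k-1}$; the commutativity $[\partial_j,\partial_{j'}]=0$ combined with the $\calr$-freeness of $\theta(\calr)M_d$ inside $\bar M$ (Lemma \ref{lem: basic prop}) forces $\partial_{j'}g_i^{(j)}=\partial_j g_i^{(j')}$ for all $i,j,j'$. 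The polynomial Poincar\'e lemma---concretely, Euler's identity supplies $h_i:=\tfrac{1}{k}\sum_j x_j g_i^{(j)}\in\calr_k$ with $\partial_j h_i=g_i^{(j)}$---then produces $\tilde c:=\sum_i\theta(h_i)v_i\in\theta(\calr_k)M_d$ for which $c-\tilde c$ is annihilated by $\ggg_{-1}$ at degree $d+k>d$. The Lie-Cartan submodule of $\bar M$ generated by $c-\tilde c$ therefore has depth strictly greater than $d$, hence lies in the vanishing depth-$d$-null submodule; so $c=\tilde c\in\theta(\calr_k)M_d$.

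The principal obstacle is expected to be the normal-ordering step used to control $(N_w)_d$: rewriting elements of $\calr\natural U(\ggg)\cdot w$ as $\theta(\calr)\rho_M(U(\ggg_{\geq 0}))w$ modulo $N$ requires a clean PBW-type splitting in $\calr\natural U(\ggg)$ together with the inductive fact $\rho_M(\ggg_{-1})w\subseteq N$, and the bookkeeping has to run uniformly for $W(n)$, $S(n)$ and $H(n)$. Fortunately $(X(n))_{-1}=\mathrm{span}_{\bbf}\{\partial_1,\ldots,\partial_n\}$ for all $X\in\{W,S,H\}$, so the Poincar\'e-lemma step treats the three cases simultaneously without new ideas.
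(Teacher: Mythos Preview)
Your proposal is correct, and the skeleton—define $N$ as the sum of all Lie-Cartan submodules with trivial $d$-th component, pass to $\bar M=M/N$, show $\bar M=\theta(\calr)M_d$ by an integration argument, then identify the result with $\bigoplus_i\cv(\lambda_i)$—is exactly the paper's. The two proofs diverge in execution at two points.

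First, for surjectivity the paper integrates one variable at a time: given $m_i\in\bar M_i$, it finds the first $j$ with $\partial_j m_i\ne 0$, uses the inductive hypothesis to write $\partial_j m_i$ in $\calr_{i-d-1}M_d$, then subtracts an explicit $x_j$-antiderivative and repeats. Your Euler-identity version does all variables simultaneously and is cleaner, but both amount to the same Poincar\'e lemma for polynomials.

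Second, and more substantively, for the identification with $\cv(\lambda_i)$ the paper takes a modular route: once Claim~2 gives $M^{(i)}=\calr L^0(\lambda_i)$ as a free $\calr$-module, the uniqueness of the Lie-Cartan structure on $\cv(\lambda)$ (Proposition~\ref{prop: unique lc v}) forces the $\ggg$-action to be the prolongation action. You instead rebuild that uniqueness argument in place, proving the congruence $\rho_M(x^\alpha\partial_l)v\equiv\sum_j\alpha_j\theta(x^{\alpha-\epsilon_j})\rho_M(x_j\partial_l)v\pmod N$ by induction on $|\alpha|$ and a PBW normal-ordering to show the defect generates a submodule with trivial $d$-component. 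This is sound (indeed your normal-ordering step, showing $w\in N$ whenever $\rho_M(\ggg_{-1})w\subseteq N$ and $\deg w>d$, is exactly the paper's Claim~1 applied in $\bar M$), but it duplicates work already packaged in Proposition~\ref{prop: unique lc v}. The paper's route is shorter once that proposition is available; yours is self-contained and makes the mechanism more visible.
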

			
			\begin{proof}
{ (1) The sum of any two Lie-Cartan submodules with depth bigger than $d$ is still a Lie-Cartan submodule with depth bigger than $d$. Then the unique maximal Lie-Cartan  submodule $N$ is just the sum of all Lie-Cartan submodules with depth bigger than $d$. (1) follows.

(2) and (3) We only need to show the statements under the following assumption.

 (*) $M$ does not contain any nonzero Lie-Cartan submodule of depth bigger than $d$.

Indeed, suppose (2) and (3) hold under the assumption (*). Then in general case, note that $M/N$ satisfies the assumption (*), (3) holds and  we have $$(\mathscr{D}(M)+N)/N=\mathscr{D}(M/N)=M/N.$$
This implies that $M=\mathscr{D}(M)+N$, i.e, (2) holds.

In the subsequent discussion, we always suppose that the assumption (*) holds. We have the following claims.

\textbf{Claim 1:} $\cala(M):=\text{Ann}_{\ggg_{-1}}(M)$ must coincide with $M_d$.

Obviously, $M_d\subset \cala(M)$. We show $M_d\supset \cala(M)$ by contradiction. Suppose there exits nonzero vector $v\in \cala(M)\backslash M_d$. We can further suppose $v\in M_{j}$ with $j>d$. Then $U(\ggg)v=U(\ggg_{\geq0})v\subset M_{\geq j}$. Note that the action of $\calr$ preserves ascending of gradings. So  the Lie-Cartan submodule of $M$ generated by $v$ admits the depth bigger than $d$, which contradicts the assumption in the beginning of the arguments.

\textbf{Claim 2:} $M_i\subseteq \calr_{i-d}M_d$ for any $i\geq d$.

We use induction on $i$ to prove the above claim. Our arguments proceed in different steps.

(i) First it is obvious that Claim 2 holds for $i=d$.

(ii) For any nonzero $m_1\in M_{d+1}$, thanks to the assumption (*), there exists $j\leq n$ such that
$D_1m_1=D_2m_1=\cdots =D_{j-1}m_1=0$, while $0\neq D_jm_1:=m_{1j}\in M_d$. Set $m_1^{\prime}=m_1-x_jm_{1j}\in M_{d+1}$. Then $D_1m_1^{\prime}=D_2m_1^{\prime}=\cdots =D_{j}m_1^{\prime}=0$. Proceed with this procedure yields that we can find $m_{1j},\cdots, m_{1n}\in M_d$ such that $D_i(m_1-\sum_{k=j}^nx_km_{1k})=0$ for any $1\leq i\leq n$. Hence, $m_1-\sum_{k=j}^nx_km_{1k}=0$ by the assumption (*), i.e., $m_1=\sum_{k=j}^nx_km_{1k}\in \mathscr{D}(M)_{d+1}$. Consequently, $M_{d+1}\subseteq\calr_1 M_{d}$.

(iii) Suppose $i\geq d+2$ and  $M_{i-1}\subseteq \calr_{i-d-1}M_d$. We need to show that $M_i\subseteq \calr_{i-d}M_d$.
For any nonzero $m_i\in M_i$, thanks to the assumption (*), there exists $j\leq n$ such that
$D_1m_i=D_2m_i=\cdots =D_{j-1}m_i=0$, while $0\neq D_jm_i:=m_{ij}\in M_{i-1}\subseteq \calr_{i-d-1}M_d$. Then
$D_1m_{ij}=D_2m_{ij}=\cdots =D_{j-1}m_{ij}=0$. Hence it follows from Lemma \ref{lem: basic prop} that we can write $m_{ij}$ as
$$m_{ij}=\sum_{k=1}^s\sum_{l=0}^{i-d-1}x_j^lf_{kl}m_i^{(k)}$$
where $$f_{kl}\in\bbf[x_{j+1},\cdots, x_n]\,\text{with}\,\deg(f_{kl})=i-d-1-l, 1\leq k\leq s, 0\leq l\leq i-d-1,$$
and $m_i^{(1)},\cdots, m_i^{(s)}\in M_d$ are linearly independent.
Set
$$m_{ij}^{\prime}:=\sum_{k=1}^s\sum_{l=0}^{i-d-1}\frac{1}{l+1}x_j^{l+1}f_{kl}m_i^{(k)}\in \calr_{i-d}M_d,$$
and $m_i^{\prime}:=m_i-m_{ij}^{\prime}\in M_i$. Then $D_1m_i^{\prime}=D_2m_i^{\prime}=\cdots =D_{j}m_i^{\prime}=0$.
Proceed with this procedure yields that we can find $m_{ij}^{\prime},\cdots, m_{in}^{\prime}\in \calr_{i-d}M_d$ such that $D_i(m_i-\sum_{k=j}^nm_{ik}^{\prime})=0$ for any $1\leq i\leq n$. Hence, $m_i-\sum_{k=j}^nm_{ik}^{\prime}=0$ by the assumption (*), i.e., $m_i=\sum_{k=j}^nm_{ik}^{\prime}\in \calr_{i-d}M_d$. Consequently, $M_i\subseteq \calr_{i-d}M_d$, as desired.

{ Summing up, we have $M_i=\mathscr{D}(M)_i$ for any $i\geq d$. This completes the proof of part (2).

For the part (3),  still keep  the assumption (*) in mind.} The arguments proceed in several steps.
			
(i) Note that $\ggg_0$ is a reductive Lie algebra, and that by the axioms of $\co$, $M$ is $\hhh$-semisimple, consequently $M_d$ is $\hhh$-semisimple (see \cite[Theorem 20.5.10]{TY}). Hence $M_d$ is completely reducible over $U(\ggg_0)$, i.e. $M_d=\bigoplus_{\text{finitely many }\lambda_k\in \Lambda^+} L^0(\lambda_k)$ (see for example, \cite[Theorem 20.5.10]{TY}).
				
(ii) Next we consider 
the Lie-Cartan submodule $M^{(i)}$ generated by $L^0(\lambda_i)$. 
				
%
%
%

(iii) According to Claim 2, $M^{(i)}=\calr L^0(\lambda_i)$, which coincides with the simple Lie-Cartan module $\calv(\lambda_i)$ by Lemma \ref{lem: basic prop} and Proposition \ref{prop: v lambda irr}. Hence Part (3) follows.

{We accomplish  the whole proof.}
%
}
\end{proof}


{{The submodule  $N$ in Lemma \ref{lem: radical}}} plays a role of the radical of $M$ in $\lc^+,$ and is denoted by $\text{Rad}_{\lc}(M)$.
%
 The following observation is clear.

\begin{lem}\label{lem: deltaR cv} For $\lambda\in \Lambda^+$.
$\Delta(\lambda)_\calr\slash \text{Rad}_{\lc}(\Delta(\lambda)_\calr)\cong \calv(\lambda)$.
\end{lem}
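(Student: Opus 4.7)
The plan is to apply Lemma \ref{lem: radical} directly to $M = \Delta(\lambda)_\calr$. The only real input one needs is a clean identification of its depth space, after which the statement falls out of a $\ggg_0$-module count.

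First I would verify that $\Delta(\lambda)_\calr$ lies in $\lc^+$ with depth $0$ and depth space canonically isomorphic to $L^0(\lambda)$. Since $\Delta(\lambda)=U(\ggg)\otimes_{U(\sfp)}L^0(\lambda)$ has depth $0$ with $\Delta(\lambda)_0=L^0(\lambda)$, and $\calr$ is concentrated in non-negative degrees with $\calr_0=\bbf$, the total grading on $\calr\otimes_\bbf\Delta(\lambda)$ is
$$(\Delta(\lambda)_\calr)_k=\bigoplus_{i+j=k}\calr_i\otimes\Delta(\lambda)_j,$$
which vanishes for $k<0$ and reduces to $\calr_0\otimes\Delta(\lambda)_0\cong L^0(\lambda)$ in degree $0$. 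Since both the $\ggg$-action (Lemma \ref{lem: M_R}) and the left-multiplication $\calr$-action preserve this total grading by construction, the Lie-Cartan module $\Delta(\lambda)_\calr$ has depth $0$ with depth space $L^0(\lambda)$.

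Next I would invoke Lemma \ref{lem: radical}. Writing $N:=\text{Rad}_{\lc}(\Delta(\lambda)_\calr)$, part (1) of that lemma guarantees $N_0=0$, so the canonical surjection $\Delta(\lambda)_\calr\twoheadrightarrow \Delta(\lambda)_\calr/N$ induces an isomorphism of $\ggg_0$-modules on the depth-$0$ pieces. By part (3) we have a decomposition $\Delta(\lambda)_\calr/N\cong \bigoplus_{i=1}^t\calv(\lambda_i)$ for some $\lambda_1,\ldots,\lambda_t\in\Lambda^+$, whose depth-$0$ piece is $\bigoplus_{i=1}^tL^0(\lambda_i)$. Comparing the two descriptions of this depth-$0$ piece, irreducibility of $L^0(\lambda)$ as a $\ggg_0$-module forces $t=1$ and $\lambda_1=\lambda$, giving the asserted isomorphism $\Delta(\lambda)_\calr/N\cong \calv(\lambda)$.

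I do not anticipate any substantial obstacle here: once Lemma \ref{lem: radical} is available, the argument reduces to bookkeeping with graded pieces together with the irreducibility of $L^0(\lambda)$. The most delicate point is really conceptual rather than computational, namely that Lemma \ref{lem: radical}(1) does impose $N_0=0$ by construction; this is precisely what makes the depth-$0$ comparison rigid enough to single out exactly the summand $\calv(\lambda)$.
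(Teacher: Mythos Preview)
Your proof is correct and follows essentially the same approach as the paper: identify the depth space of $\Delta(\lambda)_\calr$ as $L^0(\lambda)$ and then apply Lemma \ref{lem: radical}(3). The paper's proof is the two-line version of what you wrote; your explicit comparison of depth-$0$ pieces simply makes precise why the semisimple quotient in Lemma \ref{lem: radical}(3) consists of a single summand, which the paper leaves implicit.
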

\begin{proof} Obviously, $\Delta(\lambda)_\calr$ is a Lie-Cartan module generated by its depth space $1\otimes L^0(\lambda)$. What remains is to apply Lemma \ref{lem: radical}(3).
\end{proof}

		\section{{{Classification of irreducible Lie-Cartan modules}}}
	{In this section, we classify all irreducible Lie-Cartan modules by some  arguments different from what  we made in Lemma \ref{lem: radical} (see Remark \ref{rem: diff way for clfy}). }	
	\subsection{} Let $\ggg=X(n)$ with $X\in\{W,S,H\}$ be a Lie algebra of vector fields in this section. We investigate simple  objects of $\lc$ and study the classification of isomorphism classes of irreducible Lie-Cartan modules. We first have the following observation.

\begin{lem} Any simple objects of $\lc$ belongs to $\lc^+$.
\end{lem}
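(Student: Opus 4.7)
My plan is to locate, inside any simple $M \in \lc$, a nonzero Lie-Cartan submodule of finite depth; simplicity will then force $M$ itself to lie in $\lc^+$.

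First I would pick a nonzero homogeneous vector $v \in M_j$ (using that $M$ is $\mathbb{Z}$-graded). By the $\co$-axioms the subspace $U(\sfp)v$ is finite-dimensional, so $\{\partial^{\alpha} v : \alpha \in \mathbb{N}^n\}$ has only finitely many nonzero elements, each sitting in a distinct graded piece $M_{j-|\alpha|}$. Choosing $\alpha$ of maximal length with $\partial^{\alpha} v \neq 0$ produces a nonzero homogeneous vector $w \in M_d \cap \ker \rho(\ggg_{-1})$ with $d = j - |\alpha|$. (The homogeneity is automatic since $\ggg_{-1}$ is graded, so each homogeneous component of $\partial^{\alpha}v$ is separately annihilated.)

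The key step is to show that the Lie-Cartan submodule $N$ of $M$ generated by $w$ satisfies $N \subseteq M_{\geq d}$. The compatibility (LC-1) lets me normalize every element of $N$ as a sum of expressions $\theta(f)\rho(u)w$ with $f \in \calr$ and $u \in U(\ggg)$, so that $N = \theta(\calr)\cdot U(\ggg)w$. Next I would use $\rho(\ggg_{-1})w = 0$ to upgrade this to $U(\ggg)w = U(\ggg_{\geq 0})w$: by PBW it suffices to verify $\rho(\partial_i)U(\ggg_{\geq 0})w \subseteq U(\ggg_{\geq 0})w$, which follows by commuting $\partial_i$ to the right through words $\xi_1\cdots\xi_k \in U(\ggg_{\geq 0})$ via $[\partial_i,\ggg_j]\subseteq \ggg_{j-1}$ and invoking $\rho(\partial_i)w=0$ once $\partial_i$ reaches $w$. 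Then $U(\ggg_{\geq 0})w \subseteq M_{\geq d}$ and, since $\theta(\calr) = \theta(\calr_{\geq 0})$ preserves or raises the grading, $N \subseteq M_{\geq d}$, so $N \in \lc^+$. Simplicity of $M$ forces $M = N$, yielding $M \in \lc^+$.

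The main bookkeeping burden is the identity $U(\ggg)w = U(\ggg_{\geq 0})w$: while conceptually clear once $\ggg_{-1}w=0$ is in hand, it requires nested inductions on PBW-length and on $\sum \deg \xi_j$, and in the lowest-degree case one must observe that a bracket $[\partial_i,\xi_0]$ with $\xi_0\in\ggg_0$ lands back in $\ggg_{-1}$ and so is re-annihilated on $w$ after one further commutation. Once this is in place, finite depth of $N$, and hence of $M$, is immediate.
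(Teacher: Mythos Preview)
Your proof is correct and follows essentially the same idea as the paper, which simply invokes Lemma~\ref{lem: depth}: a simple object is cyclic, and the proof of that lemma adapts verbatim once one observes that $\theta(\calr)$ never lowers the grading. Your ``main bookkeeping burden'' $U(\ggg)w = U(\ggg_{\geq 0})w$ is actually immediate: by PBW one has $U(\ggg) = U(\ggg_{\geq 0})\,U(\ggg_{-1})$, and $U(\ggg_{-1})w = \bbf w$ since $\ggg_{-1}w = 0$, so no nested induction is needed.
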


 \begin{proof} It follows from Lemma \ref{lem: depth}.
  \end{proof}

\subsection{Property of \texorpdfstring{$\ggg$}{}-module homomorphism extensions}
\begin{lem}\label{lem: ext}	(\textsc{Extension lemma})
{Let $M$ and $N$ be two Lie-Cartan modules and $\phi: M\rightarrow N$  be a $\ggg$-module homomorphism. Suppose that $M'$ is a $\ggg$-submodule  of $M$, with $M'=\sum_{i\geq d}M'_i$ satisfying $M'=U(\ggg)M'_d$ for $d=d(M')$. If $\phi$   satisfies the following condition

					\begin{align}\label{eq: condition}
						\sum_{i=1}^{m} u_if_iv_i=0 \text{  implies } \sum_{i=1}^{m} u_if_i\phi(v_i)=0
					 \text{ for }u_i\in U(\ggg), f_i\in\calr, v_i\in M'_d,
					\end{align}
	then $\phi|_{M'}$ can be extended to a Lie-Cartan module homomorphism $\Phi$ from $\widetilde{M'}$ to $N$ where  $\widetilde{M'}$ is Lie-Cartan submodule of $M$ generated by $M'$.}
	\end{lem}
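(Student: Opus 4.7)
The plan is to describe $\widetilde{M'}$ explicitly in terms of a spanning set tailored to the hypothesis, define $\Phi$ on that spanning set, and then use the hypothesis (\ref{eq: condition}) to secure well-definedness; the compatibility with both actions will follow formally from the Lie--Cartan axiom (LC-1).

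First I would establish the normal-form description
$$\widetilde{M'} \;=\; \Bigl\{\, \sum_{i=1}^{m} \rho(u_i)\theta(f_i)v_i \;\Big|\; m\in\mathbb{N},\, u_i\in U(\ggg),\, f_i\in\calr,\, v_i\in M'_d \,\Bigr\}.$$
The right-hand side clearly lies in $\widetilde{M'}$ since $M'_d\subset M'$. For the reverse inclusion it suffices to verify that the right-hand side is closed under both actions: closure under $\theta$ is immediate from $\theta(g)\theta(f)=\theta(gf)$, and closure under $\rho$ follows because $\rho(X)\rho(u_i)\theta(f_i)v_i=\rho(Xu_i)\theta(f_i)v_i$ is already of the required form. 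Since it contains all of $U(\ggg)M'_d=M'$, it must equal $\widetilde{M'}$.

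Next I would define
$$\Phi\!\Bigl(\sum_{i=1}^{m}\rho(u_i)\theta(f_i)v_i\Bigr) \;:=\; \sum_{i=1}^{m}\rho_N(u_i)\theta_N(f_i)\phi(v_i).$$
The crux of the lemma is the well-definedness of this formula, and this is precisely what (\ref{eq: condition}) supplies: if two presentations of the same element of $\widetilde{M'}$ are subtracted, one obtains a relation $\sum u_i f_i v_i = 0$, whence by hypothesis $\sum u_i f_i \phi(v_i)=0$ in $N$, so the image is independent of the presentation. That $\Phi$ extends $\phi|_{M'}$ is automatic: for $w=\sum\rho(u_i)v_i\in M'=U(\ggg)M'_d$ (the case $f_i=1$), we have $\Phi(w)=\sum\rho_N(u_i)\phi(v_i)=\phi(w)$ because $\phi$ is already a $\ggg$-homomorphism.

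It then remains to check that $\Phi$ is a Lie--Cartan homomorphism, i.e.\ commutes with both $\rho$ and $\theta$. Compatibility with $\rho$ is built into the definition. For compatibility with $\theta$, given $g\in\calr$, one must rewrite $\theta(g)\rho(u_i)\theta(f_i)v_i$ in the normal form of the spanning set above; this is done by an induction on the PBW-length of $u_i$, using (LC-1) at each step to swap $\theta(g)$ past $\rho(X)$ at the cost of an extra term $\theta(X(g))$. The identical rearrangement is valid in $N$ with the same coefficients, so applying $\Phi$ to both sides yields $\Phi(\theta(g)\cdot)=\theta_N(g)\Phi(\cdot)$ by comparing term by term.

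The only genuine obstacle is the well-definedness step, since an element of $\widetilde{M'}$ admits many presentations $\sum u_if_iv_i$; everything else reduces to the formal bookkeeping made possible by (LC-1). The hypothesis (\ref{eq: condition}) is engineered to be exactly the relation needed to pass from a $\ggg$-homomorphism on the generating subspace to a Lie--Cartan homomorphism on the whole $\calr$-$U(\ggg)$ enlargement.
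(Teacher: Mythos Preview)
Your proof is correct and follows essentially the same approach as the paper: describe $\widetilde{M'}$ as $U(\ggg)\,\calr\,M'_d$, define $\Phi$ by the obvious formula on this spanning set, and invoke condition~(\ref{eq: condition}) for well-definedness. The paper's proof is terser---it writes $\widetilde{M'}=U(\ggg_{>0})\calr M'_d$ and additionally remarks (via Lemma~\ref{lem: basic prop}) that $\calr M'_d$ is free over $\calr$---but the core mechanism is identical; your explicit check that $\Phi$ commutes with $\theta$ via a PBW induction on (LC-1) is more careful than what the paper spells out, and that same induction is what actually justifies your earlier claim that the spanning set is closed under $\theta$ (which is not quite ``immediate from $\theta(g)\theta(f)=\theta(gf)$'' when $u_i\neq 1$).
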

			
	\begin{proof} The assumption on $M'$ along with  the axioms of Lie-Cartan modules yield
				\begin{align}\label{eq: tilde M prime}
					\widetilde{M'}&=\calr M'\cr
					&=U(\ggg_{>0})\calr M'_d.
				\end{align}
	Note that the depth of $\widetilde{M'}$ is also equal to $d$, and that
	its depth space is exactly $M'_d$. By Lemma \ref{lem: basic prop},
	we see that $\calr M'_d$ is a free $\calr$-submodule in $\widetilde{M'}$. By the same reason, in the Lie-Cartan submodule generated by $\phi(M')$ we have that $\calr\phi(M'_d)$  { is zero or also a free submodule over $\calr$. We can extend $\phi$ to the desired Lie-Cartan module homomorphism $\Phi$ via:
					$$\Phi(X  fv)=X   f \phi(v)\,\text{ for }X \in\ggg, f\in\calr \text{ and }v\in M'_d,$$
					which is well-defined by (\ref{eq: condition}).}	
			\end{proof}

		{{	\subsection{Irreducible Lie-Cartan modules}
				\begin{lem}\label{lem: M_0 irr} Let $\ggg=X(n)$, $X\in\{W,S,H\}$.
					Suppose that $M=\sum_{i\geq0}M_i$ is an irreducible Lie-Cartan module. Then as a Lie-Cartan module, $M_0$ is a generated space. Furthermore, $M_0$ is an irreducible $\ggg_0$-module.
				\end{lem}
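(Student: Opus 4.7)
The plan is to prove the two assertions separately, and throughout rely on the identity $\widetilde{V} = \calr \cdot U(\ggg) \cdot V$ for the Lie-Cartan submodule $\widetilde{V}$ generated by a subspace $V \subseteq M$. This identity is a direct consequence of the compatibility axiom (LC-1), which yields $U(\ggg)\cdot \calr \subseteq \calr \cdot U(\ggg)$ by commuting a single $X\in\ggg$ past $f\in\calr$ through $X f = f X + X(f)$ and iterating; the inclusion $\calr \cdot U(\ggg) \cdot V \subseteq \widetilde{V}$ is immediate, while stability of the right-hand side under both $\ggg$- and $\calr$-action gives the reverse.

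For the generation assertion, since $M$ has depth $0$ we have $M_0 \neq 0$, so the Lie-Cartan submodule $\mathscr{D}(M)$ generated by $M_0$ is a nonzero Lie-Cartan submodule of the simple module $M$. Irreducibility forces $\mathscr{D}(M) = M$, so $M_0$ generates $M$ as a Lie-Cartan module.

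For the irreducibility of $M_0$ as a $\ggg_0$-module, first record that $M_0$ is finite-dimensional by axiom~(1) of Definition~\ref{category O}, is $\hhh$-semisimple by axiom~(3), and that $\ggg_0$ is reductive by~\eqref{grading 0}. Consequently $M_0$ is completely reducible over $\ggg_0$, since the center of $\ggg_0$ lies in $\hhh$ and therefore acts semisimply. Assume for contradiction $M_0 = V_1 \oplus V_2$ with both $V_i$ nonzero $\ggg_0$-submodules. Because $\ggg_{-1} V_i \subseteq M_{-1} = 0$, the PBW theorem applied to the vector-space decomposition $\ggg = \ggg_{\geq 1} \oplus \ggg_0 \oplus \ggg_{-1}$ collapses $U(\ggg) V_i$ to $U(\ggg_{\geq 1}) V_i$, a graded subspace concentrated in non-negative degrees whose degree-zero component is precisely $V_i$. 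Left-multiplying by $\calr$ only raises degree, so $(\widetilde{V_i})_0 = \calr_0 \cdot V_i = V_i$. Irreducibility of $M$ then forces $\widetilde{V_i} = M$, whence $V_i = (\widetilde{V_i})_0 = M_0$ for both $i = 1, 2$, contradicting $V_1 \cap V_2 = 0$ with $V_1, V_2 \neq 0$.

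The main obstacle is the depth-zero identification $(\widetilde{V_i})_0 = V_i$, which requires combining the Lie-Cartan compatibility (LC-1) with a PBW calculation respecting the grading of $\ggg$; the remaining ingredients---irreducibility of $M$, reductivity of $\ggg_0$, and complete reducibility of $M_0$---are standard once that identification is in hand.
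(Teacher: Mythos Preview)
Your proof is correct and follows essentially the same approach as the paper's: both arguments use irreducibility to conclude $M_0$ generates, invoke complete reducibility of $M_0$ over the reductive algebra $\ggg_0$, and then derive a contradiction by computing that the Lie-Cartan submodule generated by a $\ggg_0$-summand $V\subseteq M_0$ has degree-zero part exactly $V$ (via $\widetilde V=\theta(\calr)U(\ggg_{\geq1})V$ together with the grading axiom (LC-2)). Your PBW justification of $U(\ggg)V_i=U(\ggg_{\geq1})V_i$ is in fact spelled out more explicitly than in the paper, where this step is left implicit.
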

				
				\begin{proof} An irreducible Lie Cartan module can be generated by any nonzero element. In particular, it is generated by $M_0$. Keeping the axiom (LC-1) in mind, we can write $M=\theta(\calr) U(\ggg)M_0$.  Note that $M$ is by definition an object in $\co$, then $M_0$ is a finite dimensional $\ggg_0$-module with diagonal $\hhh$-action. 					Hence, by the classical Lie theory it is easily known that $M_0$ is completely reducible over $\ggg_0$, that is, as $\ggg_0$-modules, $M_0\cong\bigoplus_{i=1}^r L^0(\lambda_i)$ for some $\lambda_i\in \Lambda^+$.
					
					We continue to show that $r=1$. If otherwise, suppose $r>1$. By the above arguments,  for simplicity we write $M_0=\bigoplus_{i=1}^r L^0(\lambda_i)$.  Consider $M^{(i)}$, which  is the Lie-Cartan submodule generated by $L^0(\lambda_i)$ for $1\leq i\leq r$.  We will show that all $M^{(i)}$ are different submodules, which consequently contradicts the irreducible assumption of $M$.
					
					Actually, $M^{(i)}=\theta(\calr)U(\ggg_{>0})L^0(\lambda_i)$. By definition (the axiom (LC-2)), $\theta$ preserves the degree of $\calr$. Hence the depth space of $M^{(i)}$ is just $L^0(\lambda_i)$.  Hence  all $M^{(i)}$ are different.
					
					By summation, the proof is completed.
				\end{proof}

				\begin{prop}\label{prop: irr LC} Let  $\ggg=X(n)$, $X\in\{W,S,H\}$. Any irreducible Lie-Cartan module is isomorphic to $\cv(\lambda)$ for some $\lambda\in {\Lambda^+}$.
				\end{prop}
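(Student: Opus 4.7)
My plan is to produce an explicit nonzero Lie-Cartan homomorphism $\cv(\lambda)\to M$ and then invoke irreducibility on both sides. Using the shift functor I first reduce to the case $d(M)=0$; then Lemma \ref{lem: M_0 irr} supplies some $\lambda\in\Lambda^{+}$ together with a $\ggg_{0}$-isomorphism $\iota\colon L^{0}(\lambda)\xrightarrow{\sim} M_{0}$. Since $\ggg_{-1}$ annihilates both $L^{0}(\lambda)$ and $M_{0}$ (the latter because $d(M)=0$), $\iota$ is automatically an $\sfp$-homomorphism, so Frobenius reciprocity for $\Delta(\lambda)=U(\ggg)\otimes_{U(\sfp)}L^{0}(\lambda)$ extends it uniquely to a $\ggg$-module homomorphism $\phi_{0}\colon\Delta(\lambda)\to M$.

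Next I would lift $\phi_{0}$ to a Lie-Cartan morphism on the base change $\Delta(\lambda)_{\calr}=\calr\otimes_{\bbf}\Delta(\lambda)$, which is a Lie-Cartan module by Lemma \ref{lem: M_R}. Define
$$\Phi_{0}\colon\Delta(\lambda)_{\calr}\longrightarrow M,\qquad \Phi_{0}(f\otimes v):=\theta_{M}(f)\phi_{0}(v).$$
By construction $\Phi_{0}$ is $\calr$-linear, and a short computation using (LC-1) on both sides reduces $\Phi_{0}(X(f\otimes v))=X\Phi_{0}(f\otimes v)$ to the $\ggg$-linearity of $\phi_{0}$: the commutator $[X,\theta_{M}(f)]=\theta_{M}(X(f))$ inside $M$ exactly matches the derivation term $X(f)\otimes v$ appearing in the $\ggg$-action on $\Delta(\lambda)_{\calr}$. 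Thus $\Phi_{0}\in\Hom_{\lc}(\Delta(\lambda)_{\calr},M)$. (Equivalently, the Extension Lemma \ref{lem: ext} applied to $\phi_{0}$ along the $\ggg$-submodule $1\otimes\Delta(\lambda)\subset\Delta(\lambda)_{\calr}$ produces the same Lie-Cartan extension.) In exactly the same manner, starting from the canonical $\ggg$-homomorphism $\Delta(\lambda)\to\cv(\lambda)=\nabla(\lambda)$ coming from Frobenius for $\nabla(\lambda)$, one obtains a Lie-Cartan surjection $\bar\psi\colon\Delta(\lambda)_{\calr}\twoheadrightarrow\cv(\lambda)$ that restricts to an isomorphism on the depth-zero component.

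The crux is then to show $\ker\bar\psi\subseteq\ker\Phi_{0}$, so that $\Phi_{0}$ descends through $\bar\psi$. Because $\bar\psi$ is bijective in degree zero, $\ker\bar\psi$ lives entirely in strictly positive degrees; since $\Phi_{0}$ is graded, $\Phi_{0}(\ker\bar\psi)$ is a Lie-Cartan submodule of $M$ whose depth-zero component vanishes. Irreducibility of $M$ combined with $M_{0}\ne 0$ forces $\Phi_{0}(\ker\bar\psi)=0$. Hence $\Phi_{0}$ factors through a Lie-Cartan homomorphism $\bar\Phi_{0}\colon\cv(\lambda)\to M$ whose restriction to the depth space $L^{0}(\lambda)\subset\cv(\lambda)$ is $\iota$, and so is nonzero.

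Finally, since $\cv(\lambda)$ is irreducible in $\lc$ by Proposition \ref{prop: v lambda irr} and $M$ is irreducible by hypothesis, the nonzero Lie-Cartan morphism $\bar\Phi_{0}$ is an isomorphism, which completes the proof. Among the steps, I anticipate the Lie-Cartan linearity of $\Phi_{0}$ to be the most axiom-sensitive point: it is exactly where (LC-1) is invoked to trade the commutator $[X,\theta_{M}(f)]$ for the derivation term $\theta_{M}(X(f))$, so getting the Lie-Cartan structure on $\Delta(\lambda)_{\calr}$ pinned down correctly is essential. Once that is in place, the descent is a pure grading argument exploiting that $\bar\psi$ is an isomorphism in degree zero, and the final step is just Schur-type irreducibility.
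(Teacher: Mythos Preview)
Your argument is correct, but it takes a genuinely different route from the paper's. The paper produces a $\ggg$-homomorphism $\phi\colon M\to \cv(\lambda)$ in the \emph{other} direction (via the embedding $L(\lambda)\hookrightarrow\cv(\lambda)$, or equivalently via Frobenius reciprocity for $\nabla(\lambda)$), and then invokes the Extension Lemma~\ref{lem: ext} to upgrade $\phi$ to a Lie-Cartan homomorphism, checking condition~(\ref{eq: condition}) by noting that $\phi|_{M_d}$ is a scalar. You instead work from the projective side: the explicit formula $\Phi_0(f\otimes v)=\theta_M(f)\phi_0(v)$ gives a Lie-Cartan map $\Delta(\lambda)_\calr\to M$ directly, with no extension lemma needed, and your descent step (any Lie-Cartan submodule of the irreducible $M$ missing degree~$0$ must vanish) is exactly the observation that $\ker\bar\psi=\text{Rad}_{\lc}(\Delta(\lambda)_\calr)$ maps into a proper subobject. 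In effect you are anticipating Proposition~\ref{prop: compute ext}(1): your $\Phi_0$ is precisely the lift $\widetilde\psi$ constructed there to show $\Delta(\lambda)_\calr$ is a projective cover of $\cv(\lambda)$ in $\lc_{\geq 0}$. The paper's approach showcases the Extension Lemma as a reusable tool; yours is more self-contained and makes the role of $\Delta(\lambda)_\calr$ as a projective object transparent earlier.
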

				
				\begin{proof} Suppose that $M$ is an arbitrarily given irreducible Lie-Cartan module of depth $d=d(M)$, which can be expressed as $M=\sum_{i\geq d}M_i$. By irreducibility and the definition of Lie-Cartan modules,  $M=U(\ggg)\calr M_d$. According to Lemma \ref{lem: M_0 irr}, $M_d\cong L^0(\lambda)$ for some $\lambda\in \Lambda^+$. Note that $M$ is still an object of $\co$. As a $U(\ggg)$-module, $M$ still admits the depth space $M_d\cong L^0(\lambda)$. Hence by the same arguments as in Lemma \ref{lem: radical}, $M$ has an irreducible quotient $L(\lambda)$ in $\co$, which yields that there is a $U(\ggg)$-homomorphism $\phi: M\rightarrow L(\lambda)$.  {In particular, $\phi|_{M_d}: M_d\rightarrow L^0(\lambda)$ is a nonzero homomorphism of irreducible $\ggg_0$-modules, {{so that  $\phi|_{M_d}$ is a constant map by a nonzero scalar.}}} 
					Note that $L(\lambda)$ is contained in a Lie-Cartan module $\cv(\lambda)$ (see Example \ref{ex: Lie-Cartan containing}(2)), and  that $\cv(\lambda)$ is already known to be an irreducible Lie-Cartan module (see Proposition \ref{prop: v lambda irr}), and $\phi$  satisfies   the condition (\ref{eq: condition}) {{ because $\phi|_{M_d}$ is already known as a scalar map}}. By Extension Lemma \ref{lem: ext}, there is a Lie-Cartan module homomorphism from $M$ to $\cv(\lambda)$. The irreducibility of $M$ and $\cv(\lambda)$ ensures that $\Phi$ is an isomorphism of Lie-Cartan modules. The proof is completed.
				\end{proof}

{				
				
\begin{rem}\label{rem: diff way for clfy} The above proposition can be also   proved by application of Lemma \ref{lem: radical}.
\end{rem}				
}				
				
				\subsection{Classification of irreducible Lie-Cartan modules}
				
				We are now in the position to present the following classification theorem.

				\begin{thm}\label{thm: irre-mod}
					The set $\{{}^{d}\hspace{-2pt}\cv(\lambda)=\calr\otimes {}^dL^0(\lambda)\mid \lambda\in {\Lambda^+}, d\in \bbz\}$ exhausts all non-isomorphic irreducible  Lie-Cartan modules.
				\end{thm}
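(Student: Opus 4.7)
The plan is to assemble the classification from the two pieces of depth-$0$ theory already established, then to handle the general depth by the depth-shift functor, and finally to verify that the parametrization is non-redundant.

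In depth $0$, the two halves of the statement are essentially in hand: Proposition \ref{prop: v lambda irr} shows that each $\cv(\lambda)$ is an irreducible Lie-Cartan module, and Proposition \ref{prop: irr LC} shows conversely that every irreducible object of $\lc_{\geq 0}$ is isomorphic to some $\cv(\lambda)$ with $\lambda \in \Lambda^+$. Thus the irreducibles of $\lc_{\geq 0}$ are parametrized by $\Lambda^+$.

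Next I would pass to arbitrary depth. Any irreducible object $M$ of $\lc$ is singly generated over $U(\ggg)$, hence lies in $\lc^+$ by Lemma \ref{lem: depth}, so has a well-defined depth $d=d(M)$. The depth-shift functor $T_{d,0}$ restricts from $\co_{\geq d}$ to a category equivalence $\lc_{\geq d}\to\lc_{\geq 0}$; this restriction is legitimate because the compatibility axiom (LC-2) is preserved under a mere relabeling of gradings. Applying $T_{d,0}$ carries $M$ to an irreducible object of $\lc_{\geq 0}$, and hence to $\cv(\lambda)$ for some $\lambda\in\Lambda^+$; undoing the shift yields $M\cong {}^d\cv(\lambda)=\calr\otimes {}^dL^0(\lambda)$.

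Finally I would verify that different pairs $(\lambda,d)$ yield non-isomorphic modules. Any isomorphism in $\lc$ is in particular a morphism in $\co$, which by definition respects the $\bbz$-grading and so preserves depths; hence ${}^d\cv(\lambda)\cong{}^{d'}\cv(\mu)$ forces $d=d'$, after which the induced isomorphism between the depth spaces gives an isomorphism ${}^dL^0(\lambda)\cong{}^dL^0(\mu)$ of irreducible $\ggg_0$-modules, whence $\lambda=\mu$. The genuine substantive step — using the Extension Lemma \ref{lem: ext} to promote a $\ggg$-homomorphism to a Lie-Cartan homomorphism — has already been absorbed into Proposition \ref{prop: irr LC}, so no further real obstacle remains; once that proposition is accepted, the theorem reduces to bookkeeping via the shift functor and a depth-comparison argument.
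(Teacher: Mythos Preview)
Your proposal is correct and follows essentially the same route as the paper: both rely on Proposition~\ref{prop: irr LC} for the exhaustion and on comparing depth spaces for non-redundancy. The only cosmetic difference is that you explicitly invoke the shift functor $T_{d,0}$ to reduce to depth $0$, whereas the paper's Proposition~\ref{prop: irr LC} is already proved for arbitrary depth $d$, so the theorem follows from it immediately without an extra reduction step.
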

				\begin{proof}
					Note that ${}^d\cv(\lambda)_d=L^0(\lambda)$ for $\lambda\in {\Lambda^+}$. Hence, ${}^d\cv(\lambda)\cong {}^l\cv(\mu)$ as Lie-Cartan modules implies that  $d=l$, and $L^0(\lambda)\cong L^0(\mu)$ as $\ggg_0$-modules, yielding that $\lambda=\mu$. Consequently, the desired result follows directly from  Proposition \ref{prop: irr LC}.
				\end{proof}
		}}

\section{Homological properties of Lie-Cartan modules}

		{
			\subsection{Hom spaces}
			For preciseness and simplicity we denote $\cv(\lambda)$ by ${}^{d}\hspace{-2pt}\cv(\lambda)$ whenever $\cv(\lambda)$ ($\lambda\in \Lambda^+$) admits the depth $d$. Similarly, we denote $\Delta(\lambda)_\calr$ by ${}^{d}\hspace{-2pt}\Delta(\lambda)_\calr$ the standard module in $\lc$ with depth $d$.
			
			\begin{prop} The following statements hold.
				\begin{itemize}
					\item[(1)]
					\begin{equation*}
						\dim\Hom_\lc({}^{d_1}\hspace{-2pt}\cv(\lambda),{}^{d_2}\hspace{-2pt}\cv(\mu))=\begin{cases}
							1, &\text{ if  } d_1= d_2 \text{ and }\lambda= \mu;\cr
							0, &\text{ otherwise}.
						\end{cases}
					\end{equation*}
					
					\item[(2)]{
					\begin{equation*}
						\dim\Hom_\lc({}^{d_1}\hspace{-2pt}\Delta(\lambda)_\calr,
						{}^{d_2}\hspace{-2pt}\Delta(\mu)_\calr)=\begin{cases}
							m(\lambda), &\text{ if  } d_1\geq d_2;\cr
							0, &\text{ otherwise},
						\end{cases}
					\end{equation*}
					where
\[
\begin{array}{ccl}
	\cala_{d_1, d_2}&:=&\text{Ann}_{\ggg_{-1}}({}^{d_2}\hspace{-2pt}\Delta(\mu)_\calr)_{d_1}:=\{ v\in({}^{d_2}\hspace{-2pt}\Delta(\mu)_\calr)_{d_1}\mid \ggg_{-1}.v=0\}\\
	&~&~\\
	&=&\bigoplus_{\gamma\in\Lambda^+}m(\gamma){\hspace{2pt}}L^0(\gamma)
\end{array}
\]
is the semisimple decomposition of $\cala_{d_1, d_2}$ over $\ggg_0$.
					
					\item[(3)]
					 In particular, $\Hom_\lc({}^{d}\hspace{-2pt}\Delta(\lambda)_\calr,
						{}^{d}\hspace{-2pt}\Delta(\lambda)_\calr)\cong\bbf$.
						\item[(4)]
						\begin{align*}
							\dim\Hom_{\lc}({}^{d}\hspace{-2pt}\Delta(\lambda)_\calr, {}^{d'}\hspace{-2pt}\cv(\mu))
							=\begin{cases}1,   &\text {if }\lambda=\mu \text{ and }d=d';\cr
								0, &\text{otherwise}.
							\end{cases}
						\end{align*}
						
					}
				\end{itemize}
			\end{prop}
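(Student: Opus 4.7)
The plan is to handle part (1) immediately via irreducibility and Schur's lemma, and to reduce parts (2)--(4) to a single ``adjunction-type'' identification relating $\Hom_\lc({}^{d_1}\hspace{-2pt}\Delta(\lambda)_\calr,-)$ to $\ggg_0$-homomorphisms into the $\ggg_{-1}$-annihilator in degree $d_1$. For~(1), I would note that both source and target are irreducible in $\lc$ by Proposition~\ref{prop: v lambda irr} (and Theorem~\ref{thm: irre-mod}), so any nonzero morphism must be an isomorphism by Schur's lemma; since $\lc$-morphisms preserve the $\bbz$-grading, an isomorphism forces $d_1=d_2$, and comparing the depth spaces as $\ggg_0$-modules forces $\lambda=\mu$. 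When $(\lambda,d_1)=(\mu,d_2)$, the endomorphism ring of an irreducible over the algebraically closed field $\bbf$ equals $\bbf$.

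The key lemma I would prove for parts (2)--(4) is the functorial isomorphism
\[
\Hom_\lc\bigl({}^{d_1}\hspace{-2pt}\Delta(\lambda)_\calr,\,N\bigr)\;\cong\;\Hom_{\ggg_0}\bigl(L^0(\lambda),\,\text{Ann}_{\ggg_{-1}}(N)_{d_1}\bigr),\qquad N\in\lc.
\]
The forward map is restriction to the generating subspace $1\otimes 1\otimes L^0(\lambda)\subset {}^{d_1}\hspace{-2pt}\Delta(\lambda)_\calr$: by construction of $\Delta(\lambda)$ together with the action formula in Lemma~\ref{lem: M_R}, this subspace sits in degree $d_1$ and is killed by $\ggg_{-1}$, so its image lands in $\text{Ann}_{\ggg_{-1}}(N)_{d_1}$ and is $\ggg_0$-equivariant. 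For the inverse, given $\psi:L^0(\lambda)\to\text{Ann}_{\ggg_{-1}}(N)_{d_1}$, the trivial $\ggg_{-1}$-action on both sides makes $\psi$ automatically $\sfp$-equivariant; the universal property of $U(\ggg)\otimes_{U(\sfp)}(-)$ then extends it to a $\ggg$-module map $\tilde\psi:{}^{d_1}\hspace{-2pt}\Delta(\lambda)\to N$, and I would finally set $\Phi(r\otimes m)=\theta_N(r)\tilde\psi(m)$ on ${}^{d_1}\hspace{-2pt}\Delta(\lambda)_\calr=\calr\otimes{}^{d_1}\hspace{-2pt}\Delta(\lambda)$. The main obstacle will be verifying $\ggg$-equivariance of this last extension: this is precisely where the Lie-Cartan axiom (LC-1) on $N$, in the form $[X,\theta_N(r)]=\theta_N(X(r))$, must be paired with the derivation formula $X(r\otimes m)=X(r)\otimes m+r\otimes Xm$ from Lemma~\ref{lem: M_R}.

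With the lemma in hand, the three remaining parts reduce to short inspections of the target's $\ggg_{-1}$-annihilator. For~(2), I would take $N={}^{d_2}\hspace{-2pt}\Delta(\mu)_\calr$: complete reducibility of $\cala_{d_1,d_2}$ over the reductive algebra $\ggg_0$ together with Schur's lemma give $\dim\Hom_{\ggg_0}(L^0(\lambda),\cala_{d_1,d_2})=m(\lambda)$, and the target has no degree-$d_1$ component when $d_1<d_2$. For~(3), I would observe that the degree-$d$ piece of $\calr\otimes\Delta(\lambda)$ is $\calr_0\otimes\Delta(\lambda)_d\cong L^0(\lambda)$, on which $\ggg_{-1}$ acts trivially, so $\cala_{d,d}\cong L^0(\lambda)$ and $m(\lambda)=1$. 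For~(4), I would take $N={}^{d'}\hspace{-2pt}\cv(\mu)$: the explicit formula \eqref{W(n)-module structure} yields $\rho(\partial_i)(g\otimes v)=\partial_i(g)\otimes v$ (the $S(n)$ and $H(n)$ cases follow by restriction since $\ggg_{-1}=\mathrm{span}\{\partial_i\}$ in all three types), so $\text{Ann}_{\ggg_{-1}}({}^{d'}\hspace{-2pt}\cv(\mu))=\bbf\otimes L^0(\mu)$ is concentrated in degree $d'$; its degree-$d$ piece therefore vanishes unless $d=d'$, in which case a final Schur computation supplies the value $\delta_{\lambda,\mu}$.
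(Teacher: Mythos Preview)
Your approach is correct and matches the paper's strategy. The paper's proof of (2) is terser: it observes that ${}^{d_1}\hspace{-2pt}\Delta(\lambda)_\calr$ is generated as a Lie-Cartan module by its depth space $L^0(\lambda)$, so a morphism is determined by the image of this depth space, which (being annihilated by $\ggg_{-1}$ and sitting in degree $d_1$) must land in $\cala_{d_1,d_2}$; it then reads off $m(\lambda)$ from the $\ggg_0$-decomposition. Your functorial adjunction
\[
\Hom_\lc\bigl({}^{d_1}\hspace{-2pt}\Delta(\lambda)_\calr,\,N\bigr)\;\cong\;\Hom_{\ggg_0}\bigl(L^0(\lambda),\,\text{Ann}_{\ggg_{-1}}(N)_{d_1}\bigr)
\]
is exactly this idea, but packaged more cleanly and with the surjectivity (the extension step via $\Phi(r\otimes m)=\theta_N(r)\tilde\psi(m)$) made explicit; the paper leaves that direction implicit. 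Your treatments of (3) and (4) via the adjunction are likewise correct refinements of what the paper sketches by saying ``the proof of (4) is similar to (2)''.

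One small caution on (1): the clause ``the endomorphism ring of an irreducible over the algebraically closed field $\bbf$ equals $\bbf$'' is not automatic for infinite-dimensional irreducibles over an arbitrary algebraically closed field. Here you should instead use what you have already set up: a graded $\lc$-endomorphism of $\cv(\lambda)$ restricts to a $\ggg_0$-endomorphism of the \emph{finite-dimensional} depth space $L^0(\lambda)$, hence is a scalar there by ordinary Schur, and since that space generates $\cv(\lambda)$ as a Lie-Cartan module the scalar propagates to the whole module. With that adjustment your argument for (1) is complete and agrees with the paper's appeal to Theorem~\ref{thm: irre-mod}.
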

		}
		
		{
			\begin{proof} Part (1) is due to
 the classification theorem of irreducible Lie-Cartan modules (see Theorem \ref{thm: irre-mod}). Part (3) is a corollary to (2). The proof of (4) is similar to (2).
				
				We only need to prove (2). Note that the Lie-Cartan module ${}^{d_1}\hspace{-2pt}\Delta(\lambda)_\calr$ is generated by its depth space $L^0(\lambda)$. When $d_1<d_2$, then for any homomorphism $\phi:{}^{d_1}\hspace{-2pt}\Delta(\lambda)_\calr\rightarrow {}^{d_2}\hspace{-2pt}\Delta(\mu)_\calr$, the image of $\phi$ must be zero. Hence $\phi$ has to be zero.
				
				Suppose $d_1\geq d_2$. By the same reason as above, to describe such a homomorphism $\phi$ we only need to determine its image of the depth space ${}^{d_1}\hspace{-2pt}L^0(\lambda)$ in ${}^{d_2}\hspace{-2pt}\Delta(\mu)_\calr$. The image must be a subspace of  $\cala_{d_1,d_2}$, which is a $\ggg_0$-module, because the depth space ${}^{d_1}\hspace{-2pt}L^0(\lambda)$ is annihilated by $\ggg_{-1}$, and any homomorphism preserves the gradings.
				This yields that $\dim\Hom_\lc({}^{d_1}\hspace{-2pt}\Delta(\lambda)_\calr,
				{}^{d_2}\hspace{-2pt}\Delta(\mu)_\calr)=
				m(\lambda)$.
			\end{proof}

		}
		
		\subsection{} Now we introduce cohomology theory for the category $\lc$.
		In the following, we look at the extensions between some modules, especially between irreducible modules $\cv(\lambda)$ for $\lambda\in\Lambda^+$.  %
								
		\begin{prop}\label{prop: compute ext} Let $\lambda,\mu\in \Lambda^+$.  Suppose that $\cv(\lambda)$ and $\cv(\mu)$ admit the depth $d$.
The following statements hold.
			\begin{itemize}
				\item[(1)] $\cv(\lambda)$ admits a projective cover ${}^d\hspace{-2pt}\Delta(\lambda)_\calr$ in $\lc_{\geq d}$.
				\item[(2)] $\cv(\lambda)$ is an injective object in $\lc_{ \geq d}$.
				
				\item[(3)] $\Ext_{\lc_{\geq d}}^1({}^d\hspace{-2pt}\Delta(\lambda)_\calr, \cv(\mu))=0$ for any $\lambda,\mu$.
			\end{itemize}
		\end{prop}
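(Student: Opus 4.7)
By the shift-functor equivalence between $\lc_{\geq d}$ and $\lc_{\geq 0}$ recorded in Section~3.4, it suffices to handle $d=0$. My plan is to establish, for $N \in \lc_{\geq 0}$, the two natural adjunction isomorphisms
\[
\Hom_{\lc_{\geq 0}}(\Delta(\lambda)_\calr, N) \,\cong\, \Hom_{\ggg_0}(L^0(\lambda), N_0), \qquad \Hom_{\lc_{\geq 0}}(N, \cv(\lambda)) \,\cong\, \Hom_{\ggg_0}(N_0, L^0(\lambda)).
\]
The degree-$0$ component functor $N \mapsto N_0$ is exact on $\bbz$-graded modules, and on the finite-dimensional, $\ggg_0$-semisimple module $N_0$ the functors $\Hom_{\ggg_0}(L^0(\lambda),-)$ and $\Hom_{\ggg_0}(-,L^0(\lambda))$ are exact by Weyl's complete reducibility. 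So these isomorphisms will deliver the projectivity of $\Delta(\lambda)_\calr$ asserted in (1) and the injectivity of $\cv(\lambda)$ asserted in (2), while (3) follows at once from the projectivity in (1).

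For (1), I chain two standard adjunctions. The base-change construction of Lemma~\ref{lem: M_R} yields a tensor-hom adjunction $\Hom_{\lc}(\calr \otimes_\bbf M, N) \cong \Hom_{U(\ggg)}(M, N)$: a Lie-Cartan map out of $M_\calr$ is determined by its restriction to $1 \otimes M$ (automatically $\ggg$-equivariant since $X(1)=0$), and conversely any $\ggg$-map extends $\calr$-linearly, the Lie-Cartan relation $[\rho(X),\theta(f)] = \theta(X(f))$ ensuring $\ggg$-equivariance of the extension. Composing with Frobenius reciprocity $\Hom_{U(\ggg)}(\Delta(\lambda),N) \cong \Hom_{U(\sfp)}(L^0(\lambda),N)$, and using that $\ggg_{-1}$ acts trivially on $N_0$ because $N_{-1} = 0$, yields the first adjunction. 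The projective-cover claim then follows by combining Lemma~\ref{lem: deltaR cv}, which identifies the kernel of $\Delta(\lambda)_\calr \twoheadrightarrow \cv(\lambda)$ with $\text{Rad}_{\lc}(\Delta(\lambda)_\calr)$, with the uniqueness in Theorem~\ref{thm: irre-mod}: any simple quotient of $\Delta(\lambda)_\calr$ is some $\cv(\mu)$, and a nonzero morphism forces a nonzero $\ggg_0$-hom $L^0(\lambda) \to L^0(\mu)$, so $\mu=\lambda$ and the maximal Lie-Cartan submodule is unique, making the surjection essential.

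For (2), the restriction map $\phi \mapsto \phi|_{N_0}$ is well defined since $\cv(\lambda)_0 = L^0(\lambda)$, and its injectivity is already built into the $\ggg$-level coinduction adjunction $\Hom_{U(\ggg)}(N,\cv(\lambda)) \cong \Hom_{\ggg_0}(N_0,L^0(\lambda))$ coming from Proposition~\ref{co-standard iso}. The crux is surjectivity: given a $\ggg_0$-hom $\psi: N_0 \to L^0(\lambda)$, the resulting $\ggg$-extension is described under the identification $\cv(\lambda) = \calr \otimes L^0(\lambda)$ by the finite Taylor sum
\[
\tilde\psi(n) \,=\, \sum_{|\alpha|=k} \frac{x^\alpha}{\alpha!} \otimes \psi(\partial^\alpha n), \qquad n \in N_k,
\]
which is a finite sum because $\partial^\alpha n \in N_{k-|\alpha|}$ and $N$ has depth $\geq 0$. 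I expect the main obstacle to be verifying that this $\ggg$-extension is automatically $\calr$-equivariant when $N$ is Lie-Cartan. The verification should be a Leibniz-rule computation: iterating the Lie-Cartan relation $[\partial_i,\theta(f)] = \theta(\partial_i f)$ gives $\partial^\alpha(fn) = \sum_\beta \binom{\alpha}{\beta}\partial^\beta(f)\cdot \partial^{\alpha-\beta}(n)$; the constraint that $\partial^{\alpha-\beta}(n)$ lie in $N_0$ isolates the terms with $|\beta|=\deg f$, in which $\partial^\beta(f)$ is a scalar; and the outer Taylor identity $f = \sum_{|\beta|=\deg f}\partial^\beta(f)(0)\, x^\beta/\beta!$ reassembles everything into $\tilde\psi(fn) = f \cdot \tilde\psi(n)$. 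Once this is in hand the dual adjunction is proved, establishing (2); and (3) is a direct consequence of the projectivity obtained in (1).
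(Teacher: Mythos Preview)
Your argument is correct. For part (1) you and the paper do essentially the same thing: your tensor--hom adjunction $\Hom_{\lc}(\calr\otimes_\bbf M,N)\cong\Hom_{U(\ggg)}(M,N)$ is exactly the content of the paper's explicit lift $\tilde\psi(r\otimes v)=r\cdot\alpha(v)$, and both then invoke projectivity of $\Delta(\lambda)$ in $\mathcal{O}_{\geq 0}$ (equivalently, Frobenius reciprocity for the induction from $\sfp$). The projective-cover argument via Lemma~\ref{lem: deltaR cv} is likewise the same in spirit.

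For part (2) the routes genuinely diverge. The paper argues structurally: given a monomorphism $i:M\hookrightarrow N$ and a nonzero $\rho:M\to\cv(\lambda)$, it passes to the semisimple quotients $M/\text{Rad}_{\lc}(M)\hookrightarrow N/\text{Rad}_{\lc}(N)$ from Lemma~\ref{lem: radical} and uses that $\cv(\lambda)$ is a direct summand there to produce the extension. Your approach instead establishes the coinduction-type adjunction $\Hom_{\lc_{\geq 0}}(N,\cv(\lambda))\cong\Hom_{\ggg_0}(N_0,L^0(\lambda))$ directly, via the explicit Taylor formula and the Leibniz verification of $\calr$-linearity (which goes through because the depth constraint forces only the scalar terms $|\beta|=\deg f$ to survive). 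Your argument is more computational but self-contained and yields the stronger statement that $\cv(\lambda)$ is right adjoint to the degree-$0$ functor; the paper's argument is quicker once the radical machinery is in place, and has the advantage of being uniform across all three types $W,S,H$ without writing down formulas. Both are valid, and part (3) follows from (1) in either approach.
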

		
		\begin{proof}
			{
			(1)	Let us consider the following diagram in the category $\lc_{\geq d}$:				
				\begin{equation*}
					\begin{array}{c}
						\xymatrix{
							& \Delta(\lambda)_\calr\ar[d]^{\psi}&\\
							M \ar[r]_{\varphi} & N\ar[r]&0 .}
					\end{array}
				\end{equation*}
				where $\varphi$ ia an epimorphism. Observe that $ \psi_0: \Delta(\lambda) \to  \Delta(\lambda)_\calr$, $v \mapsto 1 \otimes v$, $v \in \Delta(\lambda)$ define a $\ggg$ module homomorphism. Hence $\psi \circ \psi_0$ is a $\ggg$-module homomorphism. Note that  $\Delta(\lambda)$ is a projective object in the category $\mathcal{O}_{\geq d}$ (see \cite[Lemma 4.1]{DSY}). Therefore we can find a $\ggg$-module homomorphism $\alpha:\Delta(\lambda) \rightarrow M$ such that the following diagram commute:			
				
				\begin{equation*}
					\begin{array}{c}
						\xymatrix{
							& \Delta(\lambda)\ar[dl]_{\alpha}\ar[d]^{\psi\circ\psi_0}& \\
							M \ar[r]_{\varphi} & N\ar[r]&0.}
					\end{array}
				\end{equation*}

				Now define a map $ \widetilde \psi:\Delta(\lambda)_\calr \to M$ by:
				$$\widetilde \psi(r \otimes v)=r.\alpha(v)  .$$
				It is easy to see that this is the desired map in the category $\lc_{\geq d}$ to make the following diagram commutes.
\begin{equation*}
					\begin{array}{c}
						\xymatrix{
							& \Delta(\lambda)_{\calr}\ar[dl]_{\tilde{\psi}}\ar[d]^{\psi} &\\
							M \ar[r]_{\varphi} & N\ar[r]&0.}
					\end{array}
				\end{equation*}
This proves that $\Delta(\lambda)_\calr$ is a projective object in the category
				$\lc_{\geq d}$.

{Suppose the depths of both $\Delta(\lambda)_\calr$ and $\cv(\lambda)$ are the same $d$.  By Lemma \ref{lem: deltaR cv}, there is a surjective Lie-Cartan module homomorphism $$\pi:\Delta(\lambda)_\calr\rightarrow \Delta(\lambda)_\calr\slash \text{Rad}_{\lc}(\Delta(\lambda)_\calr)\cong \cv(\lambda).$$
  Any proper submodule of $\Delta(\lambda)_\calr$ admits depth bigger than $d$. So there is no nonzero Lie-Cartan module homomorphism from any given  proper submodule of $\Delta(\lambda)_\calr$ to $\cv(\lambda)$, which yields that  $\pi$ is an essential map (and $\Delta(\lambda)_\calr$ is indecomposable). This completes the proof of (1).}
				
To prove (2) assume that $i: M\longrightarrow N$ is a monomorphism of two modules $M$ and $N$ in $\lc_{ \geq d}$, and there is a non-zero homomorphism $\rho: M \rightarrow \cv(\lambda)$ in the category $\lc_{\geq d}$. Then irreducibility of $ \cv(\lambda)$ implies that $\rho$ is surjective, and factors through $\text{Rad}_{\lc}(M)$, i.e, it induces a non-zero homomorphism  $\bar{\rho}: M/\text{Rad}_{\lc}(M) \rightarrow \cv(\lambda)$ such that $\rho=\bar{\rho}\circ\pi_M$ where $\pi_M: M\longrightarrow M/\text{Rad}_{\lc}(M)$ is the canonical morphism, and $\cv(\lambda)$ is also a direct summand of the semisimple module $M/\text{Rad}_{\lc}(M)$. Since $i^{-1}(\text{Rad}_{\lc}(N))=\text{Rad}_{\lc}(M)$, the monomorphism $i$ induces the corresponding monomorphism $\bar{i}: M/\text{Rad}_{\lc}(M)\longrightarrow N/\text{Rad}_{\lc}(N)$. Note that $M/\text{Rad}_{\lc}(M)$ and $N/\text{Rad}_{\lc}(N)$ are semisimple, we have a surjective morphism $\zeta: N/\text{Rad}_{\lc}(N)\longrightarrow \cv(\lambda)$ such that the following diagram commutes.
\begin{equation*}
		\begin{array}{c}
						\xymatrix{
                          0\ar[r]&M\ar[r]^i\ar[d]_{\pi_M}&N\ar[d]_{\pi_N}\\
                          &M/\text{Rad}_{\lc}(M)\ar[r]^{\bar{i}}\ar[d]_{\bar{\rho}}&N/\text{Rad}_{\lc}(N)\ar[dl]^{\zeta}\\
							&  \cv(\lambda) &.}
					\end{array}
\end{equation*}
Then $\varrho:=\zeta\circ\pi_N:N\longrightarrow\cv(\lambda)$ is the desired homomorphism such that $\rho=\varrho\circ i$.
We complete the proof of (2).

				(3) immediately follows from (1).				
			}	
		\end{proof}

		{
			We have the following corollary to Proposition \ref{prop: compute ext}.
			
			\begin{cor}\label{cor: ext exp} In the category $\lc_{\geq d}$, the following statements hold.
				\begin{itemize}
					\item[(1)] $\text{Ext}^i({}^{d'}\hspace{-2pt}\cv(\lambda),{}^d\hspace{-2pt}\cv(\mu))=0$ for $i>0$ and $\lambda,\mu\in \Lambda^+$.
					
					\item[(2)]
					$\text{Ext}^i({}^d\hspace{-2pt}\Delta(\lambda)_\calr,M)=0$ for  $i>0$ and $M\in \lc_{\geq d}$.

\item[(3)] { We have
\begin{align*}
\Ext^1_{\lc_{\geq d}}({}^{d}\hspace{-2pt}\calv(\lambda), {}^{d'}\hspace{-2pt}\calv(\mu))
=\begin{cases}
0,    &\text{ if }d=d';\cr
\cong \Hom_{\lc_{\geq d}}(\text{Rad}_{\lc}({}^{d}\hspace{-2pt}\Delta(\lambda)_\calr),{}^{d^{\prime}}\calv(\mu)),     &\text{ if } d<d'.
\end{cases}
\end{align*}

}
				\end{itemize}
			\end{cor}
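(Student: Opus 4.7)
The plan is to dispatch each of the three parts by reducing it to facts already established in Proposition~\ref{prop: compute ext} together with the $\Hom$ computations just above it.

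For part (1), I would simply invoke Proposition~\ref{prop: compute ext}(2): the module ${}^d\cv(\mu)$ is injective in $\lc_{\geq d}$ (since its depth equals the minimum depth of the category), so $\Ext^i_{\lc_{\geq d}}(X,{}^d\cv(\mu))=0$ for every $i>0$ and every object $X$ of $\lc_{\geq d}$; specialising to $X={}^{d'}\cv(\lambda)$ with $d'\geq d$ gives the statement. For part (2), I would invoke Proposition~\ref{prop: compute ext}(1): the module ${}^d\Delta(\lambda)_\calr$ is a projective cover of ${}^d\cv(\lambda)$ in $\lc_{\geq d}$, hence a projective object, and the claim is immediate.

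For part (3), the case $d=d'$ is already covered by part (1) (or equivalently by Proposition~\ref{prop: compute ext}(3)). For the substantive case $d<d'$, I would start from the canonical short exact sequence
$$0 \longrightarrow \text{Rad}_{\lc}({}^d\Delta(\lambda)_\calr) \longrightarrow {}^d\Delta(\lambda)_\calr \longrightarrow {}^d\cv(\lambda) \longrightarrow 0,$$
whose existence is given by Lemma~\ref{lem: deltaR cv}, and apply the left-exact functor $\Hom_{\lc_{\geq d}}(-,{}^{d'}\cv(\mu))$ to get the six-term sequence
\begin{align*}
0 &\to \Hom({}^d\cv(\lambda),{}^{d'}\cv(\mu)) \to \Hom({}^d\Delta(\lambda)_\calr,{}^{d'}\cv(\mu)) \\
&\to \Hom(\text{Rad}_{\lc}({}^d\Delta(\lambda)_\calr),{}^{d'}\cv(\mu)) \to \Ext^1({}^d\cv(\lambda),{}^{d'}\cv(\mu)) \\
&\to \Ext^1({}^d\Delta(\lambda)_\calr,{}^{d'}\cv(\mu)).
\end{align*}
Part (2) proven moments earlier kills the rightmost term. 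The two left-hand $\Hom$ spaces vanish because $d<d'$: the first by Proposition~4.1(1) (non-isomorphic simples), the second by Proposition~4.1(4) (homs from ${}^d\Delta(\lambda)_\calr$ to ${}^{d'}\cv(\mu)$ require $d=d'$ and $\lambda=\mu$). Consequently the connecting morphism delivers the asserted isomorphism $\Ext^1({}^d\cv(\lambda),{}^{d'}\cv(\mu))\cong \Hom(\text{Rad}_{\lc}({}^d\Delta(\lambda)_\calr),{}^{d'}\cv(\mu))$.

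The main point that requires a little care, rather than a serious obstacle, is ensuring the depth bookkeeping across the long exact sequence: the short exact sequence lies inside $\lc_{\geq d}$ (both $\text{Rad}_{\lc}({}^d\Delta(\lambda)_\calr)$ and ${}^d\Delta(\lambda)_\calr$ have depth at least $d$, though the radical in fact has strictly larger depth), and the vanishing of the two initial $\Hom$'s depends crucially on the strict inequality $d<d'$, matching the hypothesis of the case at hand. Once this is in place, no additional computation is needed, since all other ingredients have been supplied by the preceding proposition and the Hom-space dimensions.
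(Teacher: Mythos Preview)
Your proposal is correct and follows essentially the same approach as the paper: parts (1) and (2) are dispatched by injectivity and projectivity from Proposition~\ref{prop: compute ext}, and part (3) is obtained from the long exact sequence associated with the short exact sequence $0\to\text{Rad}_{\lc}({}^d\Delta(\lambda)_\calr)\to{}^d\Delta(\lambda)_\calr\to{}^d\calv(\lambda)\to 0$, with the $\Ext^1$ term on the right killed by projectivity. Your treatment is slightly more explicit than the paper's (which does not separate the cases $d=d'$ and $d<d'$ and leaves the vanishing of the initial $\Hom$ terms implicit), but the argument is the same.
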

		}
{		
	\begin{proof}
Since ${}^d\hspace{-2pt}\cv(\mu)$ is injective in $\lc_{ \geq d}$ by Proposition \ref{prop: compute ext}(2), Part (1)  follows. While  ${}^d\hspace{-2pt}\Delta(\lambda)_\calr$ is projective by Proposition \ref{prop: compute ext}(1), Part (2) holds.

{
As to Part (3), consider the short exact sequence

{
$$ 0\rightarrow \text{Rad}_{\lc}({}^{d}\hspace{-2pt}\Delta(\lambda)_\calr)\rightarrow   \,^d\hspace{-2pt}\Delta(\lambda)_\calr\rightarrow\,^d\hspace{-1pt}\calv(\lambda)\rightarrow 0.$$
}
Applying the left exact functor $\Hom_{\lc_{\geq d}}(-, {}^{d'}\hspace{-2pt}\calv(\mu))$ to the above short exact sequence, we get a long exact sequence, from which Part (3) follows  by Proposition \ref{prop: compute ext}(3).
}
\end{proof}	
}

		{
			
			\section{Cohomology for  universal Lie-Cartan modules}
	{In this section, we focus our concerns on the whole category $\lc$ and get some basic understandings.  Especially, we develop the so-called $\ulc$-cohomology theory and present some interesting results.}

 {
\subsection{Questions}	Note that $\calr$ itself is a natural Lie-Cartan module. Actually, $\calr$ is isomorphic to $\calv(0)$, as a Lie-Cartan module. Set $\mathcal{E}:=\text{Ext}_{\lc}^\bullet(\calr,\calr)$.
The following questions naturally arise.

			\begin{question}\label{ques: ext}
				\begin{itemize}
					\item[(1)] What does $\mathcal{E}$ look like? Is it  a finitely-generated commutative algebra?

					\item[(2)] Is $\mathcal{M}:=\text{Ext}^\bullet_{\lc}(M,M)$ a finitely-generated $\mathcal{E}$-module for any $M\in \lc^+$? In particular, what about $\Ext^\bullet(\calv(\lambda),\calv(\lambda))$?
					
					\item[(3)] { When is the extension nonzero in Corollary \ref{cor: ext exp}(3)?}
				\end{itemize}
			\end{question}
		
These questions are not trivial.  Currently, we can give some partial answer in the so-called universal Lie-Cartan module category (see Definition \ref{defn: univ LC}).

\subsection{Universal Lie-Cartan modules}
\begin{defn}\label{defn: univ LC} Let $\ggg=W(n)$, $S(n)$ or $H(n)$.  The universal Lie-Cartan module category $\ulc$ is a subcategory of $U(\ggg)$-modules whose objects are also $\calr$-modules satisfying  the following axioms
\begin{itemize}

\item[(1)] Any object $M$ is endowed with $\ggg$-action $\rho$, $\calr$-action $\theta$ satisfying the equation
$$ [\rho(X),\theta(f)]=\theta(X(f))
$$
for $f\in\calr$ and $X\in\ggg$.

\item[(2)] {Morphisms  between objects are homomorphisms  of $\ggg$-modules and of $\calr$-modules.}
\end{itemize}
\end{defn} 	

Clearly, $\lc$ is a full subcategory of $\ulc$.

\subsubsection{ { The naturalized Lie algebra and naturalized  associative algebra associated with a derivation Lie algebra of  a commutative algebra}}
		Let $\scrl$ be a Lie algebra over $\bbf$, and $\cala$ be a commutative and associative $\bbf$-algebra with $\scrl$-derivation action.  		{We define a new Lie algebra $\cala\natural\scrl$ whose underling space is $\cala\oplus\scrl$ with Lie products as below.
		\begin{align}\label{eq: sharp prod}
			&[f,g]=0 &\text{for }f,g\in \cala,\cr
			&[X, g]=X(g)=-[g,X] &\text{for } X\in\scrl, g\in\cala.
		\end{align}
		We call $\cala\natural\scrl$ a naturalized Lie algebra associated with $\cala$ and $\scrl$.}

{ Next we introduce  an algebra  $\cala\natural U(\scrl)$.}

{
 \begin{defn}\label{def: nat alg} The algebra $\cala\natural U(\scrl)$ is   $\cala\otimes U(\scrl)$ as a vector space, endowed with  multiplication structure defined via the following axiom.
 \begin{itemize}


\item[(N1)] $(a_1\otimes u_1 )(a_2\otimes u_2)=\sum_{i} a_1a_{2i}\otimes u_{i1} u_2$ for $a_1, a_2\in\cala$ and $u_1, u_2\in U(\scrl)$. Here    $u_1a_2=\sum_{i}a_{2i}u_{i1}$ in $U(\cala\natural\scrl)$.
\end{itemize}
{The forthcoming Lemma \ref{lem: associative} ensures that the above axiom (N1) make sense.} We call $\cala\natural U(\scrl)$ the naturalized  algebra associated with $\cala$ and $\scrl$.
\end{defn}

\begin{rem}
The naturalized algebra $\cala\natural U(\scrl)$ coincides with the smash product $\cala\# U(\scrl)$ defined in \cite{BIN} (see also \cite{Mon} for the general case) when $\cala=\calr$ and $\scrl=W(n)$.
\end{rem}

\begin{lem}\label{lem: associative}
{The multiplication defined in (N1) is  associative.}
\end{lem}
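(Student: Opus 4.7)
The plan is to realize $\cala\natural U(\scrl)$ as a quotient of the associative algebra $U(\cala\natural\scrl)$. Inside $U(\cala\natural\scrl)$, the canonical copies of $\cala$ and $U(\scrl)$ satisfy the straightening relation $[X,f]=X(f)$, but the subalgebra generated by $\cala$ is the symmetric algebra $S(\cala)$ rather than $\cala$ itself. To remedy this, let $J\subset U(\cala\natural\scrl)$ denote the two-sided ideal generated by $f\ast g-fg$ for $f,g\in\cala$, where $f\ast g$ is the product in $U(\cala\natural\scrl)$ while $fg$ denotes the element of $\cala$ (embedded in $U(\cala\natural\scrl)$) given by the $\cala$-product, together with $1_{\cala}-1$ if $\cala$ is unital. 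The quotient $Q:=U(\cala\natural\scrl)/J$ inherits an associative product from $U(\cala\natural\scrl)$.

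Define $\Psi\colon\cala\otimes U(\scrl)\to Q$ by $a\otimes u\mapsto[a\ast u]$. The central step is to show $\Psi$ is a linear isomorphism. Surjectivity follows from the PBW theorem for $\cala\natural\scrl$ with an ordered basis placing $\cala$-basis vectors before $\scrl$-basis vectors: every PBW monomial equals a $\cala$-product prefix followed by an $\scrl$-product suffix, and modulo $J$ the prefix collapses to a single element of $\cala$, so the monomial lies in the image of $\Psi$. For injectivity, I construct a representation of $\cala\natural\scrl$ on $\cala\otimes U(\scrl)$ by
$$\pi(f)(b\otimes u)=fb\otimes u,\qquad \pi(X)(b\otimes u)=X(b)\otimes u+b\otimes Xu$$
for $f,b\in\cala$, $X\in\scrl$, $u\in U(\scrl)$. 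A routine bracket-by-bracket check (using the derivation property $X(fg)=X(f)g+fX(g)$) shows $\pi$ respects the Lie bracket, so it extends to an algebra map $\widetilde\pi\colon U(\cala\natural\scrl)\to\mathrm{End}_{\bbf}(\cala\otimes U(\scrl))$. A direct computation gives $\widetilde\pi(f\ast g)=\widetilde\pi(fg)$ and $\widetilde\pi(1_{\cala})=\mathrm{id}$, so $\widetilde\pi$ descends to $\bar\pi\colon Q\to\mathrm{End}_{\bbf}(\cala\otimes U(\scrl))$. A short induction on the length of $u$ then yields $\bar\pi(\Psi(a\otimes u))(1\otimes 1)=a\otimes u$, providing a left inverse to $\Psi$.

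Once $\Psi$ is an isomorphism, the associative product on $Q$ transfers to $\cala\otimes U(\scrl)$; what remains is to identify this transferred product with the one specified by (N1). Writing $u_{1}a_{2}=\sum_{i}a_{2i}u_{i1}$ in $U(\cala\natural\scrl)$, we compute in $Q$,
$$[a_{1}\ast u_{1}]\cdot[a_{2}\ast u_{2}]=[a_{1}\ast u_{1}\ast a_{2}\ast u_{2}]=\sum_{i}[a_{1}\ast a_{2i}\ast u_{i1}\ast u_{2}]=\sum_{i}[(a_{1}a_{2i})\ast(u_{i1}u_{2})],$$
where the final equality invokes the defining relations of $J$. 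Applying $\Psi^{-1}$ yields $\sum_{i}a_{1}a_{2i}\otimes u_{i1}u_{2}$, exactly the right-hand side of (N1).

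The main obstacle is the injectivity of $\Psi$: this is where the derivation axiom of the $\scrl$-action on $\cala$ enters essentially, and where one must construct a faithful realization of $\cala\otimes U(\scrl)$ as a module over $U(\cala\natural\scrl)$ rather than argue purely inside the enveloping algebra. Once $\Psi$ is known to be a linear isomorphism, associativity of the product on $\cala\otimes U(\scrl)$ is automatic because $U(\cala\natural\scrl)/J$ is associative, and compatibility with (N1) is enforced by the straightening identity built into the axiom.
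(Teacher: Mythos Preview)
Your argument is correct and takes a genuinely different route from the paper. The paper proves associativity by a direct element-by-element computation: it fixes three simple tensors, expresses the relevant straightening identities $u_1b=\sum_i b_iu_{i1}$, $u_2c=\sum_j c_ju_{j2}$, $u_{i1}c_j=\sum_k c_{jk}u_{ki1}$ in $U(\cala\natural\scrl)$, and then expands both $((a\otimes u_1)(b\otimes u_2))(c\otimes u_3)$ and $(a\otimes u_1)((b\otimes u_2)(c\otimes u_3))$ to the common expression $\sum_{i,j,k}ab_ic_{jk}\otimes u_{ki1}u_{j2}u_3$. Your approach instead realizes $\cala\natural U(\scrl)$ as the quotient $U(\cala\natural\scrl)/J$, so that associativity is inherited rather than verified by hand; the work shifts to proving that $\Psi$ is a linear isomorphism, which you do via a concrete module action and evaluation at $1\otimes1$. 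The paper's method is shorter and needs no auxiliary constructions, while yours is more structural: it simultaneously yields well-definedness of the product in (N1) (independence of the chosen straightening expression), identifies $\cala\natural U(\scrl)$ with the smash product $\cala\#U(\scrl)$ alluded to in the paper's remark, and gives a ready-made module $\cala\otimes U(\scrl)$ that could be reused elsewhere. One small caveat: your injectivity step relies on evaluating at $1\otimes1$ and hence tacitly assumes $\cala$ is unital, which is satisfied in the paper's setting $\cala=\calr$ but is worth stating explicitly.
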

\begin{proof}
Take any $a,b,c\in\cala, u_1,u_2, u_3\in U(\scrl)$, assume that $u_1b=\sum_i b_iu_{i1}$, $u_2c=\sum_j c_ju_{j2}$, and $u_{i1}c_j=\sum_k c_{jk}u_{ki1}$ in $U(\cala\natural \scrl)$. Then
\begin{align*}
&\big((a\otimes u_1)(b\otimes u_2)\big)(c\otimes u_3)\cr
=&\sum_i(ab_i\otimes u_{i1}u_2)(c\otimes u_3)\cr
=&\sum_{i,j,k}ab_ic_{jk}\otimes u_{ki1}u_{j2}u_3.
\end{align*}
On the other hand,
\begin{align*}
&(a\otimes u_1)\big((b\otimes u_2)(c\otimes u_3)\big)\cr
=&(a\otimes u_1)\Big(\sum_jbc_j\otimes u_{j2}u_3\Big)\cr
=&\sum_{i,j,k}ab_ic_{jk}\otimes u_{ki1}u_{j2}u_3.
\end{align*}
Hence, $\big((a\otimes u_1)(b\otimes u_2)\big)(c\otimes u_3)=(a\otimes u_1)\big((b\otimes u_2)(c\otimes u_3)\big)$, as desired.
\end{proof}
}

{
Roughly speaking,  $\cala\natural U(\scrl)$ is an algebra defined on $\cala\otimes U(\scrl)$ satisfying  (\ref{eq: sharp prod}).  More precisely, we have the following
\begin{itemize}
\item[(N2)] $U(\scrl)\hookrightarrow \cala\natural U(\scrl)$, $u\mapsto 1\otimes u$ is an imbedding of associative algebras.

\item[(N3)] $\cala\hookrightarrow \cala\natural U(\scrl)$, $a\mapsto a\otimes 1$ is an imbedding of associative algebras.
\item[(N4)] $(a\otimes 1)(1\otimes X)=a\otimes X$ for $a\in\cala$ and $X\in\ggg$.
    \item[(N5)] $(1\otimes X) (a\otimes 1)=a\otimes X-X(a)\otimes 1$
for all $a\in \cala$ and $X\in\ggg$.

\end{itemize}

}

\subsubsection{An equivalent definition of universal Lie-Cartan modules}
{

\begin{lem}\label{lem: tensor r ulc} The following statements hold.
\begin{itemize}
\item[(1)]  { $\ulc$ is equivalent to the category of $\calr\natural U(\ggg)$-modules.}
\item[(2)]  For any $\ggg$-module $M$, $\calr\otimes_\bbf M$ becomes a universal Lie-Cartan module with natural $\calr$-, $\ggg$-action in the same sense as Lemma \ref{lem: M_R}.
    \end{itemize}
\end{lem}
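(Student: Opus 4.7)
The plan is to construct explicit mutually inverse functors between $\ulc$ and the category of $\calr\natural U(\ggg)$-modules, then deduce part (2) by the same computation as in Lemma~\ref{lem: M_R} with the grading hypothesis dropped.

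For part (1), in the forward direction, given $(M,\rho,\theta)\in\ulc$, I would define an action of $\calr\natural U(\ggg)$ on $M$ by
\[
(f\otimes u)\cdot m := \theta(f)\rho(u)m,\qquad f\in\calr,\ u\in U(\ggg),\ m\in M.
\]
To show this is well-defined, the key point is that the compatibility axiom $[\rho(X),\theta(f)]=\theta(X(f))$ for $X\in\ggg, f\in\calr$ extends by induction on the PBW-degree of $u\in U(\ggg)$ to a straightening rule
\[
\rho(u)\theta(f) = \sum_i \theta(f_i)\rho(u_i)
\]
in $\mathrm{End}_\bbf(M)$ whenever $uf=\sum_i f_iu_i$ holds in $U(\calr\natural\ggg)$ (these being the same relations because they are both generated by the bracket $[X,f]=X(f)$). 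Then the associativity computation done in Lemma~\ref{lem: associative} shows bilinearity in the algebra variable and consistency with the multiplication in (N1). Conversely, given a $\calr\natural U(\ggg)$-module $M$, the embeddings (N2) and (N3) give it an $\calr$-action $\theta$ and $\ggg$-action $\rho$, while (N5) rewrites exactly as $[\rho(X),\theta(f)]=\theta(X(f))$, so $M\in\ulc$. These two constructions are inverse to each other on objects, and a morphism is a morphism in $\ulc$ precisely when it intertwines both $\calr$- and $U(\ggg)$-actions, which is the same as being $\calr\natural U(\ggg)$-linear by (N2)--(N4).

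For part (2), I would define on $\calr\otimes_\bbf M$ the $\calr$-action by left multiplication $\theta(g)(f\otimes v)=gf\otimes v$ and the $\ggg$-action by
\[
\rho(X)(f\otimes v)=X(f)\otimes v+f\otimes X.v.
\]
The verification that $\rho$ defines a $\ggg$-module structure and that $[\rho(X),\theta(g)](f\otimes v)=\theta(X(g))(f\otimes v)$ is the direct computation carried out in the proof of Lemma~\ref{lem: M_R}, and does not use the admissibility or finite-depth hypotheses from the category $\co^+$. Hence $\calr\otimes_\bbf M\in\ulc$.

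The main obstacle is the forward construction in (1): one must check carefully that the prescription $(f\otimes u)\cdot m=\theta(f)\rho(u)m$ respects the implicit multiplication rule of $\calr\natural U(\ggg)$, which is defined through straightening in $U(\calr\natural\ggg)$. The clean way is to verify the compatibility on the generating commutator $[X,f]=X(f)$ and then propagate by induction on the length of $u$, parallel to the PBW-type computation used to prove associativity in Lemma~\ref{lem: associative}.
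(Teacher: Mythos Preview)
Your proposal is correct and follows essentially the same approach as the paper, which for part (1) simply records ``It directly follows from the definition'' and for part (2) invokes the argument of Lemma~\ref{lem: M_R}. You have supplied precisely the details the paper leaves implicit: the mutually inverse functors built from the embeddings (N2)--(N5) and the straightening rule inherited from $U(\calr\natural\ggg)$, together with the observation that the computation in Lemma~\ref{lem: M_R} nowhere uses the grading or finite-depth hypotheses of $\co^+$.
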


\begin{proof} (1) It directly follows from the definition.

(2) By the same arguments as in the proof of Lemma \ref{lem: M_R}, it can be proved.
\end{proof}

}

\subsection{\texorpdfstring{$\ulc$}{}-cohomology}

\subsubsection{}  Let us first introduce the unity map $\omega: \calr\longrightarrow\bbf$, which is by definition an algebra homomorphism with $\ker\omega=\sum_{i=1}^n\calr x_i$.
\subsubsection{}
{ Recall that in the ordinary cohomology theory for Lie algebras, one can present the cohomology through the Chevalley-Eilenberg complex (see \cite{Fuks}, \cite{Kn}, \cite[\S7.7]{Wei}, {\sl{etc.}}). Denote by $H^q(\ggg)$ and $H^q(\ggg,M)$  respectively the $q$th cohomology of $\ggg$ and  the $q$th cohomology of $\ggg$ with coefficient in the $\ggg$-module $M$.
 We first have the following key result.}
				
\begin{prop}\label{lem: basic coho}
Keep the notations as before.  In particular, let $\ggg=W(n)$, $S(n)$ or $H(n)$. The following statements hold. 							
\begin{itemize}								
\item[(1)]  All $q$th wedge products $\bigwedge^q\ggg$ of $\ggg$ fall in $\co^+$ for $q\in\mathbb{N}$. Correspondingly, $\calr\otimes_\bbf \bigwedge^q\ggg$ lies in $\lc$.								
								
\item[(2)]  Set $C_{\ulc}^q:=\Hom_\calr(\calr\otimes_\bbf\bigwedge^q\ggg, \calr)$ for $q\in\mathbb{N}$. Then $C_\ulc^q$ is an $\calr$-module by the unity  action $\Omega$, i.e.
    \begin{align}\label{eq: unity}
    (\Omega(f)\phi)(y)=\omega(f)\phi(y)
    \end{align}
    for $\phi\in C_{\ulc}^q$,  $f\in\calr$ and $y=r\otimes w\in\calr\otimes_\bbf\bigwedge^q\ggg$, along with a $\ggg$-module structure. {Such a module will be called a $(\ggg,\calr)$-module on which we don't consider any Lie-Cartan module structure. }
								
\item[(3)] The differentials $\{ \sfd_q\mid q=0,1\ldots,\}$						with
$C_\ulc^q{\overset{\sfd_q}\longrightarrow} C_\ulc^{q+1}$
								defined via
{
\begin{align*}
&\sfd_q \phi(r\otimes (g_1\wedge\dots\wedge g_{q+1}))\cr
									=&\displaystyle{\sum_{1\leq s<t\leq q+1}}(-1)^{s+t}\phi(r\otimes([g_s,g_t]\wedge g_1\wedge\dots\wedge\hat g_s \wedge\dots \wedge\hat g_t \wedge \dots \wedge g_{q+1})) \cr
									&+\displaystyle{\sum_{1\leq s\leq q+1}}(-1)^{s+1}(rg_s\phi(1\otimes (g_1\wedge\dots\wedge\hat g_s\wedge\dots\wedge g_{q+1})))		\end{align*}
								for $g_1,\ldots,g_{q+1}\in\ggg$ and $r\in\calr$,
								define the cochain complex $$\bfC=(C_{\ulc}^q, \sfd:=\sfd_q)$$
in the $(\ggg,\calr)$-module category.}
								
								\item[(4)] {The cohomology space $H^q(\bfC)$ is isomorphic to $H^q(\ggg,\calr)$.}
							\end{itemize}			
						\end{prop}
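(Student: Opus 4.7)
For part (1), the plan is first to observe that $\ggg=\bigoplus_{i\geq-1}\ggg_i$ itself lies in $\co^+$ with depth $-1$: each $\ggg_i$ is finite-dimensional and $\sfp$-stable, $\ggg$ is locally $U(\sfp)$-finite (the $\sfp$-submodule generated by any $x^\alpha\partial_j$ sits inside elements of bounded degree), and $\hhh$-semisimple via its root-space decomposition. The wedge power $\bigwedge^q\ggg$ then inherits a $\bbz$-grading of depth $-q$ with finite-dimensional homogeneous pieces (since each index $i_j\geq-1$ bounds the tuples that can contribute to a fixed total degree), and the $\sfp$- and $\hhh$-actions extend diagonally, so $\bigwedge^q\ggg\in\co^+$. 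Lemma \ref{lem: M_R} then produces the Lie-Cartan module structure on $\calr\otimes_\bbf\bigwedge^q\ggg$.

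For parts (2) and (3), I would equip $C_\ulc^q$ with a $\ggg$-action via the standard coadjoint formula
\[
(X\cdot\phi)(y):=X\cdot\phi(y)-\phi(X\cdot y),\qquad y\in\calr\otimes_\bbf\bigwedge^q\ggg,
\]
and verify that $X\cdot\phi$ remains $\calr$-linear with respect to $\Omega$ in (\ref{eq: unity}) using the compatibility (LC-1). Checking $\sfd^2=0$ follows the classical Chevalley--Eilenberg pattern, with the cancellations powered by the Jacobi identity in $\ggg$ and the derivation property of $\ggg$ acting on $\calr$; one also needs to verify that $\sfd_q\phi$ itself lies in $C_\ulc^{q+1}$, i.e.\ is $\calr$-linear in the first slot, which reduces to the $\calr$-linearity of $\phi$ together with the axioms in Definition \ref{defn: univ LC}.

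For part (4), the key idea is to compare $\bfC$ with the classical Chevalley--Eilenberg complex $\bigl(\Hom_\bbf(\bigwedge^q\ggg,\calr),\partial^{CE}\bigr)$, which by construction computes $H^\bullet(\ggg,\calr)$. The plan is to define
\[
\Psi:C_\ulc^q\longrightarrow\Hom_\bbf(\bigwedge^q\ggg,\calr),\qquad \phi\longmapsto\bigl(w\mapsto\phi(1\otimes w)\bigr),
\]
whose inverse sends $\psi$ to $(r\otimes w\mapsto r\psi(w))$; this is a bijection precisely because cochains in $C_\ulc^q$ are $\calr$-linear in the first slot. Specializing the defining formula for $\sfd_q\phi$ to $r=1$ recovers exactly the classical Chevalley--Eilenberg differential for the $\ggg$-module $\calr$, so $\Psi$ intertwines the differentials and induces the desired isomorphism $H^q(\bfC)\cong H^q(\ggg,\calr)$.

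The main obstacle I anticipate is the bookkeeping around the two distinct $\calr$-module structures involved --- the unity action $\Omega$ on cochains versus the natural $\calr$-action on $\calr\otimes_\bbf\bigwedge^q\ggg$ --- and ensuring that $\sfd_q$ genuinely lands back in $C_\ulc^{q+1}$. Once these consistency checks are verified, (3) and (4) become essentially formal transport from the classical Chevalley--Eilenberg theory.
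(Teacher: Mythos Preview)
Your proposal is essentially the same approach as the paper's: for (1) you verify the category axioms directly and invoke Lemma~\ref{lem: M_R}; for (2) you use the standard coadjoint formula $(\rho(X)\phi)(y)=X(\phi(y))-\phi(X.y)$; for (3) and (4) you reduce everything to the classical Chevalley--Eilenberg complex via the isomorphism $\Hom_\calr(\calr\otimes_\bbf\bigwedge^q\ggg,\calr)\cong\Hom_\bbf(\bigwedge^q\ggg,\calr)$, which is exactly what the paper does.

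One point you underemphasize: the statement asserts that $\bfC$ is a cochain complex \emph{in the $(\ggg,\calr)$-module category}, so you must check that each $\sfd_q$ is a $\ggg$-module homomorphism, not merely that it is well-defined and squares to zero. The paper carries this out explicitly (its step (3-2)) by restricting to $\sfd_q':=\sfd_q|_{\Hom_\bbf(\bigwedge^q\ggg,\calr)}$, noting that $\sfd_q'$ is the classical Chevalley--Eilenberg differential (hence $\ggg$-equivariant), and then extending by $\calr$-linearity. Also, your phrase ``$X\cdot\phi$ remains $\calr$-linear with respect to $\Omega$'' conflates two things: what you actually need is that $X\cdot\phi$ is $\calr$-linear as a map $\calr\otimes_\bbf\bigwedge^q\ggg\to\calr$ (so that it lies in $C_\ulc^q$), and this comes from (LC-1) on the source together with the derivation property on the target; compatibility with the unity action $\Omega$ is separate and trivial.
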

					
						\begin{proof}
					To prove (1) note that $\ggg=\bigoplus\limits_{i=-1}^{\infty}\ggg_{i}$ with all $\ggg_i$ being finite dimensional. Therefore one can write  $\bigwedge^q\ggg=\bigoplus\limits_{ l\geq -q}^{\infty}(\bigwedge^q\ggg)_{l}$, where $(\bigwedge^q\ggg)_{l}=\bigoplus\limits_{ i_1+\dots +i_q = l} \ggg_{i_1}\wedge \dots \wedge\ggg_{i_q}$ with all $(\bigwedge^q\ggg)_l$ being finite dimensional.	Again $\ggg$ is a locally finite $\sfp$-module, which implies that $\bigwedge^q\ggg$ is a locally finite $\sfp$-module. There is nothing to prove for the fact that $(\bigwedge^q\ggg)$ is a weight module.  Now by Lemma \ref{lem: M_R}, $\calr \otimes (\bigwedge^q\ggg)$ is a Lie-Cartan module. This completes the proof of (1).

For part (2), $C_{\ulc}^q$ is clearly an $\calr$-module. We further check its $\ggg$-module structure.
We usually introduce  a $\ggg$-action $\rho$ on $C_{\ulc}^q$ as follows.
\begin{align}\label{eq: def g action}
(\rho(X)\phi)(y):=X(\phi(y))-\phi(X.y)
 =X(\phi(y))-\phi(X(r)\otimes w)-\phi(r\otimes X.w),
 \end{align}
where $X\in\ggg$, $\phi\in C_{\ulc}^q$ and $y=r\otimes w\in\calr\otimes_\bbf\bigwedge^q\ggg.$
Then
 it is readily proved that $C_{\ulc}^q$ is a $\ggg$-module under the action $\rho$. 

 Now we check  (3).  We need to verify that
  \begin{itemize}
  \item[(3-1)] $\sfd_q$ is a well defined mapping, i.e. $\sfd_q\phi$ is an $\calr$-module homomorphism for any $\phi\in C_{\ulc}^q$;
  \item[(3-2)]  $\sfd_q$ is a homomorphism of both $\ggg$-modules and $\calr$-modules;
  \item[(3-3)] $\sfd_q$ is a cochain homomorphism, i.e. $\sfd_{q+1}\circ\sfd_q=0$.
  \end{itemize}
  For (3-1), note that $\phi$ is an $\calr$-module homomorphism. So $f\phi(y)=\phi(fy)$ for $f\in\calr$ and $y=r\otimes w\in \calr\otimes_\bbf \bigwedge^q\ggg$.
  Then it is a routine to check that $\sfd_q \phi$ is an $\calr$-module homomorphism by the definition of $\sfd_q.$
  For (3-2), we have
 \begin{align*}
&\Omega(f)(\sfd_q \phi)\cr
=&\omega(f)\sfd_q(\phi)\cr
=&\sfd_q (\omega(f)\phi)\cr
=&\sfd_q(\Omega(f)\phi).
			\end{align*}
So we have already proved that $\sfd_q$ is an $\calr$-module homomorphism.

Next, we show that $\sfd_q$ is a $\ggg$-module homomorphism. Note that $$C_{\ulc}^q=\Hom_\calr(\calr\otimes \bigwedge\nolimits^q\ggg,\calr)\cong \Hom_\bbf(\bigwedge\nolimits^q\ggg,\calr).$$
So we first claim  that $\sfd'_q:={\sfd_q}|_{\Hom_{\bbf}(\bbf\otimes \bigwedge^q\ggg,\calr)}$ is a $\ggg$-module homomorphism. By definition, $\sfd'_q$ sends  $\phi$ to $\sfd'_q(\phi)$ via
\begin{align*}
&\sfd'_q(\phi)(1\otimes g_1\wedge\dots\wedge g_{q+1})\cr
	=&\displaystyle{\sum_{1\leq s<t\leq q+1}}(-1)^{s+t}\phi(1\otimes [g_s,g_t]\wedge g_1\wedge\dots\wedge\hat g_s \wedge\dots \wedge\hat g_t \wedge \dots \wedge g_{q+1}) \cr
	&+\displaystyle{\sum_{1\leq s\leq q+1}}(-1)^{s+1}g_s.\phi(1\otimes g_1\wedge\dots\wedge\hat g_s\wedge\dots\wedge g_{q+1})
\end{align*}
This claim is due to the fact that with $\sfd_q'$ we return to the situation of  the classical Chevalley-Eilenberg complex. Such $\sfd'_q$ is surely a $\ggg$-module homomorphism. Now we already know that both  $\sfd_q$ and $\phi$ are $\calr$-module homomorphisms. Write $w:=g_1\wedge\dots\wedge g_{q+1}$.
Then for any $X\in\ggg$, we have
\begin{align*}
& (\rho(X)(\sfd_q(\phi)))(r\otimes w)\cr
=&X ((\sfd_q\phi)(r\otimes w))-(\sfd_q\phi)(X(r\otimes w))\cr
=&(X(r)\sfd_q'\phi(1\otimes w)+r X(\sfd_q'\phi(1\otimes w))
-(X(r)\sfd_q'(\phi)(1\otimes w)+r\sfd_q'(\phi)(1\otimes X.w))\cr
=&r X(\sfd_q'\phi(1\otimes w))
-r\sfd_q'(\phi)(1\otimes X.w))\cr
=&r (\rho(X)(\sfd_q'\phi))(1\otimes w)\cr
=&r (\sfd_q'(\rho(X)\phi))(1\otimes w)\cr
=&\sfd_q(\rho(X)\phi)(r\otimes w)).
\end{align*}
The above last equation is because $\sfd_q(\rho(X)\phi)$ is an $\calr$-homomorphism by (3-1).
Hence $\rho(X)\sfd_q(\phi)=\sfd_q(\rho(X)\phi)$. So $\sfd_q$ is indeed a $\ggg$-module homomorphism.

 Now we check (3-3). For $y=r\otimes w\in \calr\otimes \bigwedge^{q+2}\ggg,\phi\in C_{\ulc}^q$, by Classical Chevalley-Eilenberg complex we have $(\sfd_{q+1}'\circ\sfd_{q}'(\phi))(1\otimes w)=0$. Hence we finally have
 $$ (\sfd_{q+1}\circ\sfd_{q}(\phi))(y)=r(\sfd_{q+1}'\circ\sfd_{q}'(\phi))(1\otimes w)=0. $$
 This completes the proof of Part (3).

 For Part (4) it follows from the fact that
 $\Hom_\calr (\calr\otimes_\bbf \bigwedge^q\ggg, \calr) \cong \Hom_\bbf (\bigwedge^q\ggg , \calr)$.

 Summing up, we accomplish the proof.	
\end{proof}
						
	{
\subsubsection{ \texorpdfstring{$\ulc$}{} -cohomology}
We are going to  introduce the cohomology of $\ulc$, which will be simply called the $\ulc$-cohomology.
By Lemma \ref{lem: tensor r ulc}, a universal Lie-Cartan module is actually a $\calr\natural U(\ggg)$-module.

\begin{lem} For any given $M\in\ulc$, set $\Gamma(M)=\Hom_{\ulc}(\calr,M)$. Then $\Gamma$ defines a left exact functor from $\ulc$ to the category of $(\ggg,\calr)$-modules.
\end{lem}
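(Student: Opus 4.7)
The plan is to verify three claims in turn: first, that $\Gamma(M)$ carries a natural $(\ggg,\calr)$-module structure; second, that $\Gamma$ acts functorially on morphisms of $\ulc$; and third, that it is left exact.

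For the $(\ggg,\calr)$-structure, I would model the construction on the unity action used for the cochain spaces $C_\ulc^q$ in Proposition~\ref{lem: basic coho}(2). Using the augmentation $\omega:\calr\rightarrow\bbf$ introduced just before Proposition~\ref{lem: basic coho}, define the $\calr$-action on $\Gamma(M)$ by
\[
(\Omega(f)\phi)(r):=\omega(f)\phi(r) \qquad (f,r\in\calr,\ \phi\in\Gamma(M)).
\]
Since $\omega(f)$ is a scalar, $\Omega(f)\phi$ is again both $\calr$- and $\ggg$-linear, so it lies in $\Gamma(M)$; the module axioms follow because $\omega$ is a ring homomorphism. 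Take the $\ggg$-action to be the standard one on a Hom-space, $(X\cdot\phi)(r):=X\cdot\phi(r)-\phi(X(r))$, which vanishes identically on $\Gamma(M)$ by the $\ggg$-linearity of $\phi$. Hence $\Gamma(M)$ carries the trivial $\ggg$-action together with the $\Omega$-action of $\calr$, and these define a $(\ggg,\calr)$-module structure (no compatibility between the two is required by definition).

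For functoriality, to any morphism $\alpha:M\rightarrow N$ in $\ulc$ assign $\Gamma(\alpha)(\phi):=\alpha\circ\phi$. Post-composition with $\alpha$ preserves both $\calr$- and $\ggg$-linearity, so $\Gamma(\alpha):\Gamma(M)\rightarrow\Gamma(N)$ is well-defined, and it commutes tautologically with the $\Omega$-action and with the trivial $\ggg$-action. The identities $\Gamma(\id_M)=\id_{\Gamma(M)}$ and $\Gamma(\beta\alpha)=\Gamma(\beta)\Gamma(\alpha)$ are immediate.

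For left exactness, I would invoke the standard principle that $\Hom_{\mathcal{C}}(X,-)$ is left exact in any abelian category $\mathcal{C}$. By Lemma~\ref{lem: tensor r ulc}(1), $\ulc$ is equivalent to the category of $\calr\natural U(\ggg)$-modules, which is abelian. Hence for any short exact sequence $0\rightarrow M'\rightarrow M\rightarrow M''\rightarrow 0$ in $\ulc$, the induced sequence $0\rightarrow\Gamma(M')\rightarrow\Gamma(M)\rightarrow\Gamma(M'')$ is exact as $\bbf$-vector spaces, and the connecting maps are $(\ggg,\calr)$-module homomorphisms by the functoriality step. The only mild subtlety, rather than a genuine obstacle, is pinning down the correct $\calr$-module structure on $\Gamma(M)$: the naive candidates $(f\cdot\phi)(r)=f\phi(r)$ and $(f\cdot\phi)(r)=\phi(fr)$ both destroy the $\ggg$-linearity of $\phi$, so the $\omega$-factored action is essentially forced, in parallel with the construction for $C_\ulc^q$.
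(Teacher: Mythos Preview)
Your proposal is correct and follows essentially the same approach as the paper: the trivial $\ggg$-action arising from $\ggg$-linearity of $\phi$, the unity $\calr$-action $\Omega(f)\phi=\omega(f)\phi$, and left exactness from the general Hom functor property. If anything, you are more thorough than the paper, which omits the explicit functoriality check and the remark on why the naive $\calr$-actions fail.
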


\begin{proof} Let  $(\rho_0,\theta_0)$ be the structure mapping pair on the Lie-Cartan module $M$.  We  make arguments in steps.

(1) $\Gamma(M)$ is endowed with trivial $\ggg$-module structure. Actually, by definition we have $\rho_0(X)(\phi(r))=\phi(X(r))$, equivalently, $\rho(X)\phi=0$ where $\phi\in\Gamma(M)$, $X\in\ggg, r\in\calr$, $\rho$ is usually defined as in (\ref{eq: def g action}).

(2) Consider the action $\theta$ of $\calr$ on $\Gamma(M)$ by the unity action defined in (\ref{eq: unity}).
 It is obvious that $\Omega(f)\phi$ is still in $\Gamma(M)$, since $\Omega(f)\phi=\omega(f)\phi$.
The associative property is easily  confirmed. Hence $\Gamma(M)$ is an $\calr$-module.

The left exactness comes from the general property of the hom functor.
The proof is completed.
\end{proof}

We will denote by $(\ggg,\calr)\hmod$ the category of $(\ggg,\calr)$-modules.

\begin{defn}\label{def: ulc coh} We define the $q$th $\ulc$-cohomology  with coefficient in $M\in\ulc$ to be the right derived functor $\textsf{R}^q(\Gamma)(M)$. This cohomology is denoted by $H^q_{\ulc}(M)$.
\end{defn}

 \subsection{Realization  Theorem of \texorpdfstring{$H^q_{\ulc}(M)$}{}   }\label{sec: extended CE complex} In the same way as Proposition \ref{lem: basic coho} we can define an extended Chevalley-Eilenberg complex and apply it to realize the cohomology of $\ulc$. We define
	$$C_{\ulc}^q(M):=\Hom_\calr(\calr\otimes_\bbf\bigwedge\nolimits^q\ggg, M)$$
for any $M\in\ulc$ and $q\in\mathbb{N}$. Then by a direct check it is easily seen that $C_\ulc^q(M)$ is still an object in $(\ggg,\calr)\hmod$ with unity $\calr$-action as in (\ref{eq: unity}).  The differential $\sfd_q^M: C_\ulc^q(M){\longrightarrow} C_\ulc^{q+1}(M)$ can be similarly defined as in Proposition \ref{lem: basic coho}(3) such that  $\bfC(M):=(C_{\ulc}^q(M), \sfd:=\sfd_q^M)$ becomes a cochain complex in  $(\ggg,\calr)\hmod$.

\subsubsection{Projective resolution} In order to establish the realization  theorem, we have to make some preparation. Set { $\bfV_q=\calr\natural U(\ggg)\otimes_\calr(\calr\otimes\bigwedge^q(\ggg))$ for $q\in\bbz_{\geq 0}$. Note that we have natural $\calr$-action $\theta$ and $\ggg$-action $\rho$} on $ \bfV_q$ with {
\begin{align}\label{eq: action on Vq}
 &\theta(f)(u\otimes (r\otimes w))=fu\otimes (r\otimes w),\cr
&\rho(E)(u\otimes (r\otimes w))=E u\otimes (r\otimes w)
\end{align}}
for {$f\in\calr, E\in \ggg, u\otimes (r\otimes w)\in \bfV_{q}$} with $u\in \calr\natural U(\ggg)$, $r\in\calr$ and $w\in \bigwedge^{q}\ggg$.

\begin{conven}\label{conv: z and y} In the subsequent arguments, we make an appointment on notations. Set
$$z=u\otimes r\otimes w\in \bfV_{q+1}$$
and
$$y=r\otimes w\in \calr\otimes\bigwedge\nolimits^{q+1}\ggg,$$
where $u\in \calr\natural U(\ggg)$, $r\in\calr$ and $w=g_1\wedge\cdots\wedge g_{q+1}\in\bigwedge^{q+1}\ggg$.
\end{conven}
Note that those elements $z$ in the above linearly span $\bfV_{q+1}$, and the ones such as $y$ linearly span $\calr\otimes \bigwedge^{q+1}\ggg$. We will use those $z$ and $y$ in the subsequent arguments.

\begin{lem}\label{lem: Vq is free LC} $\bfV_q$ is a free universal Lie-Cartan module.
\end{lem}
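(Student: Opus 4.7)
My plan is to show that $\bfV_q$ is canonically isomorphic to the free left $\calr\natural U(\ggg)$-module on the vector space $\bigwedge^q\ggg$. Since by Lemma~\ref{lem: tensor r ulc}(1) the category $\ulc$ is equivalent to the category of left $\calr\natural U(\ggg)$-modules, the word ``free'' in the statement should be interpreted in this module-theoretic sense, and the conclusion will follow at once from such an identification.

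The first step is a tensor-product collapse: I would produce mutually inverse left $\calr\natural U(\ggg)$-module maps
$$\Psi\colon\calr\natural U(\ggg)\otimes_\bbf\bigwedge\nolimits^q\ggg\;\longrightarrow\;\bfV_q,\qquad u\otimes w\;\longmapsto\;u\otimes(1\otimes w),$$
$$\Phi\colon\bfV_q\;\longrightarrow\;\calr\natural U(\ggg)\otimes_\bbf\bigwedge\nolimits^q\ggg,\qquad u\otimes(r\otimes w)\;\longmapsto\;\bigl(u\cdot(r\otimes 1)\bigr)\otimes w,$$
where $\cdot$ denotes multiplication in $\calr\natural U(\ggg)$ using the embedding $r\mapsto r\otimes 1$ from axiom (N3). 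The well-definedness of $\Phi$ on the $\calr$-balanced tensor product follows from associativity in $\calr\natural U(\ggg)$: for $s\in\calr$, both sides of the relation $u\cdot(s\otimes 1)\otimes(r\otimes w)=u\otimes(sr\otimes w)$ are sent to $(u\cdot(sr\otimes 1))\otimes w=(u\cdot(s\otimes 1)\cdot(r\otimes 1))\otimes w$. That $\Phi$ and $\Psi$ are mutually inverse is then a direct calculation. The second step is to verify that under $\Psi$ the structural maps $\theta$ and $\rho$ prescribed in (\ref{eq: action on Vq}) correspond to left multiplication by $f\otimes 1$ and $1\otimes E$, respectively, on $\calr\natural U(\ggg)\otimes_\bbf\bigwedge^q\ggg$; this is immediate from the definitions together with axioms (N2) and (N4).

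With these two steps in place, the conclusion is formal: $\calr\natural U(\ggg)\otimes_\bbf\bigwedge^q\ggg$ is manifestly the free left $\calr\natural U(\ggg)$-module on any $\bbf$-basis of $\bigwedge^q\ggg$, hence so is $\bfV_q$ via $\Psi$, and via Lemma~\ref{lem: tensor r ulc}(1) this realises $\bfV_q$ as a free object of $\ulc$ generated by $\{1\otimes(1\otimes w)\mid w\in\bigwedge^q\ggg\}$. The main obstacle I anticipate is bookkeeping rather than substance --- keeping straight the right $\calr$-module structure on $\calr\natural U(\ggg)$ (via right multiplication by $(r\otimes 1)$) against the left $\calr$-module structure on $\calr\otimes_\bbf\bigwedge^q\ggg$ (via multiplication on the first factor) when verifying that $\Phi$ descends to the balanced tensor product. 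No input specific to the types $W(n)$, $S(n)$ or $H(n)$ enters the argument; it is purely formal from Definition~\ref{def: nat alg}.
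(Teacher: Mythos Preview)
Your proposal is correct and follows essentially the same route as the paper: both arguments identify $\bfV_q$ as a free left $\calr\natural U(\ggg)$-module and then invoke Lemma~\ref{lem: tensor r ulc}(1). The paper's proof is much terser --- it checks the compatibility $[\rho(X),\theta(f)]=\theta(X(f))$ directly from~(\ref{eq: action on Vq}) and then simply asserts freeness ``due to Lemma~\ref{lem: tensor r ulc}'' --- whereas you spell out the tensor-collapse isomorphism $\calr\natural U(\ggg)\otimes_\calr(\calr\otimes_\bbf\bigwedge^q\ggg)\cong\calr\natural U(\ggg)\otimes_\bbf\bigwedge^q\ggg$ that the paper leaves implicit.
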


\begin{proof}
We only need to check the compatibility between $\rho$ and $\theta$.
For $z\in \bfV_q$, we want to show
\begin{align}\label{eq: bfv check}
[\rho(X),\theta(f)]z=\theta(X(f))z.
\end{align}
for $X\in\ggg$ and $f\in\calr$.
This directly follows from the definitions of $\rho$ and $\theta$.
 So $\bfV_q$ is indeed a universal Lie-Cartan module.
 Due to  Lemma \ref{lem: tensor r ulc} it is naturally a free universal Lie-Cartan module.
 \end{proof}

Note that { $\calr\natural U(\ggg)=\calr U(\ggg)\ggg\oplus \calr$}. Consider the canonical projection $$\kappa: \bfV_0=\calr\natural U(\ggg)\longrightarrow\calr.$$
Precisely,  $\kappa$ is defined via setting
 $$\kappa(u):=\gamma$$
for $u\in \calr\natural U(\ggg)$ uniquely expressed in the following way
\begin{align}\label{eq: u exp}
u=\sum_{i}\gamma_iu_i+\gamma
\end{align}
with $\gamma_i, \gamma\in \calr$ and $u_i\in U(\ggg)\ggg$.
It is easily checked that $\kappa$ is a Lie-Cartan module homomorphism.

Consider $d_0:\bfV_1\rightarrow \bfV_0$ defined via
$$d_0(u\otimes (r\otimes   g))=urg$$
and $d_q: \bfV_{q+1}\rightarrow \bfV_{q}$ defined for $q>0$ via
\begin{align*}
&d_q(u\otimes (r\otimes(  g_1\wedge\dots\wedge g_{q+1})))\cr
=&\displaystyle{\sum_{1\leq s<t\leq q+1}}(-1)^{s+t}u\otimes (r\otimes ([g_s,g_t]\wedge g_1\wedge\dots\wedge\hat g_s \wedge\dots \wedge\hat g_t \wedge \dots \wedge g_{q+1})) \cr
&+\sum_{1\leq s\leq q+1}(-1)^{s+1}urg_s\otimes (1\otimes (g_1\wedge\dots\wedge\hat g_s\wedge\dots\wedge g_{q+1}))			\end{align*}
for $g_1,\ldots,g_{q+1}\in\ggg$.

\begin{lem}\label{lem: dq is lc h} All $d_q$ are Lie-Cartan module homomorphisms for $q=0,1,\ldots$.
\end{lem}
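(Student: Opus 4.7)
The plan is to check, for each $q \geq 0$, that the map $d_q$ commutes with the $\calr$-action $\theta$ and the $\ggg$-action $\rho$ described in (\ref{eq: action on Vq}). The key structural observation is that both $\theta(f)$ and $\rho(E)$ act on $\bfV_q$ by left multiplication on the $\calr\natural U(\ggg)$-factor, whereas $d_q$, term by term, modifies this factor only by \emph{right} multiplication (by elements of the form $rg_s$, where $r\in\calr$ and $g_s\in\ggg$ are viewed inside $\calr\natural U(\ggg)$ via the embeddings (N2), (N3)). Since left and right multiplications commute in any associative algebra, the required compatibility becomes formally transparent.

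Before anything else, I would verify that $d_q$ is well-defined on the tensor product over $\calr$, i.e.\ that $d_q(uf \otimes (r\otimes w)) = d_q(u \otimes (fr \otimes w))$ for every $f\in\calr$. Both sums in the defining formula reduce to the identity $(uf)\cdot (rg_s) = u\cdot (fr)g_s$ in $\calr\natural U(\ggg)$ (and similarly for the ``internal'' bracket sum with $u$ undisturbed), which is the associativity confirmed in Lemma \ref{lem: associative}, together with the commutativity of $\calr$ so that $fr = rf$.

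For $q=0$ the compatibility is immediate:
\[
d_0(\theta(f)(u\otimes r\otimes g)) = d_0(fu\otimes r\otimes g) = (fu)rg = f\cdot (urg) = \theta(f)\, d_0(u\otimes r\otimes g),
\]
and identically for $\rho(E)$. For $q\geq 1$, write $z = u\otimes(r\otimes w)$ with $w=g_1\wedge\cdots\wedge g_{q+1}$, and split $d_q(z)$ into its two sums according to Convention \ref{conv: z and y}. The first sum has summands of the form $u\otimes(r\otimes w')$, where $u$ is untouched, so $\theta(f)$ and $\rho(E)$ commute with this piece trivially. The summands of the second sum have the form $urg_s\otimes(1\otimes w'')$, and the needed identities
\[
f\cdot(urg_s) = (fu)\cdot rg_s, \qquad E\cdot(urg_s) = (Eu)\cdot rg_s,
\]
are once more associativity in $\calr\natural U(\ggg)$. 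Combining the two sums yields $\theta(f)\,d_q(z) = d_q(\theta(f)z)$ and $\rho(E)\,d_q(z) = d_q(\rho(E)z)$.

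The entire verification is short bookkeeping, and there is no real obstacle; the only mild subtlety is consistently interpreting each $r$ and each $g_s$ appearing in the definition of $d_q$ as an element of $\calr\natural U(\ggg)$ through the embeddings (N2) and (N3), so that products such as $urg_s$ are unambiguous. With that convention in place, the lemma follows.
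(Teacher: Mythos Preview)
Your proof is correct and follows essentially the same approach as the paper's: both verify directly that $d_q$ commutes with left multiplication by $\calr$ and by $\ggg$ on the $\calr\natural U(\ggg)$-factor, using that $d_q$ only modifies this factor by right multiplication. Your version is in fact slightly more careful, since you also check well-definedness over $\otimes_\calr$ and articulate the left-versus-right multiplication reason explicitly, whereas the paper simply writes out the resulting formula.
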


\begin{proof} It is easily verified that $d_q$ is an $\calr$-module homomorphism. Now we check it is a $\ggg$-module homomorphism. It is obvious in the case $q=0$. In the following, we suppose $q>0$.
For $X\in\ggg$ and $z=u\otimes (r\otimes (g_1\wedge \cdots \wedge g_{q+1}))\in \bfV_{q+1}$, we have
\begin{align*}
&\rho(X) d_q (z)\cr
=&\displaystyle{\sum_{1\leq s<t\leq q}}(-1)^{s+t}X u\otimes (r\otimes ([g_s,g_t]\wedge g_1\wedge\dots\wedge\hat g_s \wedge\dots \wedge\hat g_t \wedge \dots \wedge g_{q+1})) \cr
&+\sum_{1\leq s\leq q}(-1)^{s+1}X urg_s\otimes (1\otimes (g_1\wedge\dots\wedge\hat g_s\wedge\dots\wedge g_{q+1}))\cr
=&d_q(\rho(X) z).
\end{align*}
The proof is completed.
\end{proof}

By Lemmas \ref{lem: Vq is free LC} and \ref{lem: dq is lc h}, we are in the position to introduce a projective resolution.

\begin{prop}\label{prop: proj resol}
 The following sequence gives rise to  a projective resolution of $\calr$ in $\ulc$:
\begin{align}\label{eq: proj res}
 \cdots\cdots\longrightarrow\bfV_{q+1}{\overset{d_q} \longrightarrow}\bfV_{q}\longrightarrow\cdots\longrightarrow\bfV_1{\overset{d_0}{\longrightarrow}}\bfV_0{\overset{\kappa}{\longrightarrow}}\calr
\longrightarrow 0.
\end{align}
\end{prop}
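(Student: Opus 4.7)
The plan is to reduce exactness of (\ref{eq: proj res}) to that of the classical Chevalley--Eilenberg resolution of the trivial $U(\ggg)$-module $\bbf$, via a smash-product PBW identification. Projectivity of each $\bfV_q$ in $\ulc$ is already Lemma \ref{lem: Vq is free LC}, and that $d_q, \kappa$ are morphisms in $\ulc$ follows from Lemma \ref{lem: dq is lc h} together with the construction of $\kappa$. What remains is exactness of the augmented sequence.

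\textbf{Step 1 (PBW for the smash product).} First I would establish that the multiplication map
$$\mu:\calr\otimes_\bbf U(\ggg)\longrightarrow \calr\natural U(\ggg),\qquad r\otimes \tilde u\longmapsto r\cdot\tilde u$$
is an isomorphism of left $\calr$-modules and right $U(\ggg)$-modules. Using axiom (N5), namely $Xr=rX-X(r)$ for $X\in\ggg$ and $r\in\calr$, one inductively moves every $r\in\calr$ to the left of any $U(\ggg)$-word, so every element of $\calr\natural U(\ggg)$ has a unique expression $\sum_i r_i u_i$ with the $u_i$ running over a fixed PBW basis of $U(\ggg)$. This is the standard PBW-type property of the smash product $\calr\# U(\ggg)$.

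\textbf{Step 2 (Comparing complexes).} Combining $\mu^{-1}$ with the canonical identification $\bfV_q\cong \calr\natural U(\ggg)\otimes_\bbf\bigwedge^q\ggg$ given by $u\otimes(r\otimes w)\mapsto ur\otimes w$ yields vector-space isomorphisms
$$\Psi_q:\bfV_q \stackrel{\sim}{\longrightarrow} \calr\otimes U(\ggg)\otimes\bigwedge\nolimits^q\ggg.$$
Direct calculation then shows that under $\Psi_\bullet$ the differential $d_q$ corresponds to $\text{id}_\calr\otimes d^{CE}_q$, where $d^{CE}_q$ is the standard Chevalley--Eilenberg differential on $U(\ggg)\otimes\bigwedge^\bullet\ggg$; likewise $\kappa$ corresponds to $\text{id}_\calr\otimes\epsilon$ with $\epsilon:U(\ggg)\to\bbf$ the augmentation. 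The bracket term of $d_q$ transports trivially since it affects only the $\bigwedge$-factor. The subtle piece is the ``action'' term $urg_s$, but right multiplication by $g_s\in\ggg$ in $\calr\natural U(\ggg)$ corresponds under $\mu$ to right multiplication in the $U(\ggg)$-factor alone: no derivation terms appear, because $g_s$ sits to the right of both $u$ and $r$, so no commutation of $\ggg$ past $\calr$ is required.

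\textbf{Step 3 (Conclusion).} The classical Chevalley--Eilenberg complex
$$\cdots\longrightarrow U(\ggg)\otimes\bigwedge\nolimits^q\ggg\longrightarrow\cdots\longrightarrow U(\ggg)\stackrel{\epsilon}{\longrightarrow}\bbf\longrightarrow 0$$
is a well-known free (hence exact) resolution of $\bbf$, and tensoring over $\bbf$ by the vector space $\calr$ preserves exactness. Transporting back via $\Psi_\bullet$ yields exactness of (\ref{eq: proj res}); in particular $d_{q-1}\circ d_q=0$ is inherited from the corresponding CE identity. The main obstacle, I expect, is Step 2: the careful bookkeeping needed to verify that $\Psi_\bullet$ intertwines the two differentials, which pins down exactly why the smash-product structure makes the complex behave as nicely as the classical one. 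Once this is in place, the rest is formal.
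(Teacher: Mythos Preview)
Your proposal is correct and follows essentially the same approach as the paper: both reduce exactness to the classical Chevalley--Eilenberg resolution via the identification $\calr\natural U(\ggg)\cong \calr\otimes_\bbf U(\ggg)$ and the observation that the differentials become $\text{id}_\calr\otimes d^{CE}_q$ under this identification. The paper carries this out more explicitly by writing $\bfV_q$ as $\calr U(\ggg)\otimes_\bbf\bigwedge^q\ggg$, expressing elements as $\sum_i\gamma_i u_i\otimes w_i$, and checking the inclusions $\im d_q\subseteq\ker d_{q-1}$ and $\ker d_{q-1}\subseteq\im d_q$ by hand, whereas you package the same computation as a single isomorphism of complexes $\Psi_\bullet$.
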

\begin{proof} Owing to Lemmas  \ref{lem: Vq is free LC} and \ref{lem: dq is lc h}, we only need to show the exactness of the sequence (\ref{eq: proj res}).  The arguments proceed in three steps.

(1) The exactness at $\calr$.

By definition,  it is easily seen that $\kappa$ is surjective. So it is proved.

(2) The exactness at $\bfV_0$, i.e.  $\ker\kappa=\im d_0$.

By definition it is easily shown
{
$$\ker\kappa=\calr U(\ggg)\ggg  =\im d_0.$$
}
 So it is proved.

(3) The exactness at $\bfV_q$ for $q>0$.

Consider
 $z=u\otimes (r\otimes w)\in \bfV_{q+1}$.
 Then $\bfV_{q+1}$ is spanned by monomial tensor elements of the form $z$. We express  $ur$ as $\gamma+\sum_{i}\gamma_iu_i$ with $\gamma, \gamma_i\in \calr$ and $u_i\in U(\ggg)\ggg$  by the arguments around (\ref{eq: u exp}).
 Then we have $$d_{q}(z)=d_{q}(\gamma\otimes (1\otimes w)+\sum_i\gamma_i u_i\otimes 1\otimes w).$$ Note that all $d_q$ are Lie-Cartan module homomorphisms. So we have
 \begin{align*}
 d_{q}(z)&=\gamma d_{q}(1\otimes1\otimes w) +\sum_i \gamma_i d_{q}(u_i\otimes (1\otimes w)).
 \end{align*}
Set $d'_{q}:=d_{q}|_{U(\ggg)\otimes \bigwedge^{q+1}\ggg}$. Then $\{d'_q\}$ satisfies the exactness because $d'_q$ is in the ordinary projective resolution of $\bbf$ in the $U(\ggg)$-module category (see \cite[\S7]{Wei}):
\begin{align}\label{eq: ordinary exact}
(\bfV^{{(\ggg)}}_q:=U(\ggg)\otimes \bigwedge\nolimits^q\ggg, d_q')_{q=0,1,2,\ldots}{\overset {\kappa|_{U(\ggg)}}\longrightarrow}\bbf\longrightarrow 0.
\end{align}
 Hence we have
\begin{align*}
 d_{q-1}\circ d_{q}(z)\
 =\gamma d'_{q-1}\circ d'_{q}(1\otimes1\otimes w) +\sum_i  \gamma_i d'_{q-1}\circ d'_{q}(u_i\otimes (1\otimes w))
 =0
 \end{align*}
which means $\im d_q\subseteq\ker d_{q-1}$.

Next we verify that $\im d_q\supseteq \ker d_{q-1}$. Take  it into account that $\bfV_q$ can be { simply expressed} as $\calr U(\ggg)\otimes_\bbf \bigwedge^{q}\ggg$, and all $d_q$ are Lie-Cartan module homomorphisms again.  Thus for any $\zzz\in \bfV_{q}$ we can write $\zzz=\sum_i\gamma_i u_i\otimes w_i$ with $\gamma_i\in \calr$, $u_i\in U(\ggg)$ and $w_i\in \bigwedge^{q}\ggg$. Then we are actually considering such $d_q$ with the following equivalent definition
$$d_{q-1}(\zzz)=\sum_i \gamma_i d_{q-1}'(\zzz_i)$$
for $\zzz_i=u_i\otimes w_i \in U(\ggg)\otimes \bigwedge^{q}\ggg$. Note that $\bfV_q$ now becomes free over $\calr$.
 Suppose $\zzz\in \ker d_{q-1}$. Then the assumption  $d_{q-1}(\zzz)=0$ is equivalent to the one that all $d_{q-1}'(\zzz_i)=0$ if $\gamma_i\ne 0$. By the exactness of $\{d_q'\}$ in (\ref{eq: ordinary exact}), we have all $\zzz_i\in \im d'_{q}$ whenever $\gamma_i\ne 0$. Keeping in mind again that $d_{q}$ is a Lie-Cartan module homomorphism, we finally have  $\zzz=\sum_i\gamma_i \zzz_i\in \im d_{q}$. Correspondingly, $\im d_q\supseteq \ker d_{q-1}$, and then $\im d_q=\ker d_{q-1}$. The exactness at $\bfV_q$ is proved.

Summing up, we accomplish the proof.
\end{proof}

\subsubsection{}
 Let $M\in \ulc$ and set $\bfH_q:=\Hom_{\calr\natural U(\ggg)}(\bfV_q,M)$. We consider  an $\calr$-module structure on $\bfH_q$.

\begin{lem} There is a natural $\calr$-module structure on $\bfH_q$ with unity $\calr$-action $\Omega$ defined via
$$\Omega(f)\psi(z)=\omega(f)\psi(z)$$
for any $\psi\in \bfH_q, f\in \calr, z\in \bfV_q.$
\end{lem}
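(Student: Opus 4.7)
\medskip

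\noindent\textbf{Proof proposal.} The strategy is essentially a pullback along the algebra homomorphism $\omega:\calr\rightarrow\bbf$. Because $\omega(f)\in\bbf$ for every $f\in\calr$, the formula $\Omega(f)\psi=\omega(f)\psi$ is just ordinary scalar multiplication of the homomorphism $\psi$ by the number $\omega(f)$, and so the entire verification reduces to two routine steps: (i) showing $\Omega(f)\psi$ still lies in $\bfH_q$, and (ii) checking the module axioms, both of which follow at once from the fact that $\omega$ is an algebra homomorphism.

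First I would verify (i). Given $\psi\in\bfH_q=\Hom_{\calr\natural U(\ggg)}(\bfV_q,M)$ and $f\in\calr$, the map $\Omega(f)\psi$ is clearly $\bbf$-linear. To see it remains a $\calr\natural U(\ggg)$-homomorphism, write an arbitrary $a\in\calr\natural U(\ggg)$ and $z\in\bfV_q$; then
\[
(\Omega(f)\psi)(a\cdot z)=\omega(f)\,\psi(a\cdot z)=\omega(f)\,a\cdot\psi(z)=a\cdot(\omega(f)\,\psi(z))=a\cdot(\Omega(f)\psi)(z),
\]
where the central equality uses that $\psi$ commutes with the $\calr\natural U(\ggg)$-action and the third equality uses that $\omega(f)\in\bbf$ is a central scalar commuting with every $a$.

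For step (ii), since $\omega$ is an algebra homomorphism with $\omega(1)=1$, we have
\[
\Omega(fg)\psi=\omega(fg)\psi=\omega(f)\omega(g)\psi=\Omega(f)(\Omega(g)\psi),\qquad \Omega(1)\psi=\psi,
\]
and $\bbf$-bilinearity in $(f,\psi)$ is immediate from bilinearity of $\omega$ and of scalar multiplication. This establishes the $\calr$-module structure on $\bfH_q$.

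There is no real obstacle here: the construction is the same pullback-via-$\omega$ that was already used to endow $C_\ulc^q$ and $C_\ulc^q(M)$ with their unity $\calr$-actions in Proposition \ref{lem: basic coho} and in the paragraph preceding this lemma, so one only needs to observe that the argument is formal and applies verbatim to $\bfH_q$. The single point worth a brief remark in the write-up is why $\omega(f)$ commutes with the $\calr\natural U(\ggg)$-action on $M$: this is automatic because $\omega(f)$ is a central $\bbf$-scalar, not an element of $\calr$ acting through $\theta_M$.
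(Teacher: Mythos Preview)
Your proposal is correct and follows essentially the same approach as the paper: both verify that $\Omega(f)\psi$ remains in $\bfH_q$ by using that $\omega(f)\in\bbf$ is a central scalar, and then appeal to the fact that $\omega$ is an algebra homomorphism for the module axioms. The only cosmetic difference is that the paper splits the first verification into two pieces---checking $\calr$-linearity and $\calr\natural\ggg$-linearity separately---whereas you handle all of $\calr\natural U(\ggg)$ at once with a generic element $a$; your version is marginally more streamlined but substantively identical.
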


\begin{proof} 
Let
$\theta'$ and $\rho'$  denote the $\calr$-module action  and $\calr\natural\ggg$-module action on $M$, respectively. In order to prove the lemma, we only need to check that $\Omega(f)\psi$ falls in $\bfH_q$ for $f\in \calr$ and $\psi\in\bfH_q$. It is equivalent to verify that (i) $\Omega(f)\psi$ is an $\calr$-module homomorphism; (ii) $\Omega(f)\psi$ is a $\calr\natural\ggg$-module homomorphism. For (i), we have
  \begin{align*}
(\Omega(f)\psi)(rr_1)&=\omega(f)\psi(rr_1)\cr
&=\omega(f)r\psi(r_1)\cr
&=r(\Omega(f)\psi)(r_1).
 \end{align*}
For (ii), we need to show that $\Upsilon(E)(\Omega(f)\psi)=0$ for $E\in \calr\natural\ggg$. Here $\Upsilon(E)$ denotes the Lie-action of $\calr\natural\ggg$  on $\bfH_q$ as usually defined via $\Upsilon(E)\varphi(z):=\rho'(E)(\varphi(z))-\varphi(\rho(E)z)$ where $\varphi\in \Hom_\bbf(\bfV_q,M)$,  $z=u\otimes r\otimes w\in \bfV_q$,  $\rho$ is defined as in (\ref{eq: action on Vq}). In fact,
by calculation,  we have
\begin{align*}
&\Upsilon(E)(\Omega(f)\psi)(z)\cr
=&\rho'(E)(\omega(f)\psi(z))-(\Omega(f)\psi)(\rho(E)z)\cr
=&\rho'(E)\omega(f)\psi(z)-\omega(f)\psi(\rho(E)z)\cr
=&\omega(f)(\Upsilon(E)\psi)(z)\cr
=&0.
\end{align*}
Hence $\Omega(f)\psi\in \bfH_q$. Note that $\omega$ is an algebra homomorphism. Hence $\Omega$ is a $\calr$-module action on $\bfH_q$.
\end{proof}


\subsubsection{Realization theorem of the  \texorpdfstring{$\ulc$}{}-cohomology}

Keeping the notations as above,  we are in a position to introduce one of the main results in this section.

\begin{thm}\label{thm: cohom thm} Let $M\in\ulc$.
 Then the cohomology module $H^q_{\ulc}(M)$  are the cohomology of the cochain complex $\bfC(M)$. This  means, as $\calr$-modules
$$H^q_\ulc(M)=H^q(\bfC(M)).$$
\end{thm}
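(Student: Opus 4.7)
\medskip

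\noindent\textbf{Proposed proof.} The plan is to identify $H^q_\ulc(M)=R^q\Gamma(M)$ with the classical derived functor $\Ext^q_{\calr\natural U(\ggg)}(\calr,M)$ and then compute this $\Ext$-group via the projective resolution of $\calr$ already furnished in Proposition~\ref{prop: proj resol}. Via Lemma~\ref{lem: tensor r ulc}, $\ulc$ is equivalent to the category of $\calr\natural U(\ggg)$-modules, which is the category of left modules over an associative $\bbf$-algebra and hence has enough injectives; consequently $R^q\Gamma(M)$ may equally well be computed by $H^q(\Hom_\ulc(\bfV_\bullet,M))$ where $\bfV_\bullet\to\calr\to0$ is the projective resolution constructed in \S\ref{sec: extended CE complex}. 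So the proof reduces to producing an $\calr$-linear isomorphism of cochain complexes
\begin{equation*}
\Hom_\ulc(\bfV_\bullet,M)\;\cong\;\bfC(M),
\end{equation*}
compatible with the unity $\calr$-actions on both sides.

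The key step is a tensor-hom adjunction. Since $\bfV_q=\calr\natural U(\ggg)\otimes_\calr(\calr\otimes_\bbf\bigwedge^q\ggg)$ is free as a universal Lie-Cartan module on the $\calr$-module $\calr\otimes_\bbf\bigwedge^q\ggg$, restriction of $\psi\in\Hom_\ulc(\bfV_q,M)$ to the subspace $1\otimes(\calr\otimes_\bbf\bigwedge^q\ggg)\subset\bfV_q$ yields a bijection
\begin{equation*}
\Phi_q:\Hom_\ulc(\bfV_q,M)\xrightarrow{\;\sim\;}\Hom_\calr(\calr\otimes_\bbf\textstyle\bigwedge^q\ggg,M)=C_\ulc^q(M),\quad \psi\longmapsto\psi|_{1\otimes(\calr\otimes\bigwedge^q\ggg)},
\end{equation*}
whose inverse sends $\phi$ to the unique $(\calr\natural U(\ggg))$-linear extension $\tilde\phi(u\otimes r\otimes w)=u\cdot\phi(r\otimes w)$. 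The compatibility with the unity $\calr$-action is immediate: the formulas $(\Omega(f)\psi)(z)=\omega(f)\psi(z)$ on $\Hom_\ulc(\bfV_q,M)$ and $(\Omega(f)\phi)(y)=\omega(f)\phi(y)$ on $C_\ulc^q(M)$ intertwine under $\Phi_q$.

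It then remains to check that $\Phi_\bullet$ is a morphism of cochain complexes, i.e.\ $\Phi_{q+1}(\psi\circ d_q)=\sfd_q^M(\Phi_q(\psi))$. This is a direct calculation: inserting $1\otimes(r\otimes g_1\wedge\cdots\wedge g_{q+1})$ into $\psi\circ d_q$ and using the defining formula of $d_q$ from \S\ref{sec: extended CE complex} produces exactly the two sums appearing in the definition of $\sfd_q^M$ in Proposition~\ref{lem: basic coho}(3). I expect the main (but essentially bookkeeping) obstacle to be this sign-and-action verification, in particular confirming that the second summand, which introduces the factor $urg_s$ in $d_q$, translates under $\Phi$ into the term $rg_s\cdot\phi(1\otimes g_1\wedge\cdots\wedge\hat g_s\wedge\cdots\wedge g_{q+1})$ in $\sfd_q^M\phi$; this uses precisely that $\Phi_q^{-1}(\phi)$ is $\calr\natural U(\ggg)$-linear and that $M$ carries the compatible $(\rho',\theta')$-action built into the definition of $\ulc$. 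Combining the three steps yields $H^q_\ulc(M)=H^q(\Hom_\ulc(\bfV_\bullet,M))\cong H^q(\bfC(M))$ as $\calr$-modules.
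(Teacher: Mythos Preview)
Your proposal is correct and follows essentially the same approach as the paper: both use the projective resolution $\bfV_\bullet\to\calr$ from Proposition~\ref{prop: proj resol}, identify $\Hom_{\ulc}(\bfV_q,M)$ with $C_\ulc^q(M)$ via the tensor--hom adjunction (restriction to $1\otimes(\calr\otimes\bigwedge^q\ggg)$, which is exactly the paper's map $\Psi$), and then verify by direct computation that this identification intertwines $d_q$ with $\sfd_q^M$. Your version is in fact slightly more careful than the paper's in justifying the balancing step---that $R^q\Gamma(M)$ with $\Gamma=\Hom_\ulc(\calr,-)$ may be computed via a projective resolution of $\calr$---by observing that $\ulc$, as a module category over the ring $\calr\natural U(\ggg)$, has enough injectives.
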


\begin{proof} In view of Proposition \ref{prop: proj resol}, we consider the projective resolution of $\calr$ in $\ulc$:
 $$\bfV_\bullet:= (\bfV_q, d_q)_{q=0,1,2,\ldots}{\overset\kappa\longrightarrow}\calr\longrightarrow 0.$$
 By definition, $H^q_{\ulc}(M)=\textsf{R}^q(\Gamma)(M)$ which is equal to  $\mathsf{R}^q(\Gamma'(\bfV_\bullet))$ for $\Gamma':=\Hom_{\ulc}(-,M)$. Define a map $\Psi$:
 \begin{align*}
 \Gamma'(\bfV_q)=\Hom_{\calr\natural U(\ggg)}
 (\calr\natural U(\ggg)\otimes_\calr\calr\otimes_\bbf\bigwedge\nolimits^q\ggg,M)
 &{\longrightarrow} \Hom_{\calr}(\calr\otimes_\bbf \bigwedge\nolimits^q\ggg, M)=C_{\ulc}^q(M)\cr
 \psi&\mapsto \Psi(\psi)
 \end{align*}
   via letting
 $$\Psi(\psi)(y)=\psi(1\otimes r\otimes w)$$
 for any $\psi\in\bfH_q$, and  $y=r\otimes w\in \calr\otimes_\bbf\bigwedge^q\ggg$.
 By a straightforward calculation, it is readily verified that $\Psi$ is an $\calr$-module isomorphism.

 Up to the $\calr$-isomorphism $\Psi$, we can identify both sides of the above isomorphism map.
 What remains is to prove $\Gamma'(d_q)=\sfd_q^M$. For this, take any $\phi\in C_\ulc^{q}(M)=\Hom_\calr(\calr\otimes_\bbf \bigwedge^{q}\ggg,M)$ and confirm $\Gamma'(d_q)(\phi)=\sfd_q^M(\phi)$. For any $y=r\otimes w\in \calr\otimes\bigwedge^{q+1}\ggg$ with $r\in\calr, w=g_1\wedge\cdots\wedge g_{q+1}\in\bigwedge^{q+1}\ggg$,
 we have  $\Gamma'(d_q)(\phi): y\mapsto  \Psi^{-1}(\phi)\circ d_q(1\otimes y)$. Precisely,
 \begin{align*}
&\Psi^{-1}(\phi)\circ d_q(1\otimes y)\cr
=&
\sum_{1\leq s<t\leq q+1}(-1)^{s+t}\Psi^{-1}(\phi)(1\otimes (r\otimes ([g_s,g_t]\wedge g_1\wedge\dots\wedge\hat g_s \wedge\dots \wedge\hat g_t \wedge \dots \wedge g_{q+1}))) \cr
&+\sum_{1\leq s\leq q+1}(-1)^{s+1}\Psi^{-1}(\phi)(rg_s\otimes (1\otimes (g_1\wedge\dots\wedge\hat g_s\wedge\dots\wedge g_{q+1})))\cr
=&
\sum_{1\leq s<t\leq q+1}(-1)^{s+t}\phi(r\otimes ([g_s,g_t]\wedge g_1\wedge\dots\wedge\hat g_s \wedge\dots \wedge\hat g_t \wedge \dots \wedge g_{q+1})) \cr
&+\sum_{1\leq s\leq q+1}(-1)^{s+1}rg_s\phi(1\otimes (g_1\wedge\dots\wedge\hat g_s\wedge\dots\wedge g_{q+1}))\cr
=& \sfd_q^M(\phi)(y).
  \end{align*}
Hence, $\Gamma'(d_q)=\sfd_q^M$.

We finally obtain the equality  $H^q_\ulc(M)=H^q(\bfC(M))$.	
\end{proof}

 \subsection{Extension ring of \texorpdfstring{$\calr$}{}  in the \texorpdfstring{$\ulc$}-cohomology}
 		
 \subsubsection{Extensions in the \texorpdfstring{$\ulc$-}{} cohomology}
For \texorpdfstring{$L,M\in\ulc$}{} , we may consider \texorpdfstring{$\Ext^q_\ulc(L,M)$.}{}   Let $L$ admit a projective resolution $\bfV_\bullet^L=(V^L_q, d_q)$, $q=0,1,\ldots$ in $\ulc$, this means the following long exact sequence
$$\bfV_\bullet^L {\overset{\kappa}\longrightarrow}L\longrightarrow 0$$
with  $V^L_q$ being projective objects in $\ulc$, $\kappa$ and all $d_q$ being homomorphisms in $\ulc$.  As in the usual way,
$\Ext^q_\ulc(L,M)=\textsf{R}^q(\Gamma'(L))$ for $\Gamma'=\Hom_\ulc(-,M)$ which is a left exact (contravariant) functor from $\ulc$ to $(\ggg,\calr)\hmod$. Hence $\Ext^q_\ulc(L,M)=H^q(\Gamma'(\bfV_\bullet^L))$.

In particular, take $L=\calr$.  We can determine  $\Ext^q_\ulc(\calr, M)$ as follows
\begin{align}\label{eq: ext def}
\Ext^q_\ulc(\calr,M)=H^q_\ulc(\bfC(M)).
\end{align}

\subsubsection{}						
\begin{prop}\label{prop: 5.8}  We have the following isomorphism of $\calr$-modules
$$\Ext_\ulc^\bullet(\calr,\calr)\cong H^\bullet(\ggg, \calr).$$
\end{prop}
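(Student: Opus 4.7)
The plan is to derive this proposition as a direct corollary of the realization theorem (Theorem \ref{thm: cohom thm}) together with the complex-level identification already essentially encoded in Proposition \ref{lem: basic coho}(4). First, I would apply Theorem \ref{thm: cohom thm} with $M=\calr$ and combine it with (\ref{eq: ext def}) to obtain, as $\calr$-modules,
$$\Ext^q_\ulc(\calr,\calr)\;\cong\; H^q(\bfC(\calr)),$$
where $\bfC(\calr)=(C^q_\ulc(\calr),\sfd^\calr_q)_{q\geq 0}$ with $C^q_\ulc(\calr)=\Hom_\calr(\calr\otimes_\bbf\bigwedge^q\ggg,\calr)$ and $\sfd^\calr_q$ the differential constructed in Proposition \ref{lem: basic coho}(3).

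Next I would build an explicit isomorphism of cochain complexes $\Psi_\bullet:\bfC(\calr)\longrightarrow C^\bullet(\ggg,\calr)$, where $C^\bullet(\ggg,\calr)=(\Hom_\bbf(\bigwedge^\bullet\ggg,\calr),d^{\mathrm{CE}})$ is the classical Chevalley--Eilenberg cochain complex whose cohomology is $H^\bullet(\ggg,\calr)$. Define $\Psi_q(\phi)(w):=\phi(1\otimes w)$; by $\calr$-linearity of $\phi$, this is bijective with inverse $\tilde\phi\mapsto\bigl(r\otimes w\mapsto r\,\tilde\phi(w)\bigr)$, and it is $\calr$-linear with respect to the unity action on the source and the induced action on the target. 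The verification that $\Psi_\bullet$ intertwines $\sfd^\calr_\bullet$ with $d^{\mathrm{CE}}_\bullet$ reduces to specializing the formula of Proposition \ref{lem: basic coho}(3) at $r=1$: the bracket term is evidently preserved, while the coefficient term $r g_s\phi(1\otimes\cdots)$ becomes $g_s\cdot\tilde\phi(\cdots)$, where the dot is the derivation action of $\ggg$ on $\calr$, which is precisely the $\ggg$-module structure appearing in $H^\bullet(\ggg,\calr)$. Passing to cohomology gives $H^q(\bfC(\calr))\cong H^q(\ggg,\calr)$, and combining with the previous display yields the proposition.

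There is no serious obstacle here: both ingredients are already in place, and the proof is a clean synthesis. The only point requiring a moment of care is the compatibility of the $\calr$-module structures on the two sides under $\Psi_\bullet$; this is immediate from the definition of $\Psi_q$ and of the unity action $\Omega$, since both actions on the cochain level act by scaling through $\omega$ on the coefficient side, and this compatibility descends to cohomology.
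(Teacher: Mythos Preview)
Your proposal is correct and follows essentially the same route as the paper: invoke Theorem \ref{thm: cohom thm} together with (\ref{eq: ext def}) to identify $\Ext^q_\ulc(\calr,\calr)$ with $H^q(\bfC(\calr))$, and then use the identification $\bfC(\calr)\cong C^\bullet(\ggg,\calr)$, which is exactly Proposition \ref{lem: basic coho}(4). The only difference is that you spell out the cochain-level map $\Psi_\bullet$ explicitly, whereas the paper simply cites Proposition \ref{lem: basic coho}(4).
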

\begin{proof} By Theorem \ref{thm: cohom thm} and (\ref{eq: ext def}), we have $$\Ext_\ulc^q(\calr,\calr)=H^q_\ulc(\bfC)=H^q_\ulc(\calr).$$
 By Theorem \ref{thm: cohom thm} and Proposition \ref{lem: basic coho}(4), we have $\Ext_\ulc^q(\calr,\calr)\cong H^q(\ggg,\calr)$. The desired statement is consequently proved.
\end{proof}						
		
}					

{
\subsection{A \texorpdfstring{$\ulc$}{} -cohomology theorem for \texorpdfstring{$\ggg=W(n)$}{} }

\subsubsection{} In \cite{GF} (or see  \cite[Theorems 2.2.7 and 2.2.8]{Fuks} for details), I. M. Gelfand and D. B. Fuks first proved a cohomology theorem  in the case when $\calr$ is taken place by the corresponding  power series algebra.  Here we make a brief account that the following result, as a corollary to Gelfand-Fuks' theorem, still holds in the case of $\calr$.

\begin{prop}\label{thm: G-F} For $\ggg=W(n)$, $H^\bullet(\ggg,\calr)\cong H^\bullet(\mathfrak{gl}(n))$ for the ordinary Lie algebra cohomology.
\end{prop}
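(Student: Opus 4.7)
The plan is to reduce this to Gelfand--Fuks's computation on formal vector fields via an Euler-weight stripping argument, then transfer the formal result back to the polynomial setting. First I would work with the Chevalley--Eilenberg complex $C^\bullet(\ggg,\calr) = \Hom_\bbf(\bigwedge^\bullet\ggg,\calr)$, which computes $H^\bullet(\ggg,\calr)$ by Proposition \ref{lem: basic coho}(4). The Euler derivation $E = \sum_{i=1}^n x_i\partial_i \in \ggg_0$ acts on $\ggg$ via its adjoint action (recovering the grading $[E,\ggg_i]=i\ggg_i$) and on $\calr$ by the usual weight, and these extend to a semisimple derivation $L_E$ of $C^\bullet$ with integer eigenvalues; I would decompose $C^\bullet = \bigoplus_{w\in\bbz} C^\bullet_w$ accordingly, noting that $L_E$ commutes with $\sfd$.

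Next I would apply Cartan's magic formula $L_E = \iota_E\sfd + \sfd\iota_E$: for any weight-$w$ cocycle $\phi$ with $w\ne 0$, one gets $\phi = w^{-1}\sfd(\iota_E\phi)$, so $H^\bullet(C^\bullet_w) = 0$ whenever $w\ne 0$, and $H^\bullet(\ggg,\calr) \cong H^\bullet(C^\bullet_0)$. An element of $C^\bullet_0$ is precisely a weight-preserving cochain, i.e.\ $\phi(\ggg_{i_1}\wedge\cdots\wedge\ggg_{i_q}) \subseteq \calr_{i_1+\cdots+i_q}$ with the convention that the right-hand side vanishes when the total index is negative. This is a much smaller complex than the full $C^\bullet$.

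At this point I would bridge to the formal setting. Since each $\ggg_i$ and each $\calr_j$ is finite-dimensional, and only finitely many tuples $(i_1,\dots,i_q)$ contribute to a fixed total weight, each $\phi \in C^q_0$ is determined by finitely much data between finite-dimensional graded pieces. Consequently $\phi$ extends uniquely to a continuous cochain on $W_n = \Der(\widehat{\calr})$, $\widehat{\calr}=\bbf[[x_1,\dots,x_n]]$, with values in $\widehat{\calr}$. Running the same Euler argument in the continuous setting, $H^\bullet_{\mathrm{cts}}(W_n, \widehat{\calr})$ is also computed by its weight-zero subcomplex $C^\bullet_{0,\mathrm{cts}}$, and the extension above is a canonical isomorphism $C^\bullet_0 \cong C^\bullet_{0,\mathrm{cts}}$ of cochain complexes, giving $H^\bullet(\ggg,\calr) \cong H^\bullet_{\mathrm{cts}}(W_n,\widehat{\calr})$.

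Finally I would invoke the Gelfand--Fuks theorem as formulated in \cite{GF} and \cite[Theorems 2.2.7--2.2.8]{Fuks}, which states $H^\bullet_{\mathrm{cts}}(W_n, \widehat{\calr}) \cong H^\bullet(\mathfrak{gl}(n))$, and compose the isomorphisms to conclude. The main obstacle will be the matching in the third paragraph: one has to verify carefully that the extension map from weight-preserving polynomial cochains to continuous formal cochains is a bijection intertwining the two differentials, and to pin down precisely the version of the Gelfand--Fuks theorem with $\widehat{\calr}$-coefficients (rather than the more commonly cited trivial-coefficient version) that is being invoked. The rest is bookkeeping.
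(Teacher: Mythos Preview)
Your approach is correct but takes a different route from the paper's. The paper exploits that $\calr\cong\nabla(0)$ is coinduced from the trivial $\ggg_{\geq 0}$-module, so a Shapiro-type result (\cite[Theorem 1.5.4]{Fuks}, \cite[Theorem 6.9]{Kn}) gives $H^\bullet(\ggg,\calr)\cong H^\bullet(\ggg_{\geq 0},\bbf)$ in one step; the Hochschild--Serre spectral sequence for the ideal $\ggg_{>0}\triangleleft\ggg_{\geq 0}$ with quotient $\mathfrak{gl}(n)$ is then run directly in the polynomial setting, with degeneration argued as in Fuks's proof of the formal case. You instead stay inside the full Chevalley--Eilenberg complex, strip to Euler-weight zero via the Cartan homotopy, and identify that weight-zero subcomplex with its formal counterpart so that Gelfand--Fuks can be quoted as a black box. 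Both routes ultimately rest on the same underlying computation; the paper's Shapiro reduction is shorter and avoids the polynomial-to-formal bridge entirely, while your approach is more explicit at the cochain level and makes the comparison with the formal theory transparent. One minor correction: your decomposition $C^\bullet=\bigoplus_{w}C^\bullet_w$ should really be a product, since a general cochain on the infinite-dimensional $\ggg$ need not have finite weight support---the homotopy argument still goes through, because for a cocycle $\phi$ the sum $\psi=\sum_{w\neq 0}w^{-1}\iota_E\phi_w$ is componentwise finite and hence a well-defined cochain with $\sfd\psi=\phi-\phi_0$.
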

\begin{proof} Recall that $\nabla(\lambda)=\Hom_{U(\ggg_{\geq0})}(U(
		\ggg), L^0(\lambda))$ for $\lambda\in\Lambda^+$. By a classical result of Lie algebra cohomology (see for example \cite[Theorem 1.5.4]{Fuks}, \cite[Theorem 6.9]{Kn}), we have $H^q(\ggg,\nabla(\lambda))=H^q(\ggg_{\geq0}, L^0(\lambda))$.

 Note that $\mathfrak{gl}(n)\cong \ggg_0\hookrightarrow \ggg_{\geq0}$. Set $\ggg^+:=\ggg_{\geq0}$ and $\ggg^{\sharp}:=\ggg_{>0}$. Then $\ggg^\sharp$  is an ideal of $\ggg^+$ with $\ggg_0=\ggg^+\slash \ggg^\sharp$.
By Serre-Hochschild spectral sequence theorem (see \cite[Theorem 1.5.1]{Fuks} or \cite[\S7.5]{Wei} for details), there exists a spectral sequence
$$\{E^{p,q}_r, d_r^{p,q}: E_r^{p,q}\longrightarrow E_r^{p+r,q-r+1}\}$$
with $E_1^{p,q}=H^q(\ggg^+, \Hom(\bigwedge^p(\ggg^+\slash\ggg^\sharp),L^0(\lambda))$, and
 \begin{align*}
 E_2^{p,q}=&H^p(\ggg^+\slash \ggg^\sharp, H^q(\ggg^\sharp, L^0(\lambda))
 \Longrightarrow H^{p+q}(\ggg^+, L^{0}(\lambda)).
\end{align*}
Note that $L^0(\lambda)$ is regarded a trivial $\ggg^\sharp$-module, and $\ggg^\sharp$ itself is a $\ggg_0$-module.  We further have
 \begin{align*}
 E_2^{p,q}=&H^p(\mathfrak{gl}(n), H^q(\ggg^\sharp, L^0(\lambda))
  \cr
 =&H^p(\mathfrak{gl}(n))\otimes (H^q(\ggg^\sharp)\otimes L^0(\lambda))^{\gl(n)}.
\end{align*}
 When $\lambda$ is zero weight, equivalently to say,  $L^0(\lambda)=\bbf$ it can be shown that   $E_\infty=E_2$ in the same way as in the situation when $\calr$ is taken place of the corresponding divided power algebra (see the proof of Theorem 2.2.8 of \cite{Fuks}). Furthermore,  $E_1^{p,q}=0$ when $p\ne 0$.
Hence $E^{p,q}_\infty=E_1^{p,q}$ and
\begin{align*}
H^m(\ggg^+, \bbf)=E_1^{0,m}=
H^m(\mathfrak{gl}(n), \bbf).
\end{align*}
So we have $H^\bullet(\ggg,\calr)=H^\bullet(\ggg^+,\calr)  =  H^\bullet(\mathfrak{gl}(n))$ when $\lambda=0$.
\end{proof}
}
\subsubsection{}
\begin{thm}\label{thm: third main thm} Let $\ggg=W(n)$. Then the extension ring $$\Ext_\ulc^\bullet(\calr,\calr)\cong H^\bullet(\mathfrak{gl}(n,\bbf)).$$
\end{thm}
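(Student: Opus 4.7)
The strategy is to combine the two propositions immediately preceding the statement: Proposition \ref{prop: 5.8} yields an $\calr$-module isomorphism $\Ext_\ulc^\bullet(\calr,\calr)\cong H^\bullet(\ggg,\calr)$, and Proposition \ref{thm: G-F} yields $H^\bullet(\ggg,\calr)\cong H^\bullet(\mathfrak{gl}(n))$ when $\ggg = W(n)$. Composing these two already gives the required identification of graded vector spaces, so the only new content is to promote this identification to a ring isomorphism.

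First I would fix the multiplicative structures on both sides. On the left, $\Ext_\ulc^\bullet(\calr,\calr)$ carries the Yoneda composition product. On the right, $H^\bullet(\mathfrak{gl}(n))$ carries the standard cup product arising from the multiplication on $\bbf$. Via Theorem \ref{thm: cohom thm}, I would realize $\Ext_\ulc^\bullet(\calr,\calr)$ as the cohomology of the extended Chevalley--Eilenberg complex $\bfC(\calr) = \Hom_\calr(\calr\otimes_\bbf\bigwedge^\bullet\ggg,\calr)$, and equip $\bfC(\calr)$ with its obvious cochain-level cup product (using the algebra multiplication of $\calr$ on the coefficients). Then the first step is the standard comparison lemma: the cochain cup product on $\bfC(\calr)$ realizes the Yoneda product on $\Ext_\ulc^\bullet(\calr,\calr)$. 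This is proved by a diagram chase using the explicit projective resolution $\bfV_\bullet$ constructed in Proposition \ref{prop: proj resol} together with a chain-level diagonal approximation $\bfV_\bullet \to \bfV_\bullet \otimes_{\calr\natural U(\ggg)} \bfV_\bullet$ lifting the identity on $\calr$.

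Next I would track the product through the chain of isomorphisms appearing inside the proof of Proposition \ref{thm: G-F}. The Shapiro-type identification $H^\bullet(\ggg,\calr) = H^\bullet(\ggg,\nabla(0)) \cong H^\bullet(\ggg_{\geq 0},\bbf)$ is multiplicative, as are the Serre--Hochschild spectral sequence associated to the ideal $\ggg^\sharp = \ggg_{>0} \triangleleft \ggg_{\geq 0}$ and the edge homomorphisms at $E_\infty$. In the collapsing case $\lambda = 0$ treated in Proposition \ref{thm: G-F}, the spectral sequence concentrates on the column $E_2^{p,0} = H^p(\mathfrak{gl}(n),\bbf)$, so the edge map $H^\bullet(\ggg_{\geq 0},\bbf) \twoheadrightarrow E_\infty^{\bullet,0} = H^\bullet(\mathfrak{gl}(n))$ is an isomorphism of graded rings. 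Composing all these multiplicative isomorphisms yields the desired ring isomorphism $\Ext_\ulc^\bullet(\calr,\calr) \cong H^\bullet(\mathfrak{gl}(n,\bbf))$.

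The main obstacle is the first step: carefully matching the Yoneda product on $\Ext_\ulc^\bullet$ with an explicit cochain-level cup product on $\bfC(\calr)$. Once a diagonal approximation $\bfV_\bullet \to \bfV_\bullet \otimes \bfV_\bullet$ is written down (the standard Alexander--Whitney-style formula on $\calr\natural U(\ggg) \otimes_\calr(\calr \otimes \bigwedge^\bullet \ggg)$ adapted from the ordinary Chevalley--Eilenberg resolution of $\bbf$ over $U(\ggg)$), the remaining verifications of multiplicativity of Shapiro's isomorphism and of the Serre--Hochschild spectral sequence are standard \cite[\S7]{Wei} and proceed essentially as in the ordinary Lie algebra cohomology setting.
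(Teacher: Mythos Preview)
Your approach is essentially the paper's: combine Proposition~\ref{prop: 5.8} with Proposition~\ref{thm: G-F}. The paper's own proof is a single sentence citing those two results and does not separately verify the multiplicative compatibility; your additional discussion of the Yoneda product, a diagonal approximation on $\bfV_\bullet$, and the multiplicativity of the Shapiro and Serre--Hochschild identifications goes beyond what the paper supplies, but is exactly what is needed to substantiate the ring-level statement.
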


\begin{proof}
This theorem follows from Proposition \ref{prop: 5.8} and Proposition \ref{thm: G-F}.
\end{proof}

\begin{rem} In comparison of cohomology computations, the extension $\Ext_\ulc^q(\calr,\calr)$ in the $\ulc$-cohomology is completely different from the one $\Ext^q_{U(\ggg)}(\calr,\calr)$ in the ordinary cohomology of Lie algebras.
\end{rem}

\subsubsection{} We turn back to Question \ref{ques: ext}(1) with respect to $\ulc$ for $\ggg=W(n)$,  giving the following  answer. 		

	\begin{cor}\label{cor: ext ring results} The extension ring $\uE:=\Ext_\ulc^\bullet(\calr,\calr)$ is finite dimensional.
 \end{cor}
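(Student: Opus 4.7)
The plan is to reduce the statement immediately to the preceding Theorem \ref{thm: third main thm}, which exhibits a ring isomorphism $\uE \cong H^\bullet(\mathfrak{gl}(n,\bbf))$. Once this identification is in hand, the question becomes purely one about ordinary Lie algebra cohomology of a finite-dimensional Lie algebra with trivial coefficients, and the finite-dimensionality follows from standard considerations that I now outline.

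First I would recall that, since $\mathfrak{gl}(n,\bbf)$ is a finite-dimensional Lie algebra and $\bbf$ is the trivial module, the Chevalley-Eilenberg cochain complex computing $H^\bullet(\mathfrak{gl}(n,\bbf))$ is $\bigwedge^\bullet \mathfrak{gl}(n,\bbf)^*$, which is a finite-dimensional graded vector space of total dimension $2^{n^2}$, concentrated in degrees $0,1,\ldots, n^2$. Hence each $H^q(\mathfrak{gl}(n,\bbf))$ is a subquotient of a finite-dimensional space and vanishes for $q > n^2$, so $H^\bullet(\mathfrak{gl}(n,\bbf))$ is finite-dimensional as an $\bbf$-algebra.

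Combining the two steps, I would conclude:
\[
\dim_\bbf \uE \;=\; \dim_\bbf H^\bullet(\mathfrak{gl}(n,\bbf)) \;<\; \infty.
\]
If one wishes to be more explicit (and this is the cleanest sharpening), one can invoke the classical description of the cohomology of a reductive Lie algebra as the invariants of the exterior algebra on the dual, which yields $H^\bullet(\mathfrak{gl}(n,\bbf)) \cong \bigwedge(y_1, y_3, \ldots, y_{2n-1})$, an exterior algebra on $n$ primitive generators in odd degrees $1,3,\ldots, 2n-1$, with total dimension $2^n$; this makes $\uE$ of dimension exactly $2^n$.

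There is no genuine obstacle here: the corollary is essentially a packaging of Theorem \ref{thm: third main thm} with the elementary observation that Chevalley-Eilenberg cohomology of a finite-dimensional Lie algebra with trivial coefficients is computed from a finite-dimensional complex. The only point to be a little careful about is the ring structure in Theorem \ref{thm: third main thm}: one needs the cup product on $\uE$ (coming from the $\ulc$-cochain complex $\bfC$) to correspond to the wedge product on the Chevalley-Eilenberg side, which is already tacitly used in the statement of Theorem \ref{thm: third main thm} and follows from the chain-level isomorphism $\Psi$ established in the proof of Theorem \ref{thm: cohom thm}.
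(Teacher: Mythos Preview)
Your proposal is correct and follows essentially the same route as the paper: invoke Theorem \ref{thm: third main thm} to identify $\uE$ with $H^\bullet(\mathfrak{gl}(n,\bbf))$, then use the finite-dimensionality of the latter (the paper simply cites \cite[Theorem 2.1.1]{Fuks}, while you spell out the elementary Chevalley-Eilenberg argument and add the explicit description as an exterior algebra of dimension $2^n$). The remarks on the ring structure are superfluous here since only finite-dimensionality is claimed, but they are not wrong.
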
	
 \begin{proof} It is a direct consequence of Theorem \ref{thm: third main thm} and the finite dimensional property of $H^\bullet(\mathfrak{gl}(n,\bbf))$ (see for example, \cite[Theorem 2.1.1]{Fuks}).
 \end{proof}

\subsection*{Acknowledgements}
This work is partially supported by the National Natural Science Foundation of China (Grant Nos. 12071136 and 12271345), the Science and Technology Commission of Shanghai Municipality (No. 22DZ2229014), the Science and Technology Commission of Shanghai Municipality-Shanghai Local University Capacity Building Project (No. 23010502100), the Hebei Natural Science Foundation of China (No. A2021205034) and the Special Project on Science and Technology Research and Development Platforms, Hebei Province (No. 22567610H).

\end{document}